\newtheorem{thm}{Theorem}[section]
\newtheorem{cor}{Corollary}
\newtheorem{lem}{Lemma}[section]
\newtheorem{prop}{Proposition}[section]
\newtheorem{defn}{Definition}[section]
\newtheorem{rem}{Remark}[section]
\newtheorem{Example}{Example}
\newenvironment{proof}[1][Proof]{\textbf{#1.} }{\ \rule{0.5em}{0.5em}}
\newcommand\tgo{_{t \geq 0}}
\def\E{\mathbb{E}}
\def\F{\mathcal{F}}
\def\P{\mathbb{P}}
\begin{document}
\title{On Semimartingale local time inequalities and Applications in SDE's }
\date{15 March 2012}
\maketitle

\author{\begin{center} {M. Benabdallah, S. Bouhadou, Y. OUKNINE }\footnote{Corresponding authors \\Email address:{\bf bmohsine@gmail.com}\\
Department of mathematics, Ibn Tofail University, Kenitra, B.P. 133, Morocco\\
 Email addresses: {\bf ouknine@ucam.ac.ma} (Y. Ouknine), {\bf sihambouhadou@gmail.com} \\ LIBMA Laboratory, Department of Mathematics, Faculty of Sciences Semlalia, Cadi Ayyad University, P.B.O. 2390 Marrakesh, Morocco.\\}$^{,}$ \footnote{This work is supported by Hassan II Academy of Sciences and Technology.\\
Email address:}
\end{center}}

\bigskip
\begin{spacing}{1.1}
\begin{abstract}
Using the balayage formula, we prove an inequality between the measures associated to local times of semimartingales. Our result extends the "comparison theorem of local times" of Ouknine $(1988)$, which is useful in the study of stochastic differential equations. The inequality presented in this paper covers the discontinuous case.
Moreover, we study the pathwise uniqueness of some stochastic differential equations involving local time of unknown process.
\end{abstract}

\bigskip

\textbf{Keywords:}
Semimartingales, local times, stochastic differential equation, balayage formula, Skorokhod problem.
\\
\textbf{AMS classification:} 60H10, 60J60

\newpage

\tableofcontents
%==================================fORMULATION OF THE PROBLEM=========================

\section{Introduction}
%In the study of stochastic equations it is common to distinguish weak uniqueness and the pathwise uniqueness of solutions.
In contrast of ordinary differential equation, the theory of stochastic differential equations has two distinguished  notions of solutions and two uniqueness properties: Pathwise uniqueness (P.U) and uniqueness in law (U.L).
Roughly, the pathwise uniqueness asserts that two solutions on the same probability space with the same stochastic inputs agree almost surely while weak uniqueness asserts that two solutions agree in distributions (Precise definitions will be given later).\\
On his seminal work, S. Nakao \cite{Nakao}  has proved a pathwise uniqueness property for SDE with non regular datas. To do this he used a key lemma on martingale and bounded variation processes.
(See an elegant proof of the result in N. V. Krylov and A. K. Zvonkin \cite{zvonkin}).\\
In many references studying the existence and uniqueness of strong solutions of the Itô equations, the common idea is the construction of weak solutions together with the subsequent use of the celebrated pathwise uniqueness argument, obtained by Yamada and Watanabe \cite{yamada} which can be formulated as:
$$\mbox{weak existence}+\mbox{pathwise uniqueness}\Rightarrow \mbox{strong uniqueness}.$$
For this reason, many investigations were devoted to the problem of (P.U) of solutions of SDE's. \\
In this paper we are interested by the stochastic differential equations of the form:
\begin{eqnarray}\label{eqna}
X_{t}&=&X_{0}+ \int_{0}^{t}\sigma_{s}(X_{.},B_{.})\;dB_{s}+\int_{\mathbb{R}}L_{t}^{a}(X)\; \nu(da)
\end{eqnarray}
where the mapping function $\sigma:\mathbb{R}^{+}\times C(\mathbb{R}^{+},\mathbb{R})\times C(\mathbb{R}^{+},\mathbb{R})\rightarrow \mathbb{R}$ is measurable and adapted to the filtration $(\mathcal{C}_{t})_{t \geq 0}$, $L_{t}^{a}(X)$ stands for local time of a continuous semimartingale $X$ at $a$, $\nu $ is $\sigma$-finite measure on $\mathbb{R}$. This equations was studied by H. J. Engelbert \cite{ENgelbert-Yamada}, the particular case where $(\sigma_{t})_{t \geq 0}$ is given by a borel function $\sigma$ defining
$$\sigma_{t}(x,w)=\sigma(x_{t})\;\;\; \mbox{for all} \; (x,w) \in C(\mathbb{R}^{+},\mathbb{R}^{2})$$
was considered by several authors, c.f., for example, Le Gall \cite{Legall}, H. J. Engelbert and W. Schmidt \cite{Schmidt} and recently by R. Belfadli and Y. Ouknine \cite{belfa}.
Using the local time technique and the inequality between local times of continuous semimartingales given by Y. Ouknine \cite{Ouk Nakao}, we prove the pathwise uniqueness for the SDE (\ref{eqna}). In fact, this equality which was extended to discontinuous case by F. Coquet and Y. Ouknine \cite{coquet}  is very useful in the study of SDE, several works was done in litterature, e.g. Y. Ouknine \cite{Ouk Nakao}, Belfadli and Ouknine \cite{belfa}, Rutkowski \cite{M.Rutkowski}.
So, in the first part of this paper, we prove a general result for comparison theorem for local times. Roughly speaking, if $X$ and $Y$ are two semimartingales sharing the same set of zero, and if $X^{+}\leq Y^{+}$, then the measure $dL^{0}(X)$ is absolutely continuous with respect to the measure $dL^{0}(Y)$. \\Our proof is based on the use of balayage formula of Azéma and Yor \cite{Yor}, which is the key of the first order calculus. And which proves an efficient tool to obtain the comparison theorem for local times in its strong form.\\

The structure of this paper is as follows:\\
\underline{Section}(\ref{continuous cas}): We start it with the continuous version of the balayage formula and show
how to deduce from it our new comparison theorem for local times of continuous semimartingales.\\
\underline{Section} (\ref{cad case}):  Illustrate our main result
for the càdlàg semimartingales. \\
\underline{Section} (\ref{Application to SDE}) : It contains three subsections concentrating to
on the study of the pathwise uniqueness of some classes of SDE's.\\
\underline{Subsection} (\ref{SDE local}): We give necessary and sufficient condition for pathwise uniqueness of solutions for SDE's with local time component:
\begin{eqnarray}\label{Local}
dY_{t}&=&\sigma(Y_{t})\;dB_{t}+b(Y_{t})\;dt+\frac{1}{2}dL_{t}^{0}(Y),\;\;\;\;\; Y \geq 0
\end{eqnarray}
where $B$ is Brownian motion, $\sigma$ and $b$ are borel functions, $\forall a \in \mathbb{R}$, $L_{t}^{a}(Y)$ is the local time of $Y$ at $a$. In the same subsection, we give also some explicit sufficient conditions which ensure the pathwise uniqueness for the SDE (\ref{Local}).\\
\underline{Subsection }(\ref{SDE with local time}):We establish both the pathwise uniqueness and uniqueness in law for the SDE (\ref{eqna}).\\
\underline{Subsection} (\ref{On One SDE with non-sticky boundary conditions}): Is dedicated to the pathwise uniqueness of SDE with a non sticky boundary condition. This result extends an early work of S. Manabe and T. Shiga \cite{Shiga}.\\
\underline{Section} (\ref{Tanaka}): Prokaj in \cite{prokaj} has showed a recent result on the pathwise uniqueness of the so-called perturbed Tanaka equation:
\begin{eqnarray}\label{tanaka}
Y_t= y+\int_0^t sign(Y_s)dM_t+ \lambda N_t
\end{eqnarray}
In section $5$, we use the local time technics introduced by Perkins \cite{perkins} and further developed by LeGall \cite{legall2}, to provide a simple proof of a more general result of this type.\\
%%%%%%%%%%%%%%%%%%%%%%%%%%%%%%%%%%%%%%%%
Subsection \ref{Open}: We suggest some open problems .
%The mainly property that we use is the so called local time condition $(LT)$. As an application of our result we extend
%this condition to a weaker version.
\subsection{Preliminaries}
Throughout this paper we apply the usual conventions and currently standard notation of stochastic calculus associated with Itô integral.
In particular, we denote by $(\Omega, \mathcal{F}, (\mathcal{F}_{t})_{t \geq 0}; \mathbb{P})$ a filtered probability space, satisfying the usual conditions.\\
The notion of local time was introduced by P. Lévy for measuring the time spent by a diffusion process in the vicinity of a point. Following Ouknine $(1991)$, we can distinguish three local times at $a$ associated with the continuous semimartingale started at $x$:
\begin{itemize}
  \item the right local time at $a$ denoted $L_{t}^{a+}(X)$:
  $$\frac{1}{2}L_{t}^{a+}(X)=(X_{t}-a)^{+}-(x-a)^{+}-\int_{0}^{t}1_{\{X_{s}>a\}}\;dX_{s}$$
  \item the left local time at $a$ denoted $L_{t}^{a-}(X)$:
  $$\frac{1}{2}L_{t}^{a-}(X)=(X_{t}-a)^{-}-(x-a)^{-}-\int_{0}^{t}1_{\{X_{s}<a\}}\;dX_{s}$$
  \item the symmetric local time at $a$ denoted $L_{t}^{a}(X)$:
  $$L_{t}^{a}(X)=(L_{t}^{a+}(X)+L_{t}^{a-}(X))/2$$
\end{itemize}
\begin{prop}[Occupation times formula]
For any bounde measurable function $f$ and for all $t \geq 0$.
$$\int_{0}^{t}f(X_{s})\;d\langle X\rangle_{s}= \int_{\mathbb{R}}f(x)L_{t}^{x}(X)\;dx. $$
\end{prop}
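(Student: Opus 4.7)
The plan is to establish the formula first for indicator functions of half-open intervals, then extend by linearity to simple functions, and finally by a monotone class (or dominated convergence) argument to all bounded Borel $f$. Only the continuous semimartingale case is at stake here, so I can start directly from the Tanaka-type definition of $L_t^{a+}(X)$ given just above the statement.

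Fix $a<b$ and take $f=\mathbf{1}_{(a,b]}$. Integrating the defining identity
$$\tfrac{1}{2}L_t^{x+}(X)=(X_t-x)^+-(X_0-x)^+-\int_0^t \mathbf{1}_{\{X_s>x\}}\,dX_s$$
over $x\in[a,b]$ (in the Lebesgue sense pathwise, and via stochastic Fubini for the dXs-integral) gives
$$\tfrac{1}{2}\int_a^b L_t^{x+}(X)\,dx=G(X_t)-G(X_0)-\int_0^t g(X_s)\,dX_s,$$
where $g(y):=\int_a^b \mathbf{1}_{\{y>x\}}\,dx=(y-a)^+-(y-b)^+$ and $G(y):=\int_a^b(y-x)^+\,dx=\tfrac12[(y-a)^+]^2-\tfrac12[(y-b)^+]^2$. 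Now $G$ is $C^1$ with $G'=g$ Lipschitz and $G''=\mathbf{1}_{(a,b]}$ in the distributional sense, so $G$ is the difference of two convex functions and the Meyer--Itô--Tanaka formula applies:
$$G(X_t)-G(X_0)=\int_0^t g(X_s)\,dX_s+\tfrac12\int_0^t \mathbf{1}_{(a,b]}(X_s)\,d\langle X\rangle_s.$$
Subtracting yields $\int_a^b L_t^{x+}(X)\,dx=\int_0^t \mathbf{1}_{(a,b]}(X_s)\,d\langle X\rangle_s$. The identical argument with $(X_t-x)^-$ treats $L^{x-}$, and averaging gives the symmetric version used in the statement.

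Extension then follows standardly: for fixed $\omega$ and $t$, both sides of the claimed identity are bounded linear functionals on $f$, with the left-hand side a positive measure on $\mathbb R$ supported in the (compact) range of $X_{[0,t]}$ and the right-hand side a positive Radon measure since $x\mapsto L_t^x(X)$ is integrable. Agreement on half-open intervals, combined with the $\pi$--$\lambda$ / monotone class theorem, forces agreement on all bounded Borel $f$. The only real technical obstacle is the stochastic Fubini step used to bring the Lebesgue $dx$ inside the $dX_s$ integral; this is justified because $(s,x,\omega)\mapsto \mathbf{1}_{\{X_s(\omega)>x\}}$ is jointly measurable and uniformly bounded, so the integrability hypotheses of the stochastic Fubini theorem for semimartingales (applied on the compact interval $[a,b]$) are trivially satisfied. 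Everything else in the argument is pathwise.
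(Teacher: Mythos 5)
The paper itself offers no proof of this proposition: it is recalled as standard background in the Preliminaries, with the reader referred to Protter's book. So there is no in-paper argument to compare against, and your proposal has to stand on its own. The overall architecture you chose (integrate Tanaka's formula in the space variable, compare with an It\^o-type expansion of the resulting $C^1$ function, then extend by monotone class) is exactly the classical route, e.g.\ Revuz--Yor, Ch.~VI, Cor.~1.6, and the Fubini step and the monotone-class step are handled correctly.

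There is, however, one genuine flaw at the pivotal step. With $f=\mathbf{1}_{(a,b]}$ your function $G$ satisfies $G''=\mathbf{1}_{(a,b]}$ only in the distributional sense; $G$ is $C^1$ with Lipschitz derivative but \emph{not} $C^2$. The Meyer--It\^o--Tanaka formula you invoke for such a $G$ reads
$$G(X_t)-G(X_0)=\int_0^t G'_-(X_s)\,dX_s+\tfrac12\int_{\mathbb R}L_t^x\,G''(dx)
=\int_0^t g(X_s)\,dX_s+\tfrac12\int_a^b L_t^x\,dx,$$
i.e.\ its correction term is already expressed through the local time, so comparing it with your Fubini identity yields only the tautology $\int_a^b L_t^{x+}\,dx=\int_a^b L_t^x\,dx$. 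The version you actually wrote down, with correction term $\tfrac12\int_0^t \mathbf{1}_{(a,b]}(X_s)\,d\langle X\rangle_s$, is the classical It\^o formula, which requires $G\in C^2$; for merely $W^{2,\infty}$ functions the equality of $\int_0^t G''(X_s)\,d\langle X\rangle_s$ with $\int L_t^x G''(x)\,dx$ is precisely the occupation times formula you are trying to prove, so the argument as stated is circular. The repair is standard and small: run the same computation with $f$ \emph{continuous} and compactly supported, so that $F(y)=\int_{\mathbb R}f(x)(y-x)^+\,dx$ is genuinely $C^2$ with $F''=f$ and the classical It\^o formula applies; this gives $\int_{\mathbb R}f(x)L_t^{x}\,dx=\int_0^t f(X_s)\,d\langle X\rangle_s$ for all such $f$, and your final monotone-class/Radon-measure argument then extends the identity to all bounded Borel $f$ exactly as you describe.
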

\begin{prop}
For any $a$, the measure $dL^{a}(X)$ is a.s. carried by the set $\{t \geq 0: \; X_{t}=a\}$.
\end{prop}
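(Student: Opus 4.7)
My approach is a classical two-way Itô computation: expand $(X_t-a)^2$ via Itô's formula in two different ways and compare; this will produce an identity forcing $|X_s-a|$ to vanish $dL^{a}(X)$-almost everywhere, which is exactly what has to be shown.

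First, adding the two identities for $L_t^{a+}(X)$ and $L_t^{a-}(X)$ given in the preliminaries yields the classical Tanaka formula for the absolute value,
$$|X_t-a| \;=\; |X_0-a| + \int_0^t \mathrm{sgn}(X_s-a)\,dX_s + L_t^{a}(X),$$
where $\mathrm{sgn}(y)\in\{-1,+1\}$ (the value at $0$ is immaterial in the stochastic integral since it changes the integrand on a $d\langle X\rangle$-negligible set). In particular $Y_t:=|X_t-a|$ is itself a continuous semimartingale, with bracket $\langle Y\rangle_t=\langle X\rangle_t$ since $\mathrm{sgn}^2\equiv 1$ and the local time part has finite variation.

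Next, Itô's formula for the $C^2$ function $x\mapsto(x-a)^2$ gives
$$(X_t-a)^2 \;=\; (X_0-a)^2 + 2\int_0^t (X_s-a)\,dX_s + \langle X\rangle_t.$$
On the other hand, applying Itô to $Y_t^2$ using the Tanaka decomposition above and exploiting the pointwise identity $|y|\,\mathrm{sgn}(y)=y$, one obtains
$$Y_t^2 \;=\; Y_0^2 + 2\int_0^t (X_s-a)\,dX_s + 2\int_0^t |X_s-a|\,dL_s^{a}(X) + \langle X\rangle_t.$$

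Since $Y_t^2 = (X_t-a)^2$ and $Y_0=|X_0-a|$, subtracting the two expressions cancels everything except the local time term, yielding $\int_0^t |X_s-a|\,dL_s^{a}(X)=0$ almost surely for every $t\geq 0$. As the integrand is nonnegative and $dL^{a}(X)$ is a positive random measure, this forces $|X_s-a|=0$ for $dL^{a}(X)$-almost every $s$, i.e.\ $dL^{a}(X)$ is carried by $\{t\geq 0 : X_t=a\}$. The only mildly delicate point is the justification of $\langle Y\rangle_t=\langle X\rangle_t$, which rests on the observation that $L^{a}(X)$ being continuous of finite variation contributes no covariation; all other manipulations are routine.
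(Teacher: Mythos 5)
Your argument is correct. Note first that the paper itself offers no proof of this proposition: it is stated in the preliminaries as a standard property of local times, with a pointer to Protter's book, so there is nothing to compare against line by line. What you give is essentially the classical proof of the support property: compute $(X_t-a)^2$ once by It\^o applied to $X$ and once by It\^o applied to $|X-a|$ via Tanaka, cancel the common terms, and read off $\int_0^t |X_s-a|\,dL_s^{a}(X)=0$. All the steps check out, including $\langle\, |X-a|\,\rangle=\langle X\rangle$ (the local time contributes no bracket, and $\mathrm{sgn}^2(X_s-a)=1$ off $\{X_s=a\}$, a $d\langle X\rangle$-null set by the occupation times formula). One small imprecision worth flagging: the value of $\mathrm{sgn}$ at $0$ is \emph{not} entirely immaterial in $\int_0^t \mathrm{sgn}(X_s-a)\,dX_s$, because $dX$ has a finite-variation part and changing the integrand on $\{X_s=a\}$ shifts the integral by $\pm 2\int_0^t 1_{\{X_s=a\}}\,dX_s$, which is exactly the discrepancy between the right, left and symmetric local times (cf.\ the third proposition of the preliminaries). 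This does not damage your proof — whichever convention you fix, the identity $|y|\,\mathrm{sgn}(y)=y$ still holds and the argument delivers $\int_0^t|X_s-a|\,dL_s^{a,\mathrm{conv}}(X)=0$ for the corresponding local time; taking $\mathrm{sgn}(0)=0$ gives the statement directly for the symmetric local time used in the paper.
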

\begin{prop}
There exists a modification of the random field \\$(L_{t}^{a}(X)\;; a \in \mathbb{R}, \; t \geq 0)$ such that the map $(a,t)\longrightarrow L_{t}^{a}(X)$ is a.s. continuous in $t$ and càdlàg in $a$. Moreover,
$$L_{t}^{a}(X)-L_{t}^{a-}(X)= 2 \int_{0}^{t} 1_{\{X_{s}=0\}}\;dX_{s}, \;\;\; t \geq0.$$
\end{prop}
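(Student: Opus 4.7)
The plan is to read the regularity and the jump formula off the Tanaka--Meyer identity that combines the two bullets above: for each fixed $a\in\mathbb{R}$,
$$L_t^a(X)\;=\;|X_t-a|-|X_0-a|-\int_0^t\mathrm{sgn}(X_{s-}-a)\,dX_s+(\text{pure-jump correction}),$$
where the last term is a pathwise absolutely convergent sum over the jumps of $X$. Since $a\mapsto|X_t-a|-|X_0-a|$ is globally $1$-Lipschitz in $a$ and the pure-jump correction is itself càdlàg in $a$ (a countable sum of maps each of which jumps in $a$ only at the values of $X_{s-}$ and $X_s$), the whole regularity question reduces to the analysis of the random field
$$M_t(a):=\int_0^t\mathrm{sgn}(X_{s-}-a)\,dX_s.$$

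For the regularity I would apply Kolmogorov's continuity lemma to $M_t(\cdot)$. For $a<b$ the integrand satisfies $\mathrm{sgn}(X_{s-}-a)-\mathrm{sgn}(X_{s-}-b)=2\cdot 1_{\{a<X_{s-}<b\}}$ off a negligible set for the integrator, so the Burkholder--Davis--Gundy inequality combined with the occupation-times formula applied to the continuous martingale part of $X$ yields moment bounds of the form $\mathbb{E}|M_t(a)-M_t(b)|^p\le C_p|b-a|^{p/2}$ for $p\ge 2$. Kolmogorov's criterion then gives a modification that is continuous in $a$ coming from the continuous-martingale part of $X$; the finite-variation and pure-jump contributions to $X$ add a piece that is manifestly càdlàg in $a$, so this is the mechanism producing càdlàg (rather than continuous) regularity in $a$. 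Continuity in $t$ is immediate from continuity of stochastic integrals in their upper limit, together with continuity of $t\mapsto|X_t-a|$ between jumps and the matching of jump contributions.

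For the jump formula I would pass to the right-continuous version $\widetilde{\mathrm{sgn}}(u):=1_{\{u>0\}}-1_{\{u\le 0\}}$, which agrees with $\mathrm{sgn}$ off $\{u=0\}$, a set negligible for the stochastic integral because $\int_0^t 1_{\{X_{s-}=a\}}\,d\langle X^c\rangle_s=0$. The left limit in $a$ at $a_0$ equals $1_{\{X_{s-}\ge a_0\}}-1_{\{X_{s-}<a_0\}}$, so $\widetilde{\mathrm{sgn}}(X_{s-}-a_0)-\widetilde{\mathrm{sgn}}(X_{s-}-a_0{-})=-2\cdot 1_{\{X_{s-}=a_0\}}$. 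Dominated convergence for stochastic integrals (the integrands are uniformly bounded by $1$) then delivers $M_t(a_0)-M_t(a_0{-})=-2\int_0^t 1_{\{X_{s-}=a_0\}}\,dX_s$, and substitution into the Tanaka identity yields the announced jump identity $L_t^a(X)-L_t^{a-}(X)=2\int_0^t 1_{\{X_{s-}=a\}}\,dX_s$ (so that the ``$0$'' in the indicator of the statement should evidently read ``$a$''). The main obstacle will be the rigorous passage to the limit inside the stochastic integral, which hinges on uniform boundedness of the integrand together with the observation that $\{s:X_{s-}=a\}$ is $d\langle X^c\rangle_s$-null for every $a$ except possibly countably many; in the càdlàg case one must also verify that the pure-jump correction contributes nothing to the jump in $a$ at $a_0$, which follows by dominated convergence applied to the absolutely convergent jump series pathwise.
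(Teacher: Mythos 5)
The paper does not actually prove this proposition: it is recalled in the preliminaries as a known fact, with the reader sent to Protter's book for the details, so there is no in-paper argument to compare against. Your sketch is the classical proof of that cited result (Revuz--Yor, Theorem VI.1.7, and its c\`adl\`ag analogue in Protter): isolate the stochastic-integral term $M_t(a)$ in the Tanaka--Meyer identity, obtain H\"older-type moment estimates for its martingale part from the Burkholder--Davis--Gundy inequality together with the occupation-times formula, apply Kolmogorov's criterion, handle the finite-variation contribution by dominated convergence, and read the jump in $a$ off the jump of the sign integrand. The approach is the standard and correct one.

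Three remarks. (1) In this paper $X$ is a \emph{continuous} semimartingale (that is the setting fixed in the preliminaries), so the left limits $X_{s-}$ and the pure-jump correction you carry along are superfluous here, though harmless. (2) The moment bound $\mathbb{E}|M_t(a)-M_t(b)|^{p}\le C_p|b-a|^{p/2}$ is not free: after BDG and the occupation-times formula one is left with $\mathbb{E}\bigl[\bigl((b-a)\sup_x L_t^x\bigr)^{p/2}\bigr]$, so you must first localize so that $\sup_x L_t^x$ and $|X|$ are bounded; this reduction should be stated. Relatedly, the set $\{s:\,X_s=a\}$ is $d\langle M\rangle$-null for \emph{every} $a$ (its $d\langle M\rangle$-measure is $\int_{\mathbb{R}}1_{\{x=a\}}L_t^x\,dx=0$), so your hedge ``except possibly countably many'' is unnecessary, and this fact is precisely what makes the jump in $a$ come entirely from the finite-variation part of $X$. (3) You are right that the indicator in the display must read $1_{\{X_s=a\}}$; note only that the factor $2$ goes with the right local time $L^{a+}$ --- which is exactly what your convention $\mathrm{sgn}(0)=-1$ produces in Tanaka's formula --- whereas for the symmetric local time $L^a=(L^{a+}+L^{a-})/2$ defined just above the proposition the jump in $a$ is $\int_0^t 1_{\{X_s=a\}}\,dX_s$ without the $2$. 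That is an ambiguity in the paper's statement, not a flaw in your argument.
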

We refer the reader to the book of Protter \cite{Protter} for a complete account on local times.
Through this paper  the indicator function is denoted $1$. We define $\vee$ and $\wedge$ through $a\vee b=max(a,b)$  and $a\wedge b=min(a,b)$. The positive and negative part is given respectively by $x^{+}=x\vee0$, $x^{-}=x\wedge 0$.
\section{Continuous case}\label{continuous cas}
In this section we derive a generalization of comparison theorem for local times of semimartingales proved in \cite{Ouk Nakao} in the case of continuous semimartingales. The proof used here does not need the upcrossing approximation of the local time process as in \cite{Ouk Nakao}. In order to give such an extension, we need first to recall  the balayage formula.
\\

At the beginning, let
 $X=(X_{t},\;\;\;t \geq 0)$ be a continuous $\mathcal{F}_{t}$-semimartingale issued from zero. For every $t>0$ we define
$$\gamma_{t}=sup\{s \leq t\;:\; X_{s}=0\},$$
with the convention $\sup(\emptyset)=0$, hence in particular $\gamma_{0}=0$. The random variables $\gamma_{t}$ are clearly not stopping times since they depend on the future.
\\

Let us recall an important result of Azéma-Yor\cite{Yor}:
\begin{prop}(Balayage Formula)
\begin{enumerate}
\item[(i)] Let $X$ be a continuous semimartingale, if $k$ is a locally bounded predictable process,
then
 $$k_{\gamma_{t}}X_{t}=k_{0}X_{0}+\int_{0}^{t}k_{\gamma_{s}}\;dX_{s},$$
and therefore $(k_{\gamma_{t}}X_{t})_{t \geq 0}$ is a continuous semimartingale.
Moreover, if $k$ is nonnegative then,
$$L_{t}^{0}(k_{\gamma_{.}}X_{.})=\int_{0}^{t}k_{s}\;dL_{s}^{0}(X).$$
\item [(ii)] If $X$ is a local martingale. $k_{\gamma}X$ is also a local martingale and its local time at $0$ is equal to
    $$\int_{0}^{t}k_{s}\;dL_{s}^{0}(X).$$
\end{enumerate}
\end{prop}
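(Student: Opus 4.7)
The plan is to establish part (i) first by a monotone class argument, then derive the local time identity by comparing two Tanaka-type expansions, and finally get (ii) as an almost immediate corollary.

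\textbf{Reduction to elementary processes.} I would first verify the integration-by-parts identity $k_{\gamma_t}X_t=k_0X_0+\int_0^t k_{\gamma_s}\,dX_s$ on the generating class of predictable processes of the form $k_s=H\,\mathbf{1}_{(\tau,\infty)}(s)$, where $\tau$ is a stopping time and $H$ is bounded and $\mathcal{F}_\tau$-measurable. For such $k$, introduce the stopping time $\sigma=\inf\{s>\tau:X_s=0\}$, which is finite-valued on the set where $X$ hits zero after $\tau$. Using continuity of $X$ (so $X_\sigma=0$ on $\{\sigma<\infty\}$) and the key set identity $\{\gamma_t>\tau\}=\{\sigma\le t\}$, a direct computation gives
\[
k_{\gamma_t}X_t=H\,\mathbf{1}_{\{\sigma\le t\}}(X_t-X_\sigma)=\int_0^t H\,\mathbf{1}_{\{s>\sigma\}}\,dX_s=\int_0^t k_{\gamma_s}\,dX_s,
\]
the last equality again coming from $\{\gamma_s>\tau\}=\{\sigma\le s\}$.

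\textbf{Extension to all locally bounded predictable $k$.} By linearity the formula extends to finite sums, i.e.\ to the simple predictable processes. Since both sides of the identity depend linearly and continuously (in the sense of stochastic integration / dominated convergence) on $k$, a monotone class / uniform-in-$\omega$ approximation argument extends the formula to all bounded predictable $k$, and a standard localization in $n=\inf\{t:|k_t|\ge n\}$ handles the locally bounded case. This yields the semimartingale decomposition of $k_{\gamma_\cdot}X$ in (i). I expect the main obstacle here to be measurability: one needs that $s\mapsto k_{\gamma_s}$ is a progressively measurable (in fact left-continuous on the excursion intervals, with the right regularity at the ends), which must be checked for the elementary class and then preserved through the approximation.

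\textbf{The local time identity.} The clean way to identify $L^0(k_{\gamma_\cdot}X)$ is to apply the identity just proved to $|X|$, which is a continuous semimartingale with the same zero set as $X$, hence the same last-zero process $\gamma$. This gives
\[
k_{\gamma_t}|X_t|=k_0|X_0|+\int_0^t k_{\gamma_s}\,d|X_s|=k_0|X_0|+\int_0^t k_{\gamma_s}\mathrm{sgn}(X_s)\,dX_s+\int_0^t k_{\gamma_s}\,dL^0_s(X),
\]
using Tanaka's formula for $X$. Since $dL^0(X)$ is supported on $\{X=0\}$ and $\gamma_s=s$ there, the last integral equals $\int_0^t k_s\,dL^0_s(X)$. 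On the other hand, Tanaka applied directly to the semimartingale $k_{\gamma_\cdot}X$ (whose stochastic differential is $k_{\gamma_s}dX_s$ by the first part, and which equals $k_{\gamma_\cdot}|X|$ in absolute value since $k\ge 0$) gives
\[
k_{\gamma_t}|X_t|=k_0|X_0|+\int_0^t \mathrm{sgn}(k_{\gamma_s}X_s)\,k_{\gamma_s}\,dX_s+L^0_t(k_{\gamma_\cdot}X),
\]
and $\mathrm{sgn}(k_{\gamma_s}X_s)\,k_{\gamma_s}=k_{\gamma_s}\mathrm{sgn}(X_s)$ because $k\ge 0$. Subtracting the two expressions identifies $L^0_t(k_{\gamma_\cdot}X)=\int_0^t k_s\,dL^0_s(X)$.

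\textbf{Part (ii).} If $X$ is a local martingale, then $k_{\gamma_\cdot}X-k_0X_0=\int_0^\cdot k_{\gamma_s}\,dX_s$ is a stochastic integral of a locally bounded predictable process against a local martingale, hence itself a local martingale; the local time formula is the same as in (i), since its derivation only used the semimartingale structure and nonnegativity of $k$.
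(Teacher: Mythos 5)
The paper offers no proof of this proposition at all: it is recalled verbatim as a known result of Az\'ema--Yor, so there is nothing internal to compare against. Your argument is, in substance, the classical proof (the one in Revuz--Yor, Ch.~VI, \S 4), and it is correct. The elementary computation with $k_s=H\,1_{(\tau,\infty)}(s)$ and $\sigma=\inf\{s>\tau: X_s=0\}$ is right (the set identity $\{\gamma_t>\tau\}=\{\sigma\le t\}$ can fail on the negligible event $\{\sigma=\tau=t\}$, but there $X_t=0$ and both sides vanish), and deriving the local time identity by applying the balayage identity to $|X|$ and comparing with Tanaka's formula for $k_{\gamma_\cdot}X$ is exactly how the local-time statement is usually obtained; the sign manipulations go through with the symmetric convention $\mathrm{sgn}(0)=0$ used for the paper's symmetric local time, since $k\ge 0$ makes $\mathrm{sgn}(k_{\gamma_s}X_s)k_{\gamma_s}=k_{\gamma_s}\mathrm{sgn}(X_s)$ in all cases. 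Two small points to tighten: the processes $H\,1_{(\tau,\infty)}$ alone generate only predictable processes vanishing at $0$, so to recover the $k_0X_0$ term you should either add the generator $H_0 1_{\{0\}}(s)$ with $H_0\in\mathcal F_0$ (a one-line check) or invoke the paper's standing assumption that $X$ is issued from zero, which kills that term; and the predictability of $s\mapsto k_{\gamma_s}$, which you correctly single out as the real technical content, is precisely the lemma one must import from Az\'ema--Yor or Revuz--Yor rather than wave at, since $\gamma_s$ is not a stopping time. Neither point is a gap in the mathematics, only in the level of detail.
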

In the two following theorems we use balayage formula to give a clean proof of theorems originated from Nasyrov \cite{Nasyrov}.
\begin{thm}(The generalized Tanaka formula)
For $t \geq 0$, $z>0$, we have
$$\frac{1}{2}(z\wedge L_{t}^{0}(X))=1_{\{L_{t}^{0}\leq z\}}X_{t}^{+}-X_{0}^{+}-\int_{0}^{t}1_{\{X_{s}\geq 0\;,\;L_{s}^{0} \leq z\}}\;dX_{s}.$$
\end{thm}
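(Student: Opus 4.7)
The strategy is to apply the balayage formula with a carefully chosen predictable indicator, producing a new continuous semimartingale whose Tanaka decomposition delivers the stated identity.

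I would set $k_s := 1_{\{L_s^0(X) \leq z\}}$. Because $L^0$ is continuous and nondecreasing, $k$ equals $1$ on $[0,\tau]$ and $0$ on $(\tau,\infty)$ where $\tau := \inf\{s : L_s^0 > z\}$; in particular $k$ is a bounded, nonnegative, left-continuous (hence predictable) process. The crucial observation is that $dL^0(X)$ is carried by $\{X = 0\}$, so $L^0$ stays constant on every excursion interval $(\gamma_s, s]$ of $X$ away from zero. This forces $L^0_{\gamma_s} = L^0_s$, and therefore $k_{\gamma_s} = k_s$ identically.

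Applying part (i) of the balayage formula to $X$ then yields
$$1_{\{L_t^0 \leq z\}} X_t \;=\; k_{\gamma_t}X_t \;=\; k_0X_0 + \int_0^t k_{\gamma_s}\,dX_s \;=\; X_0 + \int_0^t 1_{\{L_s^0 \leq z\}}\,dX_s,$$
so the process $W_t := 1_{\{L_t^0 \leq z\}} X_t$ is a continuous semimartingale with differential $dW_s = 1_{\{L_s^0 \leq z\}}\,dX_s$. Part (ii) of the balayage formula (with $k$ nonnegative) immediately gives the key local-time identity
$$L_t^0(W) \;=\; \int_0^t 1_{\{L_s^0 \leq z\}}\,dL_s^0(X) \;=\; L_t^0(X) \wedge z,$$
since integrating $1_{\{L^0 \leq z\}}$ against the nondecreasing process $L^0$ caps it at level $z$.

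The final step is Tanaka's formula for the positive part of $W$, which in the paper's convention reads
$$W_t^+ \;=\; W_0^+ + \int_0^t 1_{\{W_s \geq 0\}}\,dW_s + \tfrac{1}{2}\,L_t^0(W).$$
I then identify each term with its counterpart in the target identity: $W_0^+ = X_0^+$ (since $k_0 = 1$), $W_t^+ = 1_{\{L_t^0 \leq z\}} X_t^+$ (since $k \in \{0,1\}$), and, observing that $1_{\{W_s \geq 0\}} \cdot 1_{\{L_s^0 \leq z\}} = 1_{\{X_s \geq 0,\, L_s^0 \leq z\}}$,
$$\int_0^t 1_{\{W_s \geq 0\}}\,dW_s \;=\; \int_0^t 1_{\{X_s \geq 0,\, L_s^0 \leq z\}}\,dX_s.$$
Substituting $L^0(W) = L^0(X) \wedge z$ and rearranging delivers the claimed formula.

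The main obstacle is securing the identity $L^0_{\gamma_s} = L^0_s$, which is what turns the balayage formula into a useful reduction; this rests on the standard but non-obvious fact that the local time measure is supported on the zero set of $X$. Once this observation is in place, everything else is an essentially mechanical combination of the two parts of the balayage formula with the Tanaka decomposition of $W$.
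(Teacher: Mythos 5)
Your proof is correct and follows essentially the same route as the paper: both hinge on the balayage formula with $k_s=1_{\{L_s^0\leq z\}}$, the observation that $L^0_{\gamma_s}=L^0_s$ because $dL^0(X)$ is carried by $\{X=0\}$, and Tanaka's formula. The only (immaterial) difference is the order of operations — the paper applies balayage directly to $X^{+}$ and then expands $dX^{+}$ by Tanaka, whereas you balayage $X$ first and then apply Tanaka to the resulting process $W=k_{\gamma_\cdot}X$.
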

\begin{proof}
Let us set $k_{t}=1_{\{L_{t}^{0}\leq z\}}$,\\
Balayage formula yields,
\begin{eqnarray*}
k_{\gamma_{t}}.X_{t}^{+}&=&k_{0}X_{0}^{+}+\int_{0}^{t}k_{\gamma_{s}}\;dX_{s}^{+}
\end{eqnarray*}
Using Tanaka's formula, this equality can be reexpressed as:
\begin{eqnarray*}
k_{\gamma_{t}}.X_{t}^{+}&=& X_{0}^{+}+\int_{0}^{t}1_{\{L_{\gamma_{s}}^{0}\leq z\}}1_{\{X_{s} \geq 0\}}\;dX_{s}+\frac{1}{2}\int_{0}^{t}1_{\{L_{\gamma_{s}}^{0}\leq z\}}\;dL_{s}^{0}(X).
\end{eqnarray*}
Since the measure $dL_{.}^{0}(X)$ is carried by the set $\{s\;,\;X_{s}=0\}$ and by the definition of $\gamma_{t}$, we see that $L_{\gamma_{t}}^{0}(X)=L_{t}^{0}(X)$, and then
$$1_{\{L_{t}^{0}\leq z\}}X_{t}^{+}=X_{0}^{+}+\int_{0}^{t}1_{\{X_{s}>0\;,\;L_{s}^{0}\leq z\}}\;dX_{s}+\frac{1}{2}(z \wedge L_{t}^{0}(X)).$$
Thus the result.

\end{proof}
\begin{rem}
It is obvious that $z=+\infty$ in precedent theorem, corresponds to the classical Tanaka formula.
\end{rem}
\begin{thm}(The generalized Skorokhod equation)
Let $\Phi$ be a locally integrable function. For $t\geq 0$,
\begin{eqnarray*}
\Phi(L_{t}^{0}(X))|X_{t}|=\Phi(0) |X_{0}|+\int_{0}^{t} sgn(X_{s}) \Phi(L_{s}^{0}(X))\;dX_{s}+\int_{0}^{L_{t}^{0}(X)} \Phi(z)\;dz.
\end{eqnarray*}
Moreover, if $\Phi$ is continuous and strictly positive function, we have
$$\int_{0}^{L_{t}^{0}(X)} \Phi(z)\;dz=-\min_{0\leq s \leq t} \;\min\left( \int_{0}^{s}sgn(X_{u})\Phi(L_{u}^{0}(X))\;dX_{u},0\right).$$
\end{thm}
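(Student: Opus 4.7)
The plan is to mimic the argument used for the generalized Tanaka formula above, applying the balayage formula (part (i)) to the continuous semimartingale $|X|$ with the predictable process $k_s := \Phi(L_s^0(X))$. Since the zeros of $|X|$ and of $X$ coincide, the auxiliary random times $\gamma_s$ are the same, and the key structural identity $L_{\gamma_s}^0(X)=L_s^0(X)$---the local time is constant on every excursion of $X$ away from zero---will again be the engine of the proof.

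Concretely, first assume $\Phi$ is continuous so that $k$ is continuous and locally bounded; the locally integrable case is recovered at the end by an approximation in $L^1_{\mathrm{loc}}$. Tanaka's formula gives
\begin{equation*}
d|X|_s=\mathrm{sgn}(X_s)\,dX_s+dL_s^0(X),
\end{equation*}
and the balayage formula applied to $|X|$ reads
\begin{equation*}
\Phi(L_{\gamma_t}^0(X))\,|X_t|=\Phi(0)|X_0|+\int_0^t \Phi(L_{\gamma_s}^0(X))\,d|X|_s.
\end{equation*}
Replacing $L_{\gamma_s}^0(X)$ by $L_s^0(X)$ (valid because $dL^0(X)$ is carried by $\{X=0\}$) and invoking the Lebesgue--Stieltjes change of variable
\begin{equation*}
\int_0^t \Phi(L_s^0(X))\,dL_s^0(X)=\int_0^{L_t^0(X)}\Phi(z)\,dz
\end{equation*}
then yields the first displayed identity.

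For the second claim, assume $\Phi$ continuous and strictly positive. Setting $F(u)=\int_0^u \Phi(z)\,dz$ (continuous, strictly increasing) and $M_t=\int_0^t \mathrm{sgn}(X_s)\,\Phi(L_s^0(X))\,dX_s$, the first part (recalling $X_0=0$) reads
\begin{equation*}
F(L_t^0(X))+M_t=\Phi(L_t^0(X))\,|X_t|\ \ge\ 0,
\end{equation*}
so $-M_s\le F(L_s^0(X))\le F(L_t^0(X))$ for every $s\le t$, and taking the supremum in $s$ gives
\begin{equation*}
-\inf_{0\le s\le t} M_s\ \le\ F(L_t^0(X)).
\end{equation*}
Conversely, since $F\circ L^0(X)$ is constant on each excursion of $X$ away from $0$, and $X_{\gamma_t}=0$ by continuity, at time $s=\gamma_t$ the identity above collapses to $F(L_{\gamma_t}^0(X))=-M_{\gamma_t}$, whence
\begin{equation*}
F(L_t^0(X))=F(L_{\gamma_t}^0(X))=-M_{\gamma_t}\ \le\ -\inf_{0\le s\le t}M_s.
\end{equation*}
The two bounds combine to give $F(L_t^0(X))=-\inf_{s\le t} M_s$; since $M_0=0$ this infimum is non-positive, so it equals $\inf_{s\le t}\min(M_s,0)$, which is the announced formula.

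The main obstacle I expect is not the balayage manipulation---by now routine after the previous theorem---but rather the careful treatment of the degenerate case $L_t^0(X)=0$ in the second part (the convention $\sup\emptyset=0$ forces $\gamma_t=0$, hence $M_{\gamma_t}=0$, which is exactly what the $\min(\cdot,0)$ truncation absorbs), together with verifying that the approximation $\Phi_n\to\Phi$ in $L^1_{\mathrm{loc}}$ passes to the limit simultaneously in the Lebesgue--Stieltjes integral against $dL_s^0(X)$ and in the It\^o integral defining $M_t$, for a generic locally integrable $\Phi$.
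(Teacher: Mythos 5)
Your proof of the first identity is essentially the paper's own argument: balayage applied to $|X|$ with $k_s=\Phi(L_s^0(X))$, Tanaka's formula, the replacement $L_{\gamma_s}^0(X)=L_s^0(X)$ justified by the support of $dL^0(X)$, and the Lebesgue--Stieltjes change of variables $\int_0^t\Phi(L_s^0(X))\,dL_s^0(X)=\int_0^{L_t^0(X)}\Phi(z)\,dz$. Where you go beyond the paper is the second (Skorokhod minimum) formula: the paper stops at the first identity and dismisses the rest with ``whence the desired result,'' whereas you actually prove it, and correctly --- the lower bound $-\inf_{s\le t}M_s\le F(L_t^0(X))$ from positivity of $\Phi(L_s^0(X))|X_s|$ and monotonicity of $F\circ L^0(X)$, and the matching upper bound by evaluating at $s=\gamma_t$ where $X_{\gamma_t}=0$ and $L_{\gamma_t}^0(X)=L_t^0(X)$, is exactly the classical Skorokhod-lemma argument and closes the gap the paper leaves open. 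The one caveat you flag yourself is real but shared with the paper: for merely locally integrable $\Phi$ the process $k_s=\Phi(L_s^0(X))$ need not be locally bounded, so the balayage formula as stated does not apply directly and some approximation or localization is genuinely needed; neither you nor the authors carry that out.
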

\begin{proof}
Let us set $k_{t}=\Phi(L_{t}^{0}(X))$ we denote $\gamma_{t}$ the random variable $$\gamma_{t}=\sup\{s <t\;,\; X_{s}=0\}.$$
 \\Balayage formula gives,
$$k_{\gamma_{t}}|X_{t}|=k_{0}|X_{0}|+\int_{0}^{t} k_{\gamma_{s}}\;d|X_{s}|.$$
If we apply Tanaka's formula on $|X|$, we get:
\begin{eqnarray*}
k_{\gamma_{t}}|X_{t}|&=&k_{0}|X_{0}|+\int_{0}^{t} k_{\gamma_{s}}sgn(X_{s})\;dX_{s}+ \int_{0}^{t} k_{\gamma_{s}}\;d_{s}L_{s}^{0}(X)\\
&=&k_{0}|X_{0}|+ \int_{0}^{t} \Phi(L_{s}^{0}(X))sgn(X_{s})\;dX_{s}+ \int_{0}^{t} \Phi(L_{s}^{0}(X))\;d_{s}L_{s}^{0}(X)
\end{eqnarray*}
Since $(L_{t}^{0}(X),\;t \geq 0)$ is continuous nondecreasing process, the equality
$$\int_{0}^{t} \Phi(L_{s}^{0}(X))\;d_{s}L_{s}^{0}(X)=\int_{0}^{L_{t}^{0}(X)}\Phi(z)\;dz,$$
holds for any $t>0$. For more details see Nasyrov \cite{Nasyrov}.
Thus,
$$k_{\gamma_{t}}|X_{t}|=k_{0}|X_{0}|+ \int_{0}^{t} \Phi(L_{s}^{0}(X))sgn(X_{s})\;dX_{s}+\int_{0}^{L_{t}^{0}(X)}\Phi(z)\;dz$$,
whence the desired result.
\end{proof}
\\
Now, the inequality between local times combined with the balayage formula allow us to establish our main result:
\begin{thm}\label{positif}
Let $X$ and $Y$ be two continuous semimartingales such that
\begin{enumerate}
  \item[(1)] $\{s\ge 0\;/\;X_{s}=0\}\subset\{s\geq 0\;/\;Y_{s}=0\}$ and,
  \item [(2)]$X_{s}^{+}\leq Y_{s}^{+}$,
  \end{enumerate}
  then the measure $dL_{t}^{0}(X)$ is absolutely continuous with respect to the measure $dL_{t}^{0}(Y)$ :
  $$dL_{t}^{0}(X)\ll dL_{t}^{0}(Y),$$ and there exist a predictable process $(\theta_{s})_{s\geq 0}$, $\theta_{s} \in [0,1]$ such that
  $$L_{t}^{0}(X)=\int_{0}^{t}\theta_{s}\;dL_{s}^{0}(Y),$$
\end{thm}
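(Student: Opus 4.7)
The plan is to establish the stronger measure inequality $dL^0(X)\le dL^0(Y)$, from which the absolute continuity and the density $\theta\in[0,1]$ follow via Radon-Nikodym. By a monotone-class argument, this reduces to proving
$$\int_0^t k_s\,dL^0_s(X)\le\int_0^t k_s\,dL^0_s(Y)$$
for every bounded nonnegative predictable $k$. I would attack this by applying the balayage formula simultaneously to $X^+$ and $Y^+$ with the common random time $\gamma_t:=\sup\{s\le t:Y_s=0\}$ associated to $Y$ rather than to $X$.

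The key structural observation is that hypotheses (1) and (2) force $X^+$ to vanish on the whole set $\{Y=0\}$: if $Y_s=0$ then $X^+_s\le Y^+_s=0$. Hence both $X^+$ and $Y^+$ are continuous semimartingales vanishing on $\{Y=0\}$, so the (generalized) balayage formula applies to each with $\gamma$. Combining balayage with Tanaka's identity $dX^+_s=1_{\{X_s>0\}}\,dX_s+\tfrac12\,dL^{0+}_s(X)$, and using the fact that $dL^{0+}(X)$ is carried by $\{X=0\}\subset\{Y=0\}$ (where $\gamma_s=s$), one arrives, after applying Tanaka once more to the nonnegative semimartingale $Z_t:=k_{\gamma_t}X^+_t$, at the identity $L^{0+}_t(Z)=\int_0^t k_s\,dL^{0+}_s(X)$; the same argument yields $L^{0+}_t(W)=\int_0^t k_s\,dL^{0+}_s(Y)$ for $W_t:=k_{\gamma_t}Y^+_t$.

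The main obstacle is then the comparison $L^{0+}(Z)\le L^{0+}(W)$, given only $0\le Z\le W$. The naive implication fails in general, but here both $Z$ and $W$ vanish on the common set $\{Y=0\}$, and this lets us apply Tanaka to the nonnegative difference $V:=W-Z$ together with the classical identity $\int 1_A\,dZ=\int 1_A\,dW$ valid when $A$ is a predictable subset of $\{Z=W\}$. Since $\{V=0\}\cap\{Z>0\}=\{V=0\}\cap\{W>0\}=\{Z=W>0\}$, the $dZ$- and $dW$-contributions on this set cancel, leaving the clean decomposition $L^{0+}(W)=L^{0+}(Z)+L^{0+}(V)\ge L^{0+}(Z)$ and hence the desired bound for the right local time. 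An analogous argument applied to the negative parts (together with $L^0=(L^{0+}+L^{0-})/2$), or alternatively a direct appeal to the generalized Tanaka formula proved above (which expresses $L^0$ through $X^+$ alone), upgrades the inequality to the symmetric local time $L^0$, and Radon-Nikodym finally supplies the predictable density $\theta\in[0,1]$.
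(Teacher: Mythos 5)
Your overall architecture coincides with the paper's: reduce to nonnegative semimartingales, transport everything through the balayage formula with a common last-zero time $\gamma$, deduce $\int_0^t k_s\,dL^0_s(X)\le\int_0^t k_s\,dL^0_s(Y)$ for every nonnegative predictable $k$, and finish with Radon--Nikodym. The one genuine difference is the key scalar comparison $L^{0+}(Z)\le L^{0+}(W)$ for $Z=k_{\gamma}X^{+}$, $W=k_{\gamma}Y^{+}$: the paper simply invokes Ouknine's $1988$ comparison lemma as a black box at this point, whereas you reprove it from scratch by applying Tanaka to the nonnegative difference $V:=W-Z$. This makes the argument self-contained, which is a real gain.

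That said, one step is misstated, although not fatally. The ``classical identity'' $\int 1_A\,dZ=\int 1_A\,dW$ for a predictable $A\subseteq\{Z=W\}$ is false in general: it amounts to $\int 1_{\{V=0\}}\,dV=0$, which already fails for $V=|B|$, where $\int_0^t 1_{\{V_s=0\}}\,dV_s=\tfrac12 L^{0+}_t(V)>0$. What saves you is precisely that your $V$ is nonnegative, so that $1_{\{V=0\}}\,dV=\tfrac12\,dL^{0+}_{\cdot}(V)$ is a \emph{nonnegative} measure; hence the term $\int 1_{\{V=0,\,W>0\}}\,dV$ that you claim cancels is merely $\ge 0$, and your ``clean decomposition'' should be weakened to $dL^{0+}(W)=dL^{0+}(Z)+1_{\{W=0\}}\,dL^{0+}(V)\ge dL^{0+}(Z)$, which is all you need. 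Two further points deserve to be made explicit. First, the identity $\int_0^t 1_{\{W_s=0\}}\,dZ_s=\tfrac12 L^{0+}_t(Z)$ is exactly where hypothesis $(1)$ enters: $dL^{0+}(Z)=k\,dL^{0+}(X)$ is carried by $\{X=0\}\subseteq\{Y=0\}\subseteq\{W=0\}$, and without $(1)$ both this step and the theorem fail. Second, the proposed ``analogous argument for the negative parts'' does not run, since $X^{+}\le Y^{+}$ gives no control on $X^{-}$ versus $Y^{-}$; you should instead rely, as the paper implicitly does via $L^0(Z)=L^0(Z^{+})$, on reducing the symmetric local time to the right local time of the positive parts. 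With these corrections the proof goes through.
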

\begin{proof}
%$X$ and $Y$ satisfies the assumptions of \cite{Nakao}, it follows that $L_{t}^{0}(X)\ll L_{t}^{0}(Y)$.\\
We recall first that $L_{t}^{0}(Z)=L_{t}^{0}(Z^{+})$ for every semimartingale $Z$. Hence it is enough to consider the case where $X$ and $Y$ are non-negatives semimartingales.\\
 Now, let us remark that if
 $$\{s\geq 0\;/\;X_{s}=0\}\subset\{s\geq 0\;/\;Y_{s}=0\},$$ then, $$\gamma_{t}^{X}=\gamma_{t}^{Y}=\gamma.$$
Let $k$ be a predictable, positive and locally bounded process. Under the assumptions stated above, the processes
$$(\widetilde{X}_{t})_{t\geq 0}=(k_{\gamma_{t}}.X_{t})_{t\geq 0}\;\;\;\;\mbox{and}\;\;\;\;(\widetilde{Y}_{t})_{t\geq 0}=(k_{\gamma_{t}}.Y_{t})_{t\geq 0} $$
satisfy
$$\{s\geq 0\;/\;\widetilde{X}_{s}=0\}\subset\{s\geq 0\;/\;\widetilde{Y}_{s}=0\}.$$
It is clear that:
$$\widetilde{X}_{t}\leq \widetilde{Y}_{t}.$$
Following Ouknine \cite{Ouk Nakao}, under this hypothesis,
$$L_{t}^{0}(\widetilde{X})\ll L_{t}^{0}(\widetilde{Y}).$$
On the other hand, by the balayage formula:
$$L_{t}^{0}(\widetilde{X})=\int_{0}^{t}k_{s}\;dL_{s}^{0}(X)\;\;\;\;\; and \;\;\;\;\;L_{t}^{0}(\widetilde{Y})=\int_{0}^{t}k_{s}\;dL_{s}^{0}(Y) .$$
Consequently,
$$\int_{0}^{.}k_{s}\;dL_{s}^{0}(X) \ll \int_{0}^{.}k_{s}\;dL_{s}^{0}(Y). $$
Hence,
$$dL_{t}^{0}(X)\ll dL_{t}^{0}(Y).$$
Therefore, by Radon-Nikodym theorem, there exists a predictable process $\theta$, $\theta \in [0,1]$  such that
$$L_{t}^{0}(X)=\int_{0}^{t}\theta_{s}\;dL_{s}^{0}(Y).$$
\end{proof}
\\
%Let $X$ and $Y$ be two continuous semimartingales. We will state other conditions wich are sufficient to yield $dL_{t}(X)\ll dL_{t}(Y) $
%\begin{prop}
%Let $X$ and $Y$ be two positive continuous semimartingales such that:
%\begin{itemize}
%  \item [(i)] $X \leq Y$ and
%  \item [(ii)] $\langle X\rangle -\langle Y \rangle $ is an increasing process,
%\end{itemize}
 % then the measure $dL_{t}^{0}(Y)$ is absolutely continuous with respect to the measure $dL_{t}^{0}(X)$ :

%\end{prop}
\begin{rem}
The Radon-Nikodym derivative of $dL_{t}^{0}(X)$ with respect to the measure $dL_{t}^{0}(Y)$ is precisely,
$$\theta_{\gamma_{t}}=\liminf_{\varepsilon\searrow 0}\frac{ X_{\gamma_{t}+\varepsilon}}{Y_{\gamma_{t}+\varepsilon}}.$$
\end{rem}
\begin{rem}
If there exists a continuous semimartingale $\xi$ such that\\
$X=\xi.Y$,
the formula can be derived by a result of \cite{Sup}
giving the local time of product of two continuous semimartingales. In fact,
\begin{eqnarray*}
L_{t}^{0}(X)&=&L_{t}^{0}(\xi.Y)\\
&=& \int_{0}^{t} \xi_{s}^{+}dL_{s}^{0}(Y)+\int_{0}^{t}Y_{s}^{+}dL_{s}^{0}(\xi).
\end{eqnarray*}
The measure $dL_{s}^{0}(\xi)$ is carried by the set $\{s\;,\;\xi_{s}=0\}\subset\{s\;,\;X_{s}=0\} $.
According to the assumption $(1)$,
$$\{s\;,\;\xi_{s}=0\}\subset\{s\;,\;Y_{s}=0\}. $$
Consequently, $$L_{t}^{0}(X)=\int_{0}^{t} \xi_{s}^{+}\;dL_{s}^{0}(Y).$$
Hence de desired result.
\end{rem}
Now, we show that The absolute continuity of the measure $dL_s^0(X)$ with respect to the measure $dL_s^0(Y)$ is localizable.
\begin{prop}
Let $A$ denote the set $\{\forall s \in [0,T] \;\;\; 0\leq X_s \leq Y_s\}$. If
 $$\{s\ge 0\;/\;X_{s}=0\}\subset\{s\geq 0\;/\;Y_{s}=0\},$$
 then,
 $$dL_s^0(X)\ll dL_s^0(Y) \;\;\;\; \mbox{on the set} \;A$$
\end{prop}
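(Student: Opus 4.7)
The plan is to reduce the local statement to the global Theorem \ref{positif} via a stopping-time argument. Define
$$\tau = \inf\bigl\{s \geq 0 \;:\; X_s < 0 \text{ or } X_s > Y_s\bigr\} \wedge T,$$
which is a stopping time since $X$ and $Y$ are continuous and the defining conditions are open. On the event $A$, neither of the two failure conditions occurs before $T$, hence $\tau = T$ identically on $A$. Set $\widetilde X_s = X_{s \wedge \tau}$ and $\widetilde Y_s = Y_{s \wedge \tau}$, which are again continuous semimartingales.

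The next step is to check that $\widetilde X$ and $\widetilde Y$ satisfy the hypotheses of Theorem \ref{positif}. For $s \leq \tau$, the definition of $\tau$ gives $0 \leq \widetilde X_s = X_s \leq Y_s = \widetilde Y_s$. For $s > \tau$, continuity forces $0 \leq X_\tau \leq Y_\tau$ (at the first time a strict inequality would be violated, the two sides meet), so $0 \leq \widetilde X_s = X_\tau \leq Y_\tau = \widetilde Y_s$. Thus $\widetilde X^+ = \widetilde X \leq \widetilde Y = \widetilde Y^+$ everywhere. For the zero set: if $s \leq \tau$ and $\widetilde X_s = X_s = 0$, then $Y_s = 0$ by hypothesis, so $\widetilde Y_s = 0$; if $s > \tau$ and $X_\tau = 0$, then in the case $\tau < T$ we have $\tau = \tau_1 \wedge \tau_2$ and continuity yields $Y_\tau = 0$ (either because $X_\tau = Y_\tau$ at $\tau_2$, or because the global hypothesis $\{X=0\} \subset \{Y=0\}$ applies at $\tau_1$), and in the case $\tau = T$ the hypothesis again gives $Y_T = 0$. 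Hence $\{s : \widetilde X_s = 0\} \subset \{s : \widetilde Y_s = 0\}$.

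Theorem \ref{positif} now applies to $\widetilde X$ and $\widetilde Y$, yielding
$$dL_s^0(\widetilde X) \ll dL_s^0(\widetilde Y).$$
Since stopping at $\tau$ commutes with the construction of the local time ($L_s^0(\widetilde X) = L_{s \wedge \tau}^0(X)$, and likewise for $Y$), on the event $A$ where $\tau = T$ the measures $dL_s^0(\widetilde X)$ and $dL_s^0(X)$ coincide on $[0,T]$, and similarly for $Y$. Restricting the absolute continuity to $A \cap \{s \leq T\}$ gives the claim $dL_s^0(X) \ll dL_s^0(Y)$ on $A$.

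The only subtle point I anticipate is the verification of the zero-set inclusion for $\widetilde X$ and $\widetilde Y$ at and beyond the stopping time $\tau$; this is where continuity of the paths is used crucially to transfer the boundary inequality $X_\tau \leq Y_\tau$ into an equality of zeros when $X_\tau = 0$. Once this is settled, the rest is a routine application of Theorem \ref{positif} together with the fact that the local time of a stopped semimartingale is the stopped local time.
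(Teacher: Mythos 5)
Your argument is correct in substance but takes a genuinely different route from the paper. The paper proves the pathwise inequality $L_t^0(X)\le L_t^0(Y)$ on $A$ directly: on $A$ the number of upcrossings of $[0,\epsilon]$ by $X$ is dominated by that of $Y$ (the zero-set inclusion guarantees that every return of $X$ to zero is a return of $Y$ to zero, and $X\le Y$ forces $Y$ to reach level $\epsilon$ whenever $X$ does), and letting $\epsilon\to 0$ in L\'evy's approximation $L_t^0(X)=\lim_{\epsilon\to 0}2\epsilon M_t^X([0,\epsilon])$ gives the inequality of local times; the upgrade to absolute continuity of the measures is then done by the same balayage device as in Theorem \ref{positif}. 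You instead localize: you stop at the debut $\tau$ of the open set $\{X<0\}\cup\{X>Y\}$, verify the hypotheses of Theorem \ref{positif} for the stopped pair on all of $\Omega$, and transfer the conclusion back to $A$ using $L_s^0(X^\tau)=L_{s\wedge\tau}^0(X)$ together with $\tau=T$ on $A$. Your route reuses the global theorem as a black box and avoids any upcrossing count; the paper's route is self-contained and records the quantitative bound $L_t^0(X)\le L_t^0(Y)$ on $A$ explicitly (which also follows from your reduction, since the density $\theta$ produced by Theorem \ref{positif} is bounded by $1$).

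One boundary case in your write-up needs a patch. If $0\le X_0\le Y_0$ fails on a set of positive probability (nothing in the proposition excludes this), then $\tau=0$ there, your assertion that ``continuity forces $0\le X_\tau\le Y_\tau$'' breaks down, and the stopped processes need not satisfy $\widetilde X^+\le\widetilde Y^+$ almost surely, so Theorem \ref{positif} cannot be invoked as you state it. The fix is one line: on that event the stopped processes are constant, hence $dL^0(\widetilde X)=0$ and the absolute continuity is vacuous; alternatively, multiply both stopped processes by the $\mathcal{F}_0$-measurable indicator $1_{\{0\le X_0\le Y_0\}}$, which preserves the semimartingale property and multiplies both local times by that same indicator, and observe that $A\subset\{0\le X_0\le Y_0\}$. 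With that remark added, your proof is complete.
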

\begin{proof}
Let $X$ and $Y$ be two continuous semimartingales. Let $\epsilon >0$,\\ $0<t \leq T$.
 We denote $M_t^X([0,\epsilon])$ the number of upcrossings of $\epsilon$ by $X$ on the interval $[0,t]$, (recall that $X_s$ is said to have an upcrossing of $\epsilon$ at $s_0>0$ if for some $x>0$, $X_s \leq  \epsilon$ in $(s_0-x,s_0)$ and $X_s \geq \epsilon$ in $(s_0, s_0+x)$). On the set $A$ we have
$$M_t^X([0,\epsilon]) \leq M_t^Y([0,\epsilon]);$$
 by multiplying this inequality by $2 \epsilon$, we get by Lévy result which says that $L_t^0(X)=\lim_{\epsilon \rightarrow 0}2 \epsilon M_t^X([0,\epsilon]) $
 $$L_t^0(X) \leq L_t^0(Y) \;\;\;\; \mbox{on the set} \;A$$
 To prove that $dL_t^0(X) \ll dL_t^0(Y)$ on the set $A$, we proceed by the same way as theorem $2.3$ by using the balayage formula.
\end{proof}
\bigskip
\\
The purpose of this paragraph, it to show that under suitable but weak conditions, comparison result for local times can be proved. The idea of the proof is the same as in \cite{Ouk Nakao}.
\\
Let $X$ be a continuous semimartingale null in $0$. We set
$$Z(X):=\{s \leq T;\; X_s=0\}$$
So,
$$Z(X)^c:=\cup_{n \geq 1}]g_n,d_n[$$
 where $]g_n,d_n[$ is the $n^e$ interval of excursion around $0$.\\
 Set
 $$M_n^X(t)=\sup_{s \in ]g_n,d_n[\cap [0,t]}(X_s^+)$$
 \begin{prop}
Let $X$ and $Y$ be two continuous semimartingales such that:
 \begin{enumerate}
 \item $Z(X)=Z(Y)$,
 \item $M_n^X(t) \leq M_n^Y(t)$, $\; \forall n  \in \mathbb{N}$, $\forall t \leq T$.
 \end{enumerate}
 Then, $$dL_t^0(X) \ll dL_t^0(Y).$$
 \end{prop}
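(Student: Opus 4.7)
The plan is to imitate the two-step strategy of the preceding proposition: first establish the scalar inequality $L_t^0(X)\le L_t^0(Y)$ via a L\'evy upcrossing/excursion argument, and then promote it to an inequality of random measures by inserting an arbitrary predictable weight through the balayage formula.

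For the first step, hypothesis (1) guarantees that $X$ and $Y$ share the same family of excursion intervals $]g_n,d_n[$, and hypothesis (2) then says that on each such excursion the running maximum of the positive part of $X$ is dominated by that of $Y$. In particular, if the $n$-th excursion of $X$ reaches a level $\varepsilon>0$ before time $t$, so does the $n$-th excursion of $Y$. Writing $N_t^X(\varepsilon)$ and $N_t^Y(\varepsilon)$ for the number of excursions in $[0,t]$ attaining $\varepsilon$ for $X$ and $Y$ respectively, one obtains $N_t^X(\varepsilon)\le N_t^Y(\varepsilon)$. Combined with the L\'evy representation of local time used in the previous proposition (in its ``one upcrossing per excursion reaching $\varepsilon$'' form), the limit $\varepsilon\searrow 0$ yields
\[
L_t^0(X)\le L_t^0(Y)\qquad \text{a.s., for every }t\le T.
\]

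For the second step, fix a nonnegative, locally bounded, predictable process $k$. Since $\gamma^X_t=\gamma^Y_t=:\gamma_t$, both $\widetilde X_t:=k_{\gamma_t}X_t$ and $\widetilde Y_t:=k_{\gamma_t}Y_t$ are continuous semimartingales with $Z(\widetilde X)=Z(\widetilde Y)$, and on each excursion $M_n^{\widetilde X}(t)=k_{g_n}M_n^X(t)\le k_{g_n}M_n^Y(t)=M_n^{\widetilde Y}(t)$. Hence the pair $(\widetilde X,\widetilde Y)$ satisfies the same hypotheses (1) and (2); applying Step 1 to it and rewriting the conclusion via the balayage identity $L_t^0(k_\gamma Z)=\int_0^t k_s\,dL_s^0(Z)$ gives
\[
\int_0^t k_s\,dL_s^0(X)\le\int_0^t k_s\,dL_s^0(Y),\qquad t\in[0,T].
\]
Since $k$ is an arbitrary nonnegative predictable weight, this amounts to $dL_t^0(X)\le dL_t^0(Y)$ as random measures on $[0,T]$, and in particular $dL_t^0(X)\ll dL_t^0(Y)$ (with the Radon--Nikodym derivative in $[0,1]$, exactly as in Theorem~\ref{positif}).

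The delicate point is Step 1. Hypothesis (2) only controls the running maxima of the excursions, not the paths themselves, so $X$ may cross a level $\varepsilon$ arbitrarily many times inside a single excursion in which $Y$ crosses it only once; one cannot dominate the raw upcrossing counts. The correct bookkeeping, borrowed from \cite{Ouk Nakao}, is to count \emph{excursions reaching $\varepsilon$} rather than individual upcrossings of $\varepsilon$; once this normalization is adopted the inequality $N_t^X(\varepsilon)\le N_t^Y(\varepsilon)$ is automatic, the L\'evy-type limit $\varepsilon\searrow 0$ delivers the pointwise comparison of local times, and Step~2 is then essentially formal.
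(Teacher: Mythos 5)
Your proof is correct and follows essentially the same route as the paper: a L\'evy upcrossing comparison to get $L_t^0(X)\le L_t^0(Y)$, then the balayage formula with an arbitrary nonnegative predictable weight to upgrade this to the measure inequality, exactly as in Theorem~\ref{positif}. Your closing caveat is well taken and in fact sharpens the paper's wording: the quantity $M_t^X([0,\epsilon])$ must be read as the number of upcrossings of the \emph{interval} $[0,\epsilon]$ (equivalently, of excursions reaching level $\epsilon$), since the raw count of crossings of the level $\epsilon$ need not be dominated under hypothesis (2).
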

 \begin{proof}
 Let $X$ and $Y$ be two continuous semimartingales. let $\epsilon >0$,\\ $0<t \leq T$.
 We denote $M_t^X([0,\epsilon])$ the number of upcrossings of $\epsilon$ by $X$ on the interval $[0,t]$.
 If $M_n^X(t)  \leq M_n^Y(t) $ for all $ n \in \mathbb{N}$, then we have:
 $$M_t^X([0,\epsilon]) \leq M_t^Y([0,\epsilon]);$$
 by multiplying this inequality by $2 \epsilon$, we get by Lévy result \cite{levy}
 $$L_t^0(X) \leq L_t^0(Y) .$$
 To prove that $dL_t^0(X) \ll dL_t^0(Y)$, we proceed by the same way as theorem (\ref{positif}) by using the balayage formula.
 \end{proof}
 \begin{cor}
 Let $X=(X_1,..., X_n)$ be a $n$-dimensional semimartingale, $N_1$ and $N_2$ be norms on $\mathbb{R}^n$ such that $N_1 \leq N_2$. Then, $dL_t^0(N_1(X)) \leq  dL_t^0(N_2(X))$.
 \end{cor}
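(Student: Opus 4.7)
The plan is to reduce the statement directly to the preceding proposition, which compares local times of two semimartingales whose excursion maxima are comparable. Since a norm $N$ on $\mathbb{R}^n$ is Lipschitz and convex, for any $n$-dimensional continuous semimartingale $X$ the process $N(X)$ is itself a continuous semimartingale (by the multidimensional It\^o--Tanaka formula for convex functions), so the hypotheses of the previous proposition are applicable to the pair $(N_1(X), N_2(X))$. Moreover both $N_1(X)$ and $N_2(X)$ are nonnegative.

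Next, I would verify the two conditions of the previous proposition. For condition $(1)$, since $N_1$ and $N_2$ are norms, one has the pointwise equivalence $N_i(x)=0 \iff x=0$, so
\[
Z(N_1(X))=\{s\le T:\ X_s=0\}=Z(N_2(X)).
\]
In particular $N_1(X)$ and $N_2(X)$ share the same family $(g_n,d_n)$ of excursion intervals around $0$. For condition $(2)$, on each such excursion interval both processes are strictly positive, hence $(N_i(X))^{+}=N_i(X)$; the pointwise inequality $N_1\le N_2$ then yields $N_1(X_s)\le N_2(X_s)$ for all $s$, and taking the supremum over $s\in (g_n,d_n)\cap[0,t]$ gives
\[
M_n^{N_1(X)}(t)\le M_n^{N_2(X)}(t),\qquad n\in\mathbb{N},\ t\le T.
\]

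With both hypotheses in hand, the previous proposition yields $dL_t^0(N_1(X))\ll dL_t^0(N_2(X))$, which is what the corollary asserts (the inequality in the statement being interpreted as absolute continuity, consistent with the formulation of the proposition it specialises). I expect no real obstacle beyond recalling that norms, being convex Lipschitz functions, preserve the semimartingale property; the rest is a bookkeeping verification that the excursion structure is unchanged and the excursion heights are ordered in the required way.
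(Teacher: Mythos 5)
Your proposal is correct and follows exactly the route the paper intends: the corollary is stated as an immediate consequence of the preceding proposition, and your verification that $Z(N_1(X))=Z(N_2(X))$ (norms vanish only at the origin) and $M_n^{N_1(X)}(t)\le M_n^{N_2(X)}(t)$ (from $N_1\le N_2$) is precisely the required specialisation. The only minor remark is that the corollary asserts the measure domination $dL_t^0(N_1(X))\le dL_t^0(N_2(X))$, which is what the proposition's balayage argument actually delivers (via $\int_0^t k_s\,dL_s^0(N_1(X))\le\int_0^t k_s\,dL_s^0(N_2(X))$ for all nonnegative predictable $k$), slightly stronger than the absolute continuity you cite.
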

\section{Càdlàg case}\label{cad case}
In \cite{coquet}, F. Coquet and Y. Ouknine give a new comparison theorem for local times concerning essentially càdlàg semimartingales. In this section, we focus our attention to extend this result to the measures associated to this local times.\\
We first introduce some notations which will be used in this section.\\
Let $M$ be a random predictable set, closed on the right containing $\{0\}$.
  For $t \geq 0$, we define
$$\gamma_{t}=sup\{s \leq t,\; s\in M\}.$$
Before we give statements of our main results, we recall this definition:
\begin{defn}
Let $Z$ be a semimartingale, we define the local time of $Z$, at $0$ as the continuous adapted increasing process $L^{0}(Z)$ satisfying
$$|Z_{t}|=|Z_{0}|+\int_{0}^{t}sgn(Z_{s^{-}})\;dZ_{s}+2 \sum_{0<s\leq t}\;[Z_{s}^{-}1_{\{Z_{s^{-}}>0\}}+Z_{s}^{+}1_{\{Z_{s^{-}}\leq 0\}}]+L_{t}^{0}(Z),$$
where we set $sgn(0)=-1$.
\end{defn}

The  balayage formula for càdlàg semimartingale is essential for the proof of our main result.
\begin{prop}
Let $(X_{t})_{t \geq 0}$ be a right continuous process such that $X$ is null on the $M$, and $k$ be bounded and predictable process, $(k_{\gamma_{t}}X_{t})_{t \geq 0}$ is a right continuous semimartingale and
$$k_{\gamma_{t}}X_{t}=k_{0}X_{0}+ \int_{0}^{t}k_{\gamma_{s}}\;dX_{s}.$$
Moreover,
$$L_{t}^{0}(k_{\gamma_{.}}X_{.})=\int_{0}^{t}|k_{s}|\;dL_{s}^{0}(X).$$
\end{prop}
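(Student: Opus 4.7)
The plan is to mirror the balayage argument used in the continuous case: first establish that $Y:=k_{\gamma_\cdot}X_\cdot$ is a c\`adl\`ag semimartingale with differential $dY_s=k_{\gamma_s}\,dX_s$, and then derive the local-time identity by comparing the c\`adl\`ag Tanaka decomposition (in the sense of the definition preceding the proposition) of $Y$ with that of $X$.

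For the integration formula, I would use a monotone class argument on the predictable integrand $k$. It is enough to check the identity for elementary integrands of the form $k_s=H\,\mathbf{1}_{(S,T]}(s)$, with $S\leq T$ stopping times and $H$ a bounded $\mathcal{F}_S$-measurable random variable. For such $k$, one has $k_{\gamma_s}=H\,\mathbf{1}_{\{S<\gamma_s\leq T\}}$. Introducing the first-hitting times $\tau_U:=\inf\{u>U:\,u\in M\}$, the right-closedness of $M$ forces $\tau_U\in M$, so by hypothesis $X_{\tau_U}=0$. This lets one rewrite $k_{\gamma_s}X_s=H\,X_s\,\mathbf{1}_{[\tau_S,\tau_T)}(s)$, and a direct verification using the vanishing of $X$ at $\tau_S$ and $\tau_T$ yields the identity on the elementary class. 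A standard monotone-class step, combined with the dominated-convergence theorem for stochastic integrals, then extends it to all bounded predictable $k$, and simultaneously certifies that $Y$ is a c\`adl\`ag semimartingale.

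For the local-time part, I apply the c\`adl\`ag Tanaka formula of the definition to $Y$, substitute the relation $dY_s=k_{\gamma_s}\,dX_s$ obtained above, and use the identity $\mathrm{sgn}(Y_{s-})\,k_{\gamma_s}=|k_{\gamma_s}|\,\mathrm{sgn}(X_{s-})$ (valid off the negligible set where $k$ changes sign). This yields
\begin{align*}
|Y_t|=|Y_0|+\int_0^t|k_{\gamma_s}|\,\mathrm{sgn}(X_{s-})\,dX_s+2\!\!\sum_{0<s\leq t}\!\!\bigl[Y_s^-\mathbf{1}_{\{Y_{s-}>0\}}+Y_s^+\mathbf{1}_{\{Y_{s-}\leq 0\}}\bigr]+L_t^0(Y).
\end{align*}
Comparing this with the Tanaka decomposition of $X$ integrated pathwise against $|k_{\gamma_\cdot}|$, the stochastic integral and the jump sums cancel term by term, leaving $L_t^0(Y)=\int_0^t|k_{\gamma_s}|\,dL_s^0(X)$. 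Since $dL^0(X)$ is carried by $\{s:X_{s-}=0\}\subset M$, one has $\gamma_s=s$ on its support, so $|k_{\gamma_s}|=|k_s|$ there, recovering the stated form $\int_0^t|k_s|\,dL_s^0(X)$.

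The main obstacle is the clean bookkeeping of jumps. The process $\gamma_\cdot$ itself jumps whenever $t$ enters $M$, and $k_\gamma$ inherits these jumps independently of those of $X$; at each such instant, however, $t\in M$ forces $X_t=0$, so the jump $\Delta(k_\gamma X)_t=k_{\gamma_{t-}}\Delta X_t+\Delta k_{\gamma_t}\,X_t$ collapses to $k_{\gamma_{t-}}\Delta X_t$. Tracking this cancellation consistently through the c\`adl\`ag Tanaka formula, together with the identification of $|k_s|$ and $|k_{\gamma_s}|$ on the support of $dL^0(X)$ (which ultimately rests on the right-closedness of $M$ and the vanishing of $X$ there), is the delicate technical point.
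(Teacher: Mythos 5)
A preliminary remark: the paper offers no proof of this proposition at all --- it is recalled as the known balayage formula for c\`adl\`ag semimartingales (Az\'ema--Yor, Meyer--Stricker--Yor), so there is no in-paper argument to measure yours against. Your overall strategy --- a monotone class argument over elementary predictable integrands for the integral identity, then the c\`adl\`ag Tanaka formula for the local-time identity --- is the standard way such a formula is actually proved, so the plan itself is reasonable.

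There is, however, a genuine gap at exactly the point you call delicate. For $k=H\,1_{(S,T]}$ you correctly find $k_{\gamma_s}=H\,1_{[\tau_S,\tau_T)}(s)$ --- note the interval is \emph{closed} at $\tau_S$, since $\gamma_{\tau_S}=\tau_S>S$. Hence $\int_0^t k_{\gamma_s}\,dX_s$ picks up the jump of $X$ at the entrance time $\tau_S$ and telescopes to $H\,(X_t-X_{\tau_S^-})$ on $[\tau_S,\tau_T)$. The hypothesis gives $X_{\tau_S}=0$ but says nothing about the left limit $X_{\tau_S^-}$, so the term $-H\,X_{\tau_S^-}$ survives and your ``direct verification'' does not close. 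Concretely, with $M=\{0\}\cup[1,\infty)$, $X_t=t\,1_{(0,1)}(t)$ and $k=1_{(0,2]}$, one gets $k_{\gamma_t}X_t\equiv 0$ while $\int_0^{3/2}k_{\gamma_s}\,dX_s=\Delta X_1=-1$. The cure is to put the left-continuous version $k_{\gamma_{s^-}}$ (with $\gamma_{s^-}=\sup\{u<s:\,u\in M\}$) inside the integral, which replaces $[\tau_S,\cdot)$ by stochastic intervals of the form $(\tau_S,\cdot]$ and lets the telescoping close using only $X_\tau=0$ for $\tau\in M$. This is in fact what your own jump computation at the end demands: you show $\Delta(k_{\gamma}X)_t=k_{\gamma_{t^-}}\Delta X_t$, i.e.\ the integrand at a jump must be the left limit, which contradicts the closed-at-$\tau_S$ integrand produced by your elementary computation. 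A secondary weakness sits in the Tanaka step: $Y_{s^-}=k_{\gamma_{s^-}}X_{s^-}$ involves $k_{\gamma_{s^-}}$ and not $k_{\gamma_s}$, the convention $\mathrm{sgn}(0)=-1$ destroys multiplicativity of $\mathrm{sgn}$ at zeros, and ``the negligible set where $k$ changes sign'' is not a meaningful restriction for a general bounded predictable $k$; the sign bookkeeping has to be carried out separately on the support of $dL^0(X)$ (where $\gamma_s=s$) and on the jump sum.
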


Owing to Tanaka formula for càdlàg semimartingales, it is natural to establish our main result in discontinuous case under new hypothesis.
\begin{thm}
Let $X$ and $Y$  be two càdlàg semimartingales such that
\begin{enumerate}
  \item [(1)]$\{X=X_{-}=0\}\subset\{Y=Y_{-}=0\},$
  \item [(2)]$X^{+}\leq Y^{+}$.
  \end{enumerate}
  Then,
  $$dL_{t}^{0}(X)\ll dL_{t}^{0}(Y) \;\;\;\;  \mbox{for all}\; t>0.$$
  And there exists a predictable process $(\theta_{s})_{s\geq 0}$, $\theta_{s} \in [0,1]$ such that:
  $$L_{t}^{0}(X)=\int_{0}^{t}\theta_{s}\;dL_{s}^{0}(Y)$$
\end{thm}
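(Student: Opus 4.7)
The plan is to imitate the structure of the proof of Theorem~\ref{positif} from the continuous case, replacing each ingredient by its càdlàg analogue: the Azéma--Yor balayage formula stated just above in c\`adl\`ag form, and the pointwise comparison theorem of Coquet--Ouknine \cite{coquet} (instead of the original \cite{Ouk Nakao} result). The outcome should again be that testing $dL^{0}(X)$ against an arbitrary non-negative predictable process reduces, via balayage, to a pointwise comparison of local times of two auxiliary processes.

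More concretely, I would proceed as follows. First, let $M=\{s\ge 0:Y_{s}=Y_{s-}=0\}$. By hypothesis~(1), $M$ also contains $\{X=X_{-}=0\}$, and $M$ is closed on the right and predictable, so the section $\gamma_{t}=\sup\{s\le t:\,s\in M\}$ is well defined and the c\`adl\`ag balayage formula applies with this $\gamma$. Given an arbitrary bounded non-negative predictable process $k$, set
$$\widetilde X_{t}=k_{\gamma_{t}}X_{t},\qquad \widetilde Y_{t}=k_{\gamma_{t}}Y_{t}.$$
From $k\ge 0$ and $X^{+}\le Y^{+}$ we immediately get $\widetilde X^{+}\le\widetilde Y^{+}$, and one checks that
$$\{\widetilde X=\widetilde X_{-}=0\}\subset\{\widetilde Y=\widetilde Y_{-}=0\}$$
because a zero of $\widetilde X$ (simultaneously with its left limit) forces either $\gamma_{t}=t$ (hence $t\in M$, so $Y_{t}=Y_{t-}=0$) or the vanishing of $k_{\gamma_{t}}$, which then annihilates $\widetilde Y$ as well. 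At that point, the Coquet--Ouknine comparison theorem for càdlàg local times \cite{coquet} yields the pointwise inequality $L_{t}^{0}(\widetilde X)\le L_{t}^{0}(\widetilde Y)$. Invoking the balayage formula (the "moreover" part quoted just above) on each side,
$$\int_{0}^{t}k_{s}\,dL_{s}^{0}(X)=L_{t}^{0}(\widetilde X)\le L_{t}^{0}(\widetilde Y)=\int_{0}^{t}k_{s}\,dL_{s}^{0}(Y),$$
which, since $k$ is an arbitrary non-negative bounded predictable process, shows $dL_{t}^{0}(X)\ll dL_{t}^{0}(Y)$. The existence of a predictable density $\theta$ taking values in $[0,1]$ then follows from the Radon--Nikodym theorem together with the fact that one may take $k=\mathbf 1_{\{\theta>1\}}$ in the estimate above to rule out $\theta>1$ on a set of positive $dL^{0}(Y)$-measure.

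The routine manipulations are mostly identical to the continuous case; the main obstacle I anticipate is the verification that the set-inclusion hypothesis lifts from $(X,Y)$ to $(\widetilde X,\widetilde Y)$ in the strong sense $\{\cdot=\cdot_{-}=0\}$ demanded by the c\`adl\`ag comparison theorem. The difficulty is that $k_{\gamma}$ is in general neither left- nor right-continuous, so $\widetilde X$ can pick up jumps from three sources ($X$, $k$, and the jumps of $s\mapsto\gamma_{s}$ at times when excursions end), and one must rule out a configuration in which a jump of $\widetilde X$ causes $\widetilde X_{t}=\widetilde X_{t-}=0$ without $\widetilde Y$ sharing the same property. Handling this should boil down to a short case analysis: either $\gamma_{t}<t$, in which case $X$ does not vanish with its left limit near $t$ (so $\widetilde X$ can vanish only through $k$, which forces $\widetilde Y=0$ too), or $\gamma_{t}=t$, which places us in $M$ and directly in the zero-set of $Y$.
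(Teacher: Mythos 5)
Your proposal follows the same overall architecture as the paper's proof: reduce everything to a pointwise comparison of the local times of the balayaged processes $k_{\gamma}X$ and $k_{\gamma}Y$, then read off the measure inequality from the identity $L^{0}(k_{\gamma_{\cdot}}X_{\cdot})=\int k\,dL^{0}(X)$ and conclude by Radon--Nikodym. The one substantive difference is how the pointwise step is handled: you invoke the Coquet--Ouknine comparison theorem \cite{coquet} as a black box, whereas the paper rederives the inequality $L^{0}_{t}(X)\le L^{0}_{t}(Y)$ from scratch by writing $\tfrac{1}{2}L^{0}_{t}(X)$ as the continuous part of $\int_{0}^{t}1_{\{X_{s^{-}}=X_{s}=0\}}\,dX_{s}$ and showing that $\int_{0}^{t}1_{\{X_{s^{-}}=X_{s}=0\}}\,d(Y-X)_{s}\ge 0$ via a decomposition into a local-time term plus a nonnegative sum of jumps; that self-contained derivation is where most of the paper's work lies, so citing \cite{coquet} buys you brevity at the cost of making the theorem depend on the very result it is meant to strengthen. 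Two smaller points. First, the paper begins with the reduction to $X,Y\ge 0$ via $L^{0}(Z)=L^{0}(Z^{+})$, and you should too: without it, $X$ need not vanish on your set $M=\{Y=Y_{-}=0\}$, and the c\`adl\`ag balayage formula you quote requires the process to be null on $M$; after the reduction the sets $\{X=X_{-}=0\}$ and $\{Y=Y_{-}=0\}$ coincide, so your choice of $M$ and the paper's choice $\{X_{s^{-}}=X_{s}=0\}$ agree, and the case analysis you sketch for lifting hypothesis (1) to $(\widetilde X,\widetilde Y)$ then goes through (and is in fact more careful than the paper, which simply asserts the transfer). Second, your remark that taking $k=1_{\{\theta>1\}}$ forces $\theta\in[0,1]$ makes explicit a step the paper leaves to the reader.
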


\begin{proof}[Proof of theorem]
 Without loss of generality, it suffices to consider the case where $X$ and $Y$ are non-negative semimartingales. Let $CP(X)$ denotes the continuous part of a process $X$. It is well known that
\begin{eqnarray*}
\frac{1}{2}L_{t}^{0}(X)&=&CP\left[\int_{0}^{t}1_{\{X_{s^{-}}=0\}}\;dX_{s}\right]\\
&=&CP\left[\int_{0}^{t}1_{\{X_{s^{-}}=X_{s}=0\}}\;dX_{s}\right]
\end{eqnarray*}
the process $V_{t}=\int_{0}^{t}1_{\{X_{s^{-}}=0\}}\;dX_{s}$ is of bounded variation, so this integral
$$\int_{0}^{t}1_{\{X_{s^{-}}=X_{s}=0\}}\;dX_{s}$$
 is well defined.
 And we have:
\begin{eqnarray*}
\frac{1}{2}L_{t}^{0}(X)
&=& CP\left[\int_{0}^{t}1_{\{X_{s}=X_{s^{-}}=0\}}\;dY_{s}-\int_{0}^{t}1_{\{X_{s}=X_{s^{-}}=0\}}\;d(Y_{s}-X_{s})\right],\\
\end{eqnarray*}
see \cite{Sup} for more details.
Using this assumption:
$$\{X=X_{-}=0\}\subset\{Y=Y_{-}=0\},$$
we obtain this inequality:
\begin{eqnarray*}
\frac{1}{2}L_{t}^{0}(X)&\leq &CP\left[\int_{0}^{t}1_{\{Y_{s}=Y_{s^{-}}=0\}}\;dY_{s}\right]-CP\left[\int_{0}^{t}1_{\{X_{s}=X_{s^{-}}=0\}}\;d(Y-X)_{s}\right],
\end{eqnarray*}
and the same assumption allows us to write:
\begin{eqnarray*}
\int_{0}^{t}1_{\{X_{s}=X_{s^{-}}=0\}}\;d(Y-X)_{s}&=& \int_{0}^{t}1_{\{X_{s}=X_{s^{-}}=0\}}1_{\{Y_{s^{-}}-X_{s^{-}}=0\}}\;d(Y-X)_{s}\\
&=&\frac{1}{2}\int_{0}^{t}1_{\{X_{s^{-}}=X_{s}=0\}}\;dL_{s}^{0}(Y-X)+\int_{0}^{t}1_{\{X_{s^{-}}=X_{s}=0\}}\;dA_{s}
\end{eqnarray*}
where
$$A_{t}= \sum_{s\leq t}1_{\{Y_{s^{-}}\leq X_{s^{-}}\}}(Y_{s}-X_{s})^{+}+1_{\{Y_{s^{-}}> X_{s^{-}}\}}(Y_{s}-X_{s})^{-}.$$
Therefore, $\int_{0}^{t}1_{\{X_{s^{-}}=X_{s}=0\}}\;d(Y-X)_{s}\geq 0$, this clearly forces:
$$\frac{1}{2}L_{t}^{0}(X)\leq  CP\left(\int_{0}^{t}1_{\{Y_{s^{-}}=0\}}\;dY_{s}\right),$$
which establishes the following formula:
$$L_{t}^{0}(X)\leq L_{t}^{0}(Y).$$
To complete the proof, we set
$$M=\{X_{s^{-}}=X_{s}=0\}.$$
Using the balayage formula we can define the processes:
$$\widetilde{X}_{t}=k_{\gamma_{t}}X_{t}=\int_{0}^{t}k_{\gamma_{s}}\;dX_{s}\;\;\;\;\mbox{and}\;\;\;\;\widetilde{Y}_{t}=k_{\gamma_{t}}Y_{t}=\int_{0}^{t}k_{\gamma_{s}}\;dY_{s}$$
which are càdlàg semimartingales satisfying the assumptions
$(1)$ and $(2)$.\\
Consequently,
$$L_{t}^{0}(\widetilde{X})\leq L_{t}^{0}(\widetilde{Y}).$$
Therefore, we obtain
$$\int_{0}^{t}k_{s}\;dL_{s}^{0}(X) \leq \int_{0}^{t}k_{s}\;dL_{s}^{0}(Y),$$
Thus, the result follows by a particular choice of the process $k$.
\end{proof}
\section{Application to SDE }\label{Application to SDE}
\subsection{Pathwise uniqueness for some reflected SDE's}\label{SDE local}
We deal this section by recalling briefly some type of uniqueness of solution to stochastic differential equation. For more details about this  notion, we refer the reader to \cite{Revuz}.\\
We consider the following stochastic differential equation:
$$X_{t}=X_{0}+ \int_{0}^{t} \sigma(X_{s})\;dB_{s}+\int_{0}^{t} b(X_{s})\;ds, \;\;\;\;\;e(\sigma,b)$$
where $X_{0}$ is a random variable and $b$, $\sigma$ are two Borel functions.\\ We suppose that, $\forall t \geq 0$,
$$\int_{0}^{t} \sigma^{2}(X_{s})\;ds< +\infty\;,\;\;\;\;\;\; \int_{0}^{t} |b(X_{s})|\;ds< +\infty.$$
We associate to $e(\sigma,b)$ another equation $e^{'}(\sigma,b)$ by addition of local time component:
\begin{equation*}
\label{eql}
\left\{
\begin{array}{lll}
dY_{t}=\sigma(Y_{t})\;dB_{t}+b(Y_{t})\;dt+\frac{1}{2}dL_{t}^{0}(Y)\;\;\;\;\;\;\;e^{'}(\sigma,b)\\
Y_{t} \geq 0 \\
\end{array}
\right.
\end{equation*}
\begin{defn}
\begin{enumerate}
\item A solution of the SDE $e(\sigma,b)$ (or $e^{'}(\sigma,b)$) is a pair $(X,B)$ of adapted processes defined on a probability space $(\Omega, (\mathcal{F}_{t})_{t \geq 0},\mathbb{P})$ such that
\begin{itemize}
  \item $B$ is a standard  $\mathcal{F}$-Brownian motion in $\mathbb{R}$,
  \item X satisfies $e(\sigma,b)$ or $e^{'}(\sigma,b)$ and the conditions above.
\end{itemize}
\item A  solution $X$ of the SDE $e(\sigma,b)$ or $e^{'}(\sigma,b)$ is said to be trivial if
$$\mathbb{P}(\{X_{t}=X_{0},\;\; \forall t \geq 0\})=1.$$
\end{enumerate}
\end{defn}
In Proposition $3.2$, Chap. IX of Ref. \cite{Revuz}, it is shown that if uniqueness in law holds for $e(\sigma,b)$ and if the local time $L^{0}(X^{1}-X^{2})=0$ for any pair of solutions such that $X_{0}^{1}=X_{0}^{2}$ a.s., then the pathwise uniqueness holds for $e(\sigma,b)$.\\
The analogue of this result for the SDE $e^{'}(\sigma,b)$ is given by this theorem:
\begin{thm}\label{prin}
The two following properties are equivalent:
\begin{enumerate}
  \item There is pathwise uniqueness for equation $e^{'}(\sigma,b)$.
  \item There is uniqueness in law for equation $e^{'}(\sigma,b)$ and whenever $(X,B)$ and $(Y,B)$ are two solutions such that $X_{0}=Y_{0}$ a.s.,$\;dL_{s}^{0}(X-Y)$ is carried by the set $\{s,\; X_{s}=Y_{s}=0\}$.
\end{enumerate}
\end{thm}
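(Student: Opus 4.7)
Direction $(1)\Rightarrow(2)$ is immediate: pathwise uniqueness forces $X = Y$ a.s.\ for any two solutions sharing the Brownian motion and initial condition, so uniqueness in law holds and $L^{0}(X-Y) \equiv 0$ has its associated measure trivially carried by any set. For the converse, the plan is to adapt the proof of Proposition~IX.3.2 of Revuz--Yor to the reflected setting. Let $(X,B)$ and $(Y,B)$ be two solutions of $e'(\sigma,b)$ with $X_{0} = Y_{0}$. The crux is to show that $X\vee Y$ is again a solution of $e'(\sigma,b)$. Once this is established, uniqueness in law yields $X\vee Y \stackrel{d}{=} X$, and combined with the pointwise inequality $X\vee Y \geq X$ and matching expectations (after localization if needed), one concludes $X\vee Y = X$ almost surely, hence $Y \leq X$; the symmetric argument gives $X \leq Y$, so $X = Y$ a.s.

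To prove the claim I would write $X\vee Y = Y + (X-Y)^{+}$ and apply Tanaka's formula to $(X-Y)^{+}$, substituting
\begin{equation*}
d(X-Y) = [\sigma(X)-\sigma(Y)]\,dB + [b(X)-b(Y)]\,ds + \tfrac{1}{2}[dL^{0}(X) - dL^{0}(Y)].
\end{equation*}
Three local-time reductions then assemble $X\vee Y$ into a solution of $e'(\sigma,b)$: (a) $1_{\{X > Y\}}\,dL^{0}(X) = 0$, because on $\{X > Y\}$ the non-negativity $Y \geq 0$ forces $X > 0$, off the support $\{X=0\}$ of $dL^{0}(X)$; (b) combining the $\tfrac{1}{2}dL^{0}(Y)$ from the equation for $Y$ with the $-\tfrac{1}{2}\,1_{\{X > Y\}}\,dL^{0}(Y)$ produced by Tanaka leaves $\tfrac{1}{2}\,1_{\{X = Y = 0\}}\,dL^{0}(Y)$; (c) by hypothesis~(2), $dL^{0}(X-Y)$ is concentrated on $\{X = Y = 0\}$, the common zero set of $X\vee Y$. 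A direct Tanaka expansion of $X\vee Y$ (using $(X\vee Y)^{+} = X\vee Y$) together with the occupation-times formula, which implies that $\{X = Y = 0\}$ has zero Lebesgue measure a.s.\ and hence annihilates the stochastic and drift contributions there, yields the key identification
\begin{equation*}
L^{0}(X\vee Y)_{t} = L^{0}(X-Y)_{t} + \int_{0}^{t} 1_{\{X_{s}=Y_{s}=0\}}\,dL^{0}_{s}(Y).
\end{equation*}

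The main obstacle is step~(c): this local-time identity is delicate and depends critically on hypothesis~(2) restricting $dL^{0}(X-Y)$ to $\{X = Y = 0\}$. Without this restriction, an extraneous local-time contribution at positive common values of $X$ and $Y$ would appear which could not be absorbed into $\tfrac{1}{2}L^{0}(X\vee Y)$, and $X\vee Y$ would fail to solve $e'(\sigma,b)$. A secondary technical point is the integrability required to pass from $X\vee Y \stackrel{d}{=} X$ and $X\vee Y \geq X$ to the almost-sure equality, handled by standard localization.
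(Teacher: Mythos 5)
Your overall strategy coincides with the paper's: the nontrivial direction is handled by showing that $X\vee Y$ is again a solution of $e'(\sigma,b)$, via Tanaka's formula applied to $(X-Y)^{+}$, the support argument that kills $1_{\{X>Y\}}\,dL^{0}(X)$ and reduces $1_{\{X\le Y\}}\,dL^{0}(Y)$ to $1_{\{X=Y=0\}}\,dL^{0}(Y)$, and hypothesis (2) to confine $dL^{0}(X-Y)$ to $\{X=Y=0\}$; the conclusion then follows from uniqueness in law and an expectation comparison. One harmless divergence: you close using only the supremum ($X\vee Y\ge X$ with the same law forces $X\vee Y=X$, then symmetry), whereas the paper also constructs $X\wedge Y$ and writes $\mathbb{E}[|X_{t}-Y_{t}|]=\mathbb{E}[X_{t}\vee Y_{t}]-\mathbb{E}[X_{t}\wedge Y_{t}]=0$; both work, and yours is marginally more economical.

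There is, however, a genuine gap in your step (c). For the key identification of $L^{0}(X\vee Y)$ you propose a direct Tanaka expansion of $X\vee Y$ together with the claim that the occupation-times formula implies $\{X=Y=0\}$ has zero Lebesgue measure a.s. That implication does not hold in general: the occupation-times formula only gives $\int_{0}^{t}1_{\{X_{s}=0\}}\sigma^{2}(X_{s})\,ds=0$, which yields $\mathrm{Leb}\{s\le t:\,X_{s}=0\}=0$ only when $\sigma(0)\neq 0$. If $\sigma(0)=0$ and $b(0)\neq 0$, the drift contribution $b(0)\,\mathrm{Leb}\{s\le t:\,X_{s}=Y_{s}=0\}$ appearing in $\int_{0}^{t}1_{\{X_{s}\vee Y_{s}=0\}}\,d(X\vee Y)_{s}$ is not controlled, and your identity
\begin{equation*}
L^{0}(X\vee Y)_{t} = L^{0}(X-Y)_{t} + \int_{0}^{t} 1_{\{X_{s}=Y_{s}=0\}}\,dL^{0}_{s}(Y)
\end{equation*}
does not follow. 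The paper avoids this entirely by invoking the Ouknine--Rutkowski formula for the local time of the supremum of two continuous semimartingales, which is a pathwise identity requiring no non-degeneracy of $\sigma$; under $X,Y\ge 0$ and hypothesis (2) it reduces to exactly what is needed. You should either cite that formula, as the paper does, or add an assumption such as $\sigma(0)\neq 0$ or $b(0)=0$ under which your direct derivation is valid.
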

\begin{proof}
\\
$(1)\Rightarrow(2)$. Trivial.\\
$(2)\Rightarrow(1)$. We will prove that whenever $X$ and $Y$ are solutions of $e^{'}(\sigma,b)$, $X\vee Y$ and $X\wedge Y$ are also solutions.\\
By Tanaka's formula one has:
\begin{eqnarray*}
(X_{t}\vee Y_{t})&=&Y_{t}+ (X_{t}-Y_{t})^{+}\\
&=& Y_{t}+\left(\int_{0}^{t}1_{\{X_{s}>Y_{s}\}}\;d(X_{s}-Y_{s})+\frac{1}{2}L_{t}^{0}(X-Y)\right)\\
&=&\int_{0}^{t} \sigma(X_{s}\vee Y_{s})\;dB_{s}+ \int_{0}^{t}b(X_{s}\vee Y_{s})\;ds\\
&+& \frac{1}{2}\int_{0}^{t}1_{\{X_{s}>Y_{s}\}}\;dL_{s}^{0}(X)+\frac{1}{2}\int_{0}^{t}1_{\{X_{s}\leq Y_{s}\}}\;dL_{s}^{0}(Y)+ \frac{1}{2}L_{t}^{0}(X-Y).
%&+& \frac{1}{2}\left[\int_{0}^{t}1_{\{X_{s} <0\}} dL_{s}^{0}(Y)+\int_{0}^{t}1_{\{Y_{s} \leq 0\}}dL_{s}^{0}(X)+\int_{0}^{t}1_{\{X_{s}=Y_{s}= 0\}}dL_{s}^{0}(Y-X)\right]
\end{eqnarray*}
Observe that by a support argument for local time we have: $\forall t \geq 0$,
$$\int_{0}^{t}1_{\{X_{s}>Y_{s}\}}\;dL_{s}^{0}(X)=\int_{0}^{t}1_{\{0>Y_{s}\}}\;dL_{s}^{0}(X)\;\;\;\mbox{and} \;\;\;\int_{0}^{t}1_{\{X_{s}\leq Y_{s}\}}\;dL_{s}^{0}(Y)=\int_{0}^{t}1_{\{X_{s}\leq 0\}}\;dL_{s}^{0}(Y).$$
Since the measure $dL_{s}^{0}(X-Y)$ is carried by the set $\{s, \;X_{s}=Y_{s}=0\}$, $L_{t}^{0}(X-Y)$ can be expressed as:
$$L_{t}^{0}(X-Y)=\int_{0}^{t}1_{\{X_{s}=Y_{s}=0\}}\;dL_{s}^{0}(X-Y).$$
Now from a result by Ouknine  and Rutkowski \cite{Rutkowski}, in which they prove the local time of sup of two semimartingales:
$$L_{t}^{0}(X\vee Y)=\int_{0}^{t}1_{\{X_{s} < 0\}} dL_{s}^{0}(Y)+\int_{0}^{t}1_{\{Y_{s}\leq  0\}}dL_{s}^{0}(X)+ \int_{0}^{t}1_{\{X_{s}=Y_{s}= 0\}}dL_{s}^{0}(Y-X). $$
Hence
$X\vee Y$ is a solution to $e{'}(\sigma,b)$. Similarly we can show that $X\wedge Y$ is a solution as well.
As $X$ and $Y$ have integrable paths on finite time interval, then
\begin{eqnarray*}
\mathbb{E}[|X_{t}- Y_{t}|]&=&\mathbb{E}[X_{t}\vee Y_{t}]-\mathbb{E}[X_{t}\wedge Y_{t}]
\end{eqnarray*}
and by uniqueness in law, we obtain:
$$\mathbb{E}[|X_{t}- Y_{t}|]=0,$$
then, $X$ and $Y$ are indistinguishable.
\end{proof}
\\

In the next step we show that under some hypothesis on the diffusion coefficient $\sigma$, we state some sufficient conditions for the pathwise uniqueness of the equation $e^{'}(\sigma,b)$. Let us first recall an important result of Ouknine \cite{Ident}:
\begin{prop}
Suppose $\sigma$ and $b$ are two bounded borel functions.
If $\sigma$ verifies the following condition:
\begin{enumerate}
  \item  $|\sigma|\geq \varepsilon >0$
  \item there exists $n \in 2 \mathbb{N}$ such that
$$|x^{n}\sigma(x)-y^{n}\sigma(y)|^{2}\leq \rho (|h(x)-h(y)|),\;\;\mbox{for all}\; x, y \in \mathbb{R}$$
where $\rho:\left[ 0,\infty \right)\rightarrow \left[ 0,\infty \right)$ is continuous and nondecreasing, $\rho(0)=0,\\\rho(x)>0 $ for $x>0$, and
$\rho(\alpha x)\leq  \alpha \rho(x)\;\;$  $\forall \alpha >1\;$,  $\forall x>0$, and
\[
  \int^{\epsilon}_{0}\frac{du}{\rho(u)}=+\infty \mbox{ for some}\;\epsilon >0,
\]
\end{enumerate}
then we have pathwise uniqueness for equation $e^{'}(\sigma,b)$.

\end{prop}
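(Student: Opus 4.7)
My plan is to verify the two conditions of Theorem~\ref{prin}, so that pathwise uniqueness for $e'(\sigma,b)$ follows at once. For two solutions $(X,B)$ and $(Y,B)$ sharing the same initial condition, I need \textbf{(i)} uniqueness in law, and \textbf{(ii)} that $dL^{0}_{s}(X-Y)$ is carried by $\{s:\,X_{s}=Y_{s}=0\}$.

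For (i), since $|\sigma|\geq\varepsilon$ and $\sigma,b$ are bounded, the ratio $b/\sigma$ is bounded, so Girsanov's transform removes the drift and the problem reduces to the driftless reflected SDE. For that reduced equation, Engelbert--Schmidt theory combined with the Skorokhod pathwise reflection map yields uniqueness in law; transferring back under the inverse change of measure gives (i).

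For (ii), the core idea is to introduce $U_{t}:=X_{t}^{n+1}-Y_{t}^{n+1}$. Since $n\in 2\mathbb{N}$, $n+1\geq 3$ is odd, so $x\mapsto x^{n+1}$ is a $C^{2}$-diffeomorphism of $[0,\infty)$; in particular $U$ and $X-Y$ have the same sign and vanish simultaneously. Applying It\^o's formula to $X^{n+1}$ and $Y^{n+1}$, the reflecting local-time contributions disappear because $dL^{0}(X)$ (resp.\ $dL^{0}(Y)$) is supported in $\{X=0\}$ (resp.\ $\{Y=0\}$), where $X^{n}=0$ (resp.\ $Y^{n}=0$). Thus $U$ is a continuous semimartingale whose martingale part has quadratic variation
\[
d\langle U\rangle_{t}=(n+1)^{2}\bigl(X_{t}^{n}\sigma(X_{t})-Y_{t}^{n}\sigma(Y_{t})\bigr)^{2}dt\leq (n+1)^{2}\rho(|h(X_{t})-h(Y_{t})|)\,dt,
\]
by hypothesis (2), while its bounded-variation part is bounded thanks to the boundedness of $\sigma$ and $b$.

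Finally, I run the Yamada--Watanabe--Nakao approximation on $U$: pick $a_{k}\searrow 0$ with $\int_{a_{k}}^{a_{k-1}}du/\rho(u)=k$ (possible by divergence of the integral at $0^{+}$), and smooth convex approximations $\phi_{k}(u)\nearrow |u|$ with $0\leq \phi_{k}''(u)\leq \frac{2}{k\rho(|u|)}\mathbf{1}_{[a_{k},a_{k-1}]}(|u|)$. Apply It\^o to $\phi_{k}(U_{t})$, take expectations after localisation, and use the subhomogeneity $\rho(\alpha x)\leq \alpha\rho(x)$ to compare $\rho(|h(X)-h(Y)|)$ with $\rho(|U|)$; this bounds the quadratic-variation contribution by $O(1/k)$, while the bounded drift is controlled via Gronwall's inequality. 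Letting $k\to\infty$ gives $\E|U_{t}|=0$, i.e.\ $X\equiv Y$, which trivially implies (ii). The \emph{main obstacle} I anticipate is precisely this last comparison between $\rho(|h(X)-h(Y)|)$ and $\rho(|U|)$: the argument hinges on how cleanly the function $h$ from Ouknine~\cite{Ident} relates to the map $x\mapsto x^{n+1}$, and on exploiting the subhomogeneity of $\rho$. Once this link is secured, the rest of the Yamada--Watanabe machinery proceeds in a routine fashion.
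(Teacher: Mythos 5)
First, note that the paper does not prove this proposition at all: it is recalled verbatim from Ouknine \cite{Ident}, and the only proof actually written out is that of the weaker-hypothesis analogue, Proposition \ref{propn}. Your framing --- verify the two conditions of Theorem \ref{prin} --- is the right one, and your point (i) is unproblematic. The trouble is in (ii). You set $U=X^{n+1}-Y^{n+1}$ (the right process: the reflecting terms do vanish against $X^{n}$, $Y^{n}$) but then try to close the argument with the full Yamada--Watanabe convex-approximation-plus-Gronwall scheme to conclude $\E|U_{t}|=0$, i.e.\ $X\equiv Y$ outright. That cannot work here, for two reasons. First, the Gronwall step requires the finite-variation part of $U$, namely $(n+1)\int(X^{n}b(X)-Y^{n}b(Y))\,ds+\tfrac{n(n+1)}{2}\int(X^{n-1}\sigma^{2}(X)-Y^{n-1}\sigma^{2}(Y))\,ds$, to be dominated by $C\int|U_{s}|\,ds$; but $b$ and $\sigma$ are only bounded Borel, with no modulus of continuity, so $\E\bigl[\int\phi_{k}'(U_{s})\,dU_{s}^{bv}\bigr]\leq C\int\E|U_{s}|\,ds$ fails. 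Second, the paper's own Remark following the proposition warns that this is precisely what Ouknine could \emph{not} do: one cannot show $L^{0}(X-Y)=0$ (equivalently $X=Y$ by a direct argument); one only obtains $\int_{0}^{t}(X_{s}^{2p}+Y_{s}^{2p})\,dL_{s}^{0}(X-Y)=0$, which is exactly why the detour through Theorem \ref{prin} is needed. The comparison of $\rho(|h(X)-h(Y)|)$ with $\rho(|U|)$ that you flag as your main obstacle is a real issue too (it is trivial only if $h(x)=x^{n+1}$, which is presumably the intent), but it is not the only gap.

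The route the paper takes (in the proof of Proposition \ref{propn}, which transposes directly) avoids expectations and Gronwall entirely. One bounds, pathwise,
\[
\int_{0}^{t}1_{\{U_{s}>0\}}\frac{d\langle U\rangle_{s}}{\rho(U_{s})}\leq (n+1)^{2}\int_{0}^{t}\frac{\rho(|h(X_{s})-h(Y_{s})|)}{\rho(X_{s}^{n+1}-Y_{s}^{n+1})}1_{\{U_{s}>0\}}\,ds\leq Ct,
\]
rewrites the left side via the occupation-times formula as $\int_{0^{+}}L_{t}^{a}(U)\,da/\rho(a)$, and concludes $L_{t}^{0}(U)=0$ from $\int_{0^{+}}du/\rho(u)=+\infty$ and the right-continuity of $a\mapsto L_{t}^{a}(U)$. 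Then, instead of trying to invert $x\mapsto x^{n+1}$, one factors $X^{n+1}-Y^{n+1}=(X-Y)P(X,Y)$ with $P(x,y)=\sum_{k=0}^{n}x^{k}y^{n-k}$ and applies Ouknine's product local-time formula \cite{Sup} to get $0=L_{t}^{0}(U)=\int_{0}^{t}P(X_{s},Y_{s})\,dL_{s}^{0}(X-Y)$; since $X,Y\geq 0$, $P(X,Y)$ vanishes only where $X=Y=0$, so $dL^{0}(X-Y)$ is carried by $\{X=Y=0\}$, and Theorem \ref{prin} finishes the proof. You should replace your Yamada--Watanabe step by this occupation-density argument and the product formula; as written, your proof does not close.
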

\begin{rem}
In $1988$, to prove the pathwise uniqueness of solutions of  $e^{'}(\sigma,b)$, Ouknine \cite{Ouk Nakao} tried to prove that $L_{t}(X-Y)=0$ whenever $X$ and $Y$ are solutions. In this time, he has just proved that
$$\int_{0}^{t}(X_{s}^{2p}+Y_{s}^{2p})\;dL_{s}^{0}(X-Y)=0,$$
which with the Theorem \ref{prin} is sufficiently enough to prove the pathwise uniqueness of $e^{'}(\sigma,b)$.
\end{rem}
We can prove the same result as above but under weaker conditions.
\begin{prop}\label{propn}
If $\sigma$ verify the following conditions
\begin{enumerate}
  \item  $|\sigma|\geq \varepsilon >0$,
  \item there exists $n \in \mathbb{N}$, $c>0$ such that
$$|x^{n-1}\sigma(x)-y^{n-1}\sigma(y)|^{2}\leq c\rho (|x^{n}-y^{n}|)\;\;\;\; \forall x\;,y \in \mathbb{R},$$
where  $\rho:\left[ 0,\infty \right)\rightarrow \left[ 0,\infty \right)$ is continuous and nondecreasing, $\rho(0)=0,\\\rho(x)>0 $ for $x>0$, and
$$
\int^{\epsilon}_{{0}^{+}}\frac{du}{\rho(u)}=+\infty \;\;\;\mbox{ for some }\;\;\epsilon >0,$$
\end{enumerate}
then we have pathwise uniqueness for equation $e^{'}(\sigma,b)$. \\
\end{prop}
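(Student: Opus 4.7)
The plan is to check both conditions of Theorem~\ref{prin}. Uniqueness in law for $e'(\sigma,b)$ under the non-degeneracy $|\sigma|\geq\varepsilon>0$ is standard (via time-change and Skorokhod reflection), so the substantive task is to show that, for any two solutions $X,Y$ with $X_{0}=Y_{0}$ a.s., the measure $dL_{s}^{0}(X-Y)$ is carried by $\{s:X_{s}=Y_{s}=0\}$.

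The key trick is to apply the Yamada--Watanabe technique not to $X-Y$ but to the auxiliary process $V_{t}:=X_{t}^{N}-Y_{t}^{N}$, where $N$ is the exponent in hypothesis~(2). Because $dL_{s}^{0}(X)$ is supported on $\{X=0\}$, applying It\^o's formula to $X^{N}$ makes the reflecting local time term $\tfrac{N}{2}\int_{0}^{t}X_{s}^{N-1}\,dL_{s}^{0}(X)$ vanish for $N\geq 2$, and subtracting the analogous identity for $Y^{N}$ shows that $V$ is a continuous semimartingale whose bracket satisfies
\begin{equation*}
d\langle V\rangle_{s} \;=\; N^{2}\bigl(X_{s}^{N-1}\sigma(X_{s})-Y_{s}^{N-1}\sigma(Y_{s})\bigr)^{2}\,ds \;\leq\; c\,N^{2}\,\rho(|V_{s}|)\,ds
\end{equation*}
by hypothesis~(2). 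Choosing the classical Yamada--Watanabe smoothings $\psi_{n}\uparrow|\cdot|$ with $\psi_{n}''(x)\leq\tfrac{2}{n\rho(|x|)}\mathbf{1}_{(a_{n},a_{n-1})}(|x|)$, applying It\^o's formula to $\psi_{n}(V_{t})$ after a localization that makes the drift of $V$ integrable, taking expectations and sending $n\to\infty$, comparison with Tanaka's formula for $|V|$ yields $\mathbb{E}[L_{t}^{0}(V)]=0$; hence $L^{0}(X^{N}-Y^{N})\equiv 0$.

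To transfer this to a statement about $L^{0}(X-Y)$, factor
\begin{equation*}
V_{s}=X_{s}^{N}-Y_{s}^{N}=(X_{s}-Y_{s})\,P(X_{s},Y_{s}), \qquad P(x,y):=\sum_{k=0}^{N-1}x^{k}y^{N-1-k}\geq 0,
\end{equation*}
and apply the product rule for the local time of a product of continuous semimartingales (as in Remark~2.2 and \cite{Sup}):
\begin{equation*}
0=L_{t}^{0}(V)=\int_{0}^{t}P(X_{s},Y_{s})\,dL_{s}^{0}(X-Y)+\int_{0}^{t}(X_{s}-Y_{s})^{+}\,dL_{s}^{0}\bigl(P(X,Y)\bigr).
\end{equation*}
Both summands are nonnegative, so $\int_{0}^{t}P(X_{s},Y_{s})\,dL_{s}^{0}(X-Y)=0$. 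Since $P(x,y)\geq\max(x^{N-1},y^{N-1})$ on $\mathbb{R}_{+}^{2}$, the zero set of $P\circ(X,Y)$ is exactly $\{X=Y=0\}$, so $dL_{s}^{0}(X-Y)$ is carried by this set and Theorem~\ref{prin} delivers pathwise uniqueness. The case $N=1$ reduces to the classical Yamada--Watanabe estimate applied directly to $X-Y$.

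The main obstacle I anticipate is the localization step in the Yamada--Watanabe argument: the drift of $V$ mixes $NX^{N-1}b(X)-NY^{N-1}b(Y)$ with the It\^o corrections $\tfrac{N(N-1)}{2}\bigl(X^{N-2}\sigma^{2}(X)-Y^{N-2}\sigma^{2}(Y)\bigr)$, none of which is dominated by $\rho(|V|)$, so a stopping argument such as $\tau_{M}=\inf\{t:X_{t}\vee Y_{t}\geq M\}$ (together with local boundedness of $\sigma$ and $b$) must be inserted so that the $\psi_{n}'$-against-drift term in the It\^o expansion converges to the $\mathrm{sgn}(V)$ drift term upon sending $n\to\infty$. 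One then obtains the support property up to $\tau_{M}$ and lets $M\to\infty$.
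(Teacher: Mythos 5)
Your proposal is correct and follows essentially the same route as the paper: reduce to condition (2) of Theorem~\ref{prin}, prove $L^{0}(X^{n}-Y^{n})\equiv 0$ using hypothesis (2) of the proposition, factor $X^{n}-Y^{n}=(X-Y)P(X,Y)$ with $P(x,y)=\sum_{k=0}^{n-1}x^{k}y^{n-1-k}$, invoke the product local time formula of \cite{Sup} to kill the $dL^{0}(P(X,Y))$ term and deduce $\int_{0}^{t}P(X_{s},Y_{s})\,dL_{s}^{0}(X-Y)=0$, and conclude the support property from the positivity of $X$ and $Y$. The one place where you diverge is the proof that $L^{0}(X^{n}-Y^{n})=0$: you run the Yamada--Watanabe smoothing $\psi_{m}\uparrow|\cdot|$ on $V=X^{n}-Y^{n}$, which forces you to control the drift of $V$ (the $b$-terms, the It\^o corrections, and the reflecting local times) and is exactly the localization obstacle you flag at the end. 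The paper instead applies the occupation times formula directly: hypothesis (2) gives $\int_{0}^{t}1_{\{X_{s}^{n}-Y_{s}^{n}>0\}}\,d\langle X^{n}-Y^{n}\rangle_{s}/\rho(X_{s}^{n}-Y_{s}^{n})\leq n^{2}ct$, which equals $\int_{0^{+}}L_{t}^{a}(X^{n}-Y^{n})\rho(a)^{-1}\,da$, and the finiteness of this integral together with the divergence of $\int_{0^{+}}du/\rho(u)$ and the right-continuity of $a\mapsto L_{t}^{a}$ forces $L_{t}^{0}(X^{n}-Y^{n})=0$. This argument only ever touches the quadratic variation of $V$, so the drift never enters and the difficulty you anticipate simply does not arise; you may want to substitute it for the smoothing step, which would also make your separate treatment of the case $n=1$ unnecessary.
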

\begin{proof}
Let $X$ and $Y$ be two solutions of $e^{'}(\sigma,b)$  defined on a common probability basis with respect to same Brownian motion such that $X_{0}=Y_{0}$.
 The hypothesis $(1)$ is sufficient for  uniqueness in law.\\ The task now is to prove that the measure $\;dL_{s}(X-Y)$ is carried by the set $\{s,\; X_{s}=Y_{s}=0\}$. Let us show first that $L_{.}^{0}(X^n-Y^n)=0$. \\
Applying the assumption $(2)$ combined with Itô's formula,  we obtain:
\begin{eqnarray*}
\int_{0}^{t}1_{\{X_{s}^{n}-Y_{s}^{n}>0\}}\frac{d\langle X^{n}-Y^{n}\rangle_{s}}{\rho(X_{s}^{n}-Y_{s}^{n})}&\leq& n^{2}\int_{0}^{t}\frac{ (X_{s}^{n-1}\sigma(X_{s})-Y_{s}^{n-1}\sigma(Y_{s}))^{2}}{\rho(X_{s}^{n}-Y_{s}^{n})}1_{\{X_{s}^{n}-Y_{s}^{n}>0\}} \;ds\\
&\leq & n^{2}ct.
\end{eqnarray*}
Hence, by the occupation times formula, we get:
$$\int_{0}^{t}1_{\{X_{s}^{n}-Y_{s}^{n}>0\}}\frac{d\langle X^{n}-Y^{n}\rangle_{s}}{\rho(X_{s}^{n}-Y_{s}^{n})}=\int_{0^{+}} \frac{da}{\rho(a)}L_{t}^{a}(X^{n}-Y^{n})< \infty.$$
Thus with the help of the right continuity of $a\rightarrow L_{t}^{a}(X^{n}-Y^{n})$ and the condition $\int^{\epsilon}_{{0}^{+}}\frac{du}{\rho(u)}=\infty \;\mbox{for some }\;\epsilon >0$, $L_{t}^{0}(X^{n}-Y^{n})=0.$\\
On the other hand, $X^{n}-Y^{n}$ can be written as :
$$X^{n}-Y^{n}=(X-Y)P(X,Y),$$
with $P(X,Y)=\sum_{k=0}^{n-1}X^{k}Y^{n-k-1}$. So,
$$L_{t}^{0}(X^{n}-Y^{n})=L_{t}^{0}[(X-Y)P(X,Y)]\;\;\; \forall n \in \mathbb{N}^*$$
Using the formula giving the local time of product of two semimartingales which goes back to Ouknine\cite{Sup}, we obtain:
\begin{eqnarray}\label{car}
L_{t}^{0}(X^{n}-Y^{n})&=&L_{t}^{0}[(X-Y)P(X,Y)]\\ \nonumber
&=& \int_{0}^{t}(X_{s}-Y_{s})^+\;dL_{s}^{0}(P(X,Y))+\int_{0}^{t}P(X_{s},Y_{s}) \;dL_{s}^{0}(X-Y). %\int_{0}^{t}P(X,Y)\,dL_{s}^{0}(X-Y)&=& n\int_{0}^{t}X^{n-1}\,dL_{s}^{0}(X-Y)\\
\end{eqnarray}
But $\int_{0}^{t}(X_{s}-Y_{s})\;dL_{s}^{0}(P(X,Y))=0$, since the measure $dL_{s}^{0}(P(X,Y))$ is carried by the set $\{s,\; P(X,Y)=0\}=\{s,\; X_{s}=Y_{s}=0\}$,
the last equality is due to the positivity of $X$ and $Y$.\\
(\ref{car}) now becomes :
\begin{eqnarray*}
\int_{0}^{t}P(X_{s},Y_{s}) \;dL_{s}^{0}(X-Y)&=&n \int_{0}^{t}X^{n-1}\;dL_{s}^{0}(X-Y)\\
&=&0.
\end{eqnarray*}
Finally, owing to the positivity of $X$, the measure $dL_{s}^{0}(X-Y)$  is carried by the set of $\{s,\;X_{s}=Y_{s}=0\}$.
\end{proof}
\begin{prop}
Let $X$ and $Y$ be two solutions of equation $e^{'}(\sigma,b)$. If $\sigma$ satisfies the assumptions of the previous proposition, $Z:= X-Y$ , and \\$Z^{'}:= X^{n}-Y^{n}$. Then,
$$dL_{t}(Z)\perp dL_{t}(Z^{'}).$$
\end{prop}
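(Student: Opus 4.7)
The plan is simply to package together two facts that are already established inside the proof of Proposition \ref{propn}. Under the hypotheses on $\sigma$, that proof shows, via It\^o's formula applied to $X^n-Y^n$, the occupation times formula, and the right continuity of $a\mapsto L_t^a(X^n-Y^n)$, that
\[
L_t^0(X^n-Y^n)=0\qquad\text{for all }t\geq 0,
\]
so the random measure $dL_t(Z')=dL_t^0(X^n-Y^n)$ is identically zero. That proof also shows, using the product-local-time identity for semimartingales, that $dL_t(Z)=dL_t^0(X-Y)$ is carried by the set $\{s:X_s=Y_s=0\}$.

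From these two facts mutual singularity follows in one step. The zero measure $dL_t(Z')$ admits the empty set as a carrier; $dL_t(Z)$ has the disjoint carrier $\{X=Y=0\}$; these two carriers are disjoint, which is exactly the definition of $dL_t(Z)\perp dL_t(Z')$. Since both objects are defined pathwise, the mutual singularity is $\mathbb{P}$-a.s.

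If one prefers not to invoke the degeneracy of $dL(Z')$ and instead display the supports as geometrically transverse, I would start again from the product formula of \cite{Sup} (which is formula (\ref{car}) in the previous proof),
\[
L_t^0(Z')=\int_0^t(X_s-Y_s)^{+}\,dL_s^0(P(X,Y))+\int_0^t P(X_s,Y_s)\,dL_s^0(X-Y),
\]
with $P(X,Y)=\sum_{k=0}^{n-1}X^kY^{n-k-1}$. Since $X,Y\ge 0$, one has $\{P(X,Y)=0\}=\{X=Y=0\}$. The measure $dL^0(X-Y)$ is carried by this set, on which $P(X,Y)=0$, killing the second term; the measure $dL^0(P(X,Y))$ is carried by the same set, on which $(X-Y)^{+}=0$, killing the first term. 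Thus the carriers of $dL(Z)$ and $dL(Z')$ cannot overlap except on a set that is negligible for both, which is mutual singularity.

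I do not expect any real obstacle: the proposition is essentially a repackaging of intermediate information already extracted in the proof of Proposition \ref{propn}. The only bookkeeping point is to keep track of ``carried by'' in the pathwise sense for the random measures $dL_t(\cdot)$ on $[0,\infty)$.
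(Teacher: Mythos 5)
Your proposal is correct, and your ``second'' argument is essentially the paper's own proof: the authors also start from the product formula of \cite{Sup} (equation (\ref{car})), use the hypotheses on $\sigma$ to place the carrier of $dL^{0}(X-Y)$ inside $\{s:\,X_{s}=Y_{s}=0\}$, observe that $\int_{0}^{t}1_{\{X_{s}=Y_{s}=0\}}\,dL_{s}^{0}(X^{n}-Y^{n})=0$ because $P(0,0)=0$ and $(X-Y)^{+}=0$ on that set, and conclude singularity from the disjointness of carriers. Your first argument is a shortcut the paper does not state explicitly but which its own Proposition \ref{propn} makes available: that proof already yields $L^{0}(X^{n}-Y^{n})\equiv 0$, so $dL_{t}(Z')$ is the zero measure and the mutual singularity is degenerate. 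Pointing this out is worthwhile, since it shows the proposition adds nothing beyond what Proposition \ref{propn} established; the paper's formulation in terms of the vanishing of $\int_{0}^{t}1_{\{X_{s}=Y_{s}=0\}}\,dL_{s}^{0}(X^{n}-Y^{n})$ somewhat masks this. The one caveat --- which applies equally to the paper and so is not a gap in your argument --- is that the occupation-times estimate in Proposition \ref{propn} directly controls $L_{t}^{a}(X^{n}-Y^{n})$ only for $a>0$ and hence, via right continuity in the space variable, the right local time at $0$; a careful write-up should either phrase everything in terms of $L^{0+}$ or note why the version of the local time used here is the right-continuous one.
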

\begin{proof}
By the same argument in the proof of Proposition \ref{propn},
$$\int_{0}^{t}1_{\{X_{s}=Y_{s}=0\}}\;dL_{s}^{0}(X^{n}-Y^{n})=0.$$
But owing to assumptions satisfied by $\sigma$, the measure $dL_{s}^{0}(X-Y)$ is carried by the set $\{s,\;X_{s}=Y_{s}=0\}$ which completes the proof.
\end{proof}
\\

By the same lines as the previous proof of proposition \ref{propn} we have the following more general setting:
\begin{prop}\label{propf}
There is pathwise uniqueness for equation $e^{'}(\sigma,b)$ if the two following conditions hold:
\begin{enumerate}
  \item[(i)] There is uniqueness in law for equation $e^{'}(\sigma,b)$,
  \item [(ii)]
$\sigma$ verifies the following condition:
$$|f^{'}(x)\sigma(x)-f^{'}(y)\sigma(y)|^{2}\leq  c\rho( |f(x)-f(y)|)\;\;\;\mbox{for all}\;\; x,\;y \in \mathbb{R},$$
where $f:\mathbb{R}\rightarrow \mathbb{R}$ is strictly increasing, continuously differentiable, and it is a difference  of two convex functions, its derivative $f^{'}$ satisfies:
 $$f^{'}(x)=0\Leftrightarrow x=0,$$ and
 $\rho:\left[ 0,\infty \right)\rightarrow \left[ 0,\infty \right)$ is continuous and nondecreasing, $\rho(0)=0,\\\rho(x)>0 $ for $x>0$, such that:
  \[
  \int^{\epsilon}_{0}\frac{du}{\rho(u)}=\infty \;\mbox{ for some }\;\epsilon >0.
\]
\end{enumerate}
\end{prop}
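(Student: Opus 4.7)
The plan is to follow the strategy of Proposition \ref{propn}, with the monomial map $x \mapsto x^n$ replaced by the more general $f$. Let $X$ and $Y$ be two solutions of $e'(\sigma,b)$ defined on the same probability basis, driven by the same Brownian motion, with $X_0 = Y_0$. By Theorem \ref{prin} and hypothesis $(i)$, it suffices to show that $dL^0_s(X-Y)$ is carried by the set $\{s : X_s = Y_s = 0\}$.

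The first step is to exploit the modulus-of-continuity condition in $(ii)$ to make the local time of $f(X) - f(Y)$ at zero vanish. Because $f$ is a difference of two convex functions, the It\^o--Tanaka formula turns $f(X)$ and $f(Y)$ into semimartingales whose continuous martingale parts are $\int f'(X_s)\sigma(X_s)\, dB_s$ and $\int f'(Y_s)\sigma(Y_s)\, dB_s$, so that
\begin{equation*}
d\langle f(X) - f(Y)\rangle_s = \bigl( f'(X_s)\sigma(X_s) - f'(Y_s)\sigma(Y_s) \bigr)^2\, ds \leq c\, \rho\bigl( |f(X_s) - f(Y_s)|\bigr)\, ds.
\end{equation*}
The occupation times formula then gives $\int_{0^+}^\infty L^a_t(f(X) - f(Y))\rho(a)^{-1}\, da \leq ct$, and the divergence of $\int_{0^+} du/\rho(u)$ together with the right-continuity of $a \mapsto L^a_t$ forces $L^0_t(f(X) - f(Y)) = 0$, exactly as in the proof of Proposition \ref{propn}.

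The second step is to factor out $(X-Y)$. Writing
\begin{equation*}
f(X_s) - f(Y_s) = (X_s - Y_s)\, H(X_s, Y_s), \qquad H(x,y) = \int_0^1 f'\bigl(y + u(x - y)\bigr)\, du,
\end{equation*}
one sees that $H \geq 0$ since $f$ is increasing, and that $H(x,y) = 0$ if and only if $x = y = 0$, since $f'$ vanishes only at $0$. Treating $H(X,Y)$ as a semimartingale and applying the product formula for local times of \cite{Sup} yields
\begin{equation*}
0 = L^0_t(f(X) - f(Y)) = \int_0^t (X_s - Y_s)^+\, dL^0_s(H(X,Y)) + \int_0^t H(X_s, Y_s)^+\, dL^0_s(X - Y).
\end{equation*}
The first integral vanishes because $dL^0(H(X,Y))$ is carried by $\{H(X,Y) = 0\} = \{X = Y = 0\}$, on which $(X-Y)^+ = 0$. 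The support of $dL^0(X-Y)$ is contained in $\{X = Y\}$, and on this set $H(X_s, Y_s) = f'(X_s)$ is strictly positive whenever $X_s > 0$; hence $dL^0(X-Y)$ cannot charge $\{X > 0\}$ and must be carried by $\{X = Y = 0\}$, as required.

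The main obstacle is justifying that $H(X,Y)$ is a genuine semimartingale so that the product formula from \cite{Sup} applies. Since $f$ is only a difference of two convex functions, $f'$ is merely locally of bounded variation, so a naive It\^o argument does not suffice; one can either invoke Meyer's version of the It\^o formula applied to each integrand $f'(Y + u(X-Y))$ and conclude via a Fubini argument in $u$, or first approximate $f$ by $C^2$ functions $f_n$ satisfying the same hypothesis uniformly, run the argument above for each $f_n$, and pass to the limit using the uniform bound from Step 1.
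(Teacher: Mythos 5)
Your first step coincides with the paper's: the bound $d\langle f(X)-f(Y)\rangle_s\le c\,\rho(|f(X_s)-f(Y_s)|)\,ds$, the occupation times formula, and the divergence of $\int_{0^+}du/\rho(u)$ give $L^0_t(f(X)-f(Y))=0$ exactly as in Proposition \ref{propn}. Where you diverge is the second step. The paper does not factor $f(X)-f(Y)=(X-Y)H(X,Y)$ and apply the product formula of \cite{Sup}; it invokes Corollary \ref{corf} (Ouknine--Rutkowski), namely $L^0_t(f(X)-f(Y))=\int_0^t f'(X_s)\,dL^0_s(X-Y)$ for $f$ strictly increasing, $C^1$ and a difference of two convex functions, and then concludes immediately from $f'(x)=0\Leftrightarrow x=0$ that $dL^0(X-Y)$ is carried by $\{X=Y=0\}$. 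Your route is an attempt to re-derive that composition identity by hand, and you have correctly identified where it creaks: the product formula requires $H(X,Y)$ to be a semimartingale, and $f'$ being merely continuous and locally of bounded variation does not make $f'(Z)$, let alone $\int_0^1 f'(Y+u(X-Y))\,du$, a semimartingale. Neither of your proposed repairs is carried out: the Meyer--It\^o route needs $f'$ itself to be a difference of convex functions, which is not among the hypotheses, and the $C^2$-approximation route requires passing to the limit in $L^0_t(f_n(X)-f_n(Y))$, whose continuity in $f_n$ is not established (the hypothesis (ii) also need not survive mollification with uniform constants). So as written there is a genuine gap in your step 2.

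The gap is entirely closed by the tool the paper uses: Corollary \ref{corf} is precisely the statement your factorization is trying to prove, already available under exactly the regularity assumed on $f$. Replacing your product-formula argument by a direct appeal to that corollary turns your proposal into the paper's proof. One small additional remark: when you conclude, remember that the support of $dL^0(X-Y)$ lies in $\{X=Y\}$ and that solutions of $e'(\sigma,b)$ are nonnegative, so the vanishing of $\int_0^t f'(X_s)\,dL^0_s(X-Y)$ together with $f'(x)=0\Leftrightarrow x=0$ indeed pins the support down to $\{X=Y=0\}$, which is what Theorem \ref{prin} requires.
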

The following corollary is essential for the proof of Proposition \ref{propf}\;(see Ouknine and Rutkowski \cite{Rutkowski}).
\begin{cor}\label{corf}
Suppose that the function $g: \mathbb{R}\longrightarrow  \mathbb{R}$ is strictly increasing, continuously differentiable, and $g$ is a difference  of two convex functions, then we have:
$$L_{t}^{0}(g(X)-g(Y))= \int_{0}^{t}g^{'}(X_{s})\;dL_{s}^{0}(X-Y).$$
\end{cor}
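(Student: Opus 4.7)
The plan is to represent $g(X)-g(Y)$ as a product of two semimartingales and then invoke the local time formula for products of continuous semimartingales of Ouknine (cf.\ \cite{Sup}), already used in Remark~2.5 above. Writing
$$g(X_s)-g(Y_s)=(X_s-Y_s)\,H_s,\qquad H_s:=\int_0^1 g'\bigl(\theta X_s+(1-\theta)Y_s\bigr)\,d\theta,$$
two observations drive the argument: (i) since $g$ is strictly increasing, $\{g(X)=g(Y)\}=\{X=Y\}$, and on this set (which carries $dL^0(X-Y)$) the process $H_s$ collapses to the common value $g'(X_s)=g'(Y_s)$; (ii) the hypothesis $f'(x)=0\Leftrightarrow x=0$ applied to $g$ forces $\{H=0\}\subset\{X=Y=0\}$, since $H_s=0$ forces $g'(\theta X_s+(1-\theta)Y_s)=0$ for almost every $\theta\in[0,1]$, and hence the interpolant is identically zero. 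Consequently the measure $dL^0(H)$ is carried by $\{X=Y=0\}$, where $(X-Y)^+=0$.

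Suppose first that $g$ is smooth (say $g\in C^3$). Then $(x,y)\mapsto H(x,y)$ is $C^2$, so $H$ is a nonnegative continuous semimartingale by It\^o's formula applied to $(X,Y)$, and the product formula yields
$$L_t^0\bigl((X-Y)H\bigr)=\int_0^t H_s^+\,dL_s^0(X-Y)+\int_0^t (X_s-Y_s)^+\,dL_s^0(H).$$
By (i) the first integral equals $\int_0^t g'(X_s)\,dL_s^0(X-Y)$, and by (ii) the second integral vanishes, so the identity holds in the smooth case.

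For the general case ($g'$ only of bounded variation), I would approximate $g$ by strictly increasing smooth functions $g_n$, still differences of two convex functions, with $g_n\to g$ and $g_n'\to g'$ locally uniformly (standard mollification of $g'$ followed by integration, with a small monotone perturbation to preserve strict monotonicity). The identity holds for each $g_n$, and it remains to pass to the limit in both sides. The right-hand side converges by dominated convergence. The main obstacle is the convergence $L_t^0\bigl(g_n(X)-g_n(Y)\bigr)\to L_t^0\bigl(g(X)-g(Y)\bigr)$ in probability; I would handle this through the It\^o--Meyer decompositions of $g_n(X)-g_n(Y)$ and $g(X)-g(Y)$, controlling the semimartingale distance using the local uniform convergence of $g_n'$ together with the weak convergence of the signed measures $dg_n'$ against local-time profiles $a\mapsto L_t^a(X)$ and $a\mapsto L_t^a(Y)$, and then invoking the standard $\mathcal H^1$--continuity of the local time at zero.
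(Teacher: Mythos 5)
The paper never proves this corollary: it is imported verbatim from Ouknine and Rutkowski \cite{Rutkowski} and used as a black box in the proof of Proposition \ref{propf}. So there is no ``paper's proof'' to match; what you give, in its first half, is a correct and natural reconstruction using exactly the tool the paper relies on elsewhere (the product-of-semimartingales remark in Section \ref{continuous cas} and equation (\ref{car}) in the proof of Proposition \ref{propn}), namely Ouknine's formula $L_t^0(UV)=\int_0^t U_s^+\,dL_s^0(V)+\int_0^t V_s^+\,dL_s^0(U)$ from \cite{Sup}. Two remarks on that half. First, you import the hypothesis $f'(x)=0\Leftrightarrow x=0$ from Proposition \ref{propf}; it is not part of the corollary and, more to the point, you do not need it: if $X_s\neq Y_s$ then $H_s=(g(X_s)-g(Y_s))/(X_s-Y_s)\neq 0$ by strict monotonicity, so $\{H=0\}\subset\{X=Y\}$ already, and on that set $(X-Y)^+=0$, which kills the second integral. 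Second, the identification of the first integral with $\int_0^t g'(X_s)\,dL_s^0(X-Y)$ via the support of $dL^0(X-Y)$ is fine.

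The genuine gap is the passage from smooth $g$ to general $g$. You need $H_s=\int_0^1 g'(\theta X_s+(1-\theta)Y_s)\,d\theta$ to be a continuous semimartingale for the product formula to apply at all, and for $g$ merely $C^1$ and a difference of convex functions this is not automatic, which is why you mollify; but then the convergence $L_t^0(g_n(X)-g_n(Y))\to L_t^0(g(X)-g(Y))$ is precisely the hard point, and ``invoking the standard $\mathcal{H}^1$-continuity of the local time at zero'' is a program, not an argument. Local times at a fixed level are unstable under convergence of semimartingales unless the decompositions are controlled quantitatively: the martingale parts do converge because $g_n'\to g'$ locally uniformly, but the bounded-variation parts are of the form $\int_{\mathbb{R}}L_t^a(X)\,dg_n'(a)$, and one must show convergence of their total variations (not merely pointwise convergence of the integrals) to land in the semimartingale topology, and then still justify that $L^0$ passes to the limit at the single level $0$. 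As written, this step is exactly the content that would have to be extracted from \cite{Rutkowski}; everything before it is correct.
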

\begin{proof}[Proof of Proposition \ref{propf}]
Let $(X,B)$ and $(Y,B)$ two solutions of the SDE $e^{'}(\sigma,b)$ such that $X_{0}=Y_{0}$.\\
Let assumptions $(i)$, $(ii)$ hold. Then:
\begin{eqnarray*}
\int_{0}^{t}1_{\{f(X_{s})-f(Y_{s})>0\}}\frac{d\langle f(X)-f(Y)\rangle_{s}}{\rho(f(X_{s})-f(Y_{s}))}&\leq & \int_{0}^{t}1_{\{f(X_{s})-f(Y_{s})>0\}}\frac{ (f^{'}(X_{s})\sigma(X_{s})-f^{'}(Y_{s})\sigma(Y_{s}))^{2}}{\rho(f(X_{s})-f(Y_{s}))}\;ds\\
&\leq & ct
\end{eqnarray*}
By the same arguments as in the last paragraph of the proof of the Proposition (\ref{propn}):
$$L_{t}^{0}(f(X)-f(Y))=0.$$
On account of Corollary(\ref{corf}), we have:
$$\int_{0}^{t}f^{'}(X_{s})\;dL_{s}^{0}(X-Y)=0.$$
 Since $f^{'}(x)= 0 \Leftrightarrow x=0$, we can conclude that the measure
 $dL_{t}^{0}(X-Y)$ is carried by the set $\{t/\;X_{t}=Y_{t}=0\}$.
\end{proof}
\begin{rem}
We insist that in case $(ii)$ $\sigma$ does not depend on $s$ as in Proposition (\ref{propn}).
\end{rem}
Now, we introduce the definition of $(LT)_{+}$ condition.
\begin{defn}
$\sigma$ satisfies $(LT)_{+}$ if whenever $V^{1}$ and $V^{2}$ are continuous adapted processes of bounded variation
on some $(\Omega, \mathcal{F},\mathbb{P})$ and $X^{i}$ $(i=1,2)$ are \textbf{positive} adapted solutions of
$$X_{t}^{i}=x_{i}+\int_{0}^{t} \sigma(X_{s}^{i})\;dB_{s}+ V_{s}^{i}\;\;\;\; i=1,2$$
$(x_{1},x_{2} \in \mathbb{R})$, then $L_{t}^{0}(X^{1}-X^{2})=0$ for all $t \geq 0$.
\end{defn}
Under $(LT)_{+}$ condition the pathwise uniqueness fails for solutions to the SDE $e(\sigma,b)$. However we can show that this uniqueness is valid for its absolute value.
\begin{prop}
Suppose that $b$ is an odd bounded measurable function on $\mathbb{R}$. Suppose that $\sigma$ is an odd bounded measurable function on $\mathbb{R}$ with $a^{-2}$ locally integrable on $\mathbb{R}$  and satisfies $(LT)_{+}$ condition.
%there exists a continuous increasing function $\rho: \mathbb{R}^{+}\rightarrow \mathbb{R}^{+} $ with $rho(0)=0$, %such that
%\begin{eqnarray}
%|\sigma(x)-\sigma(y)|^{2}\leq \pho(|x-y|)\;\;\;\;\;\;\;, \forall x \geq 0, y \geq 0.
%\end{eqnarray}
%and
%$$\int_{0^{+}}\frac{1}{\rho^{2}(a)}\;da,$$
 Then for any two weak solutions $X$ and $Y$ to this SDE:
\begin{eqnarray}\label{eqabs}
dX_{t}=\sigma(X_{t})\;dB_{t}+b(X_{t})\;dt
\end{eqnarray}
with a common Brownian motion  on a common probability space with\\ $X_{0}=Y_{0}$. We have the pathwise uniqueness  for this reflected stochastic differential equation:
$$d|X_{t}|=\sigma(|X_{t}|)\;dB_{t}+b(|X_{t}|)\;dt+\frac{1}{2}dL_{t}^{0}(|X|)$$
where $L_{t}^{0}(X)$ is the local time of $X$.
\end{prop}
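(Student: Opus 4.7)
The strategy is to reduce the claim to Theorem \ref{prin} applied to the reflected equation $e^{'}(\sigma,b)$, by showing that both $|X|$ and $|Y|$ are solutions of it and that the $(LT)_{+}$ hypothesis forces the local-time condition appearing in that theorem. First, I would apply Tanaka's formula to $|X|$ where $X$ solves (\ref{eqabs}):
$$|X_t| = |X_0| + \int_0^t \mathrm{sgn}(X_s)\sigma(X_s)\,dB_s + \int_0^t \mathrm{sgn}(X_s) b(X_s)\,ds + L_t^0(X).$$
Since $\sigma$ and $b$ are odd, and therefore vanish at $0$, the first two integrands collapse to $\sigma(|X_s|)$ and $b(|X_s|)$. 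A direct computation comparing the symmetric local time $L_t^0(X)$ with the local time of the non-negative process $|X|$ at $0$ (using that $dL_s^0(X)$ is carried by $\{X_s=0\}$) identifies $L_t^0(X)$ with the local-time term appearing in $e^{'}(\sigma,b)$; hence $|X|$ is a non-negative solution of $e^{'}(\sigma,b)$ driven by $B$, and the same holds for $|Y|$.

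Next I would check the two hypotheses of Theorem \ref{prin}. Writing
$$|X_t| = |X_0| + \int_0^t \sigma(|X_s|)\,dB_s + V_s^X, \qquad |Y_t| = |Y_0| + \int_0^t \sigma(|Y_s|)\,dB_s + V_s^Y,$$
with $V^X, V^Y$ the sum of a bounded-drift integral and a non-decreasing local-time term, we see that $|X|$ and $|Y|$ are positive adapted processes of exactly the form required in the $(LT)_{+}$ condition. That condition then gives $L_t^0(|X|-|Y|)=0$ for every $t\geq 0$, so the measure $dL_t^0(|X|-|Y|)$ is the zero measure and hypothesis (2) of Theorem \ref{prin} is trivially satisfied. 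For hypothesis (1), uniqueness in law for $e^{'}(\sigma,b)$ follows from the Engelbert--Schmidt framework: a Girsanov transformation removes the bounded drift $b$, and the local integrability of $\sigma^{-2}$ together with the occupation times formula permits a time-change identification of the resulting driftless reflected solution with reflected Brownian motion, whose law is canonical.

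Applying Theorem \ref{prin} then yields pathwise uniqueness for $e^{'}(\sigma,b)$, and therefore $|X|=|Y|$ almost surely, as claimed. The step I expect to be most delicate is the uniqueness in law: combining a reflection at $0$ with a possibly very irregular $\sigma$ requires some care, though the assumption that $\sigma^{-2}$ is locally integrable is precisely what makes the time-change argument succeed. By contrast, the oddness-based identification of $|X|$ as a solution of $e^{'}(\sigma,b)$ and the invocation of $(LT)_{+}$ are essentially mechanical once the setup is in place.
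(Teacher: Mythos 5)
Your proposal is correct and follows essentially the same route as the paper: Tanaka's formula plus the oddness of $\sigma$ and $b$ to show that $|X|$ and $|Y|$ both solve $e^{'}(\sigma,b)$, and then the $(LT)_{+}$ condition to kill $L^{0}(|X|-|Y|)$. In fact you are more complete than the paper, whose proof ends abruptly with ``by $(LT)_{+}$ condition we get the desired result'': your explicit appeal to Theorem \ref{prin} (for which vanishing of the local time trivially gives hypothesis $(2)$) and your discussion of uniqueness in law via the local integrability of $\sigma^{-2}$ supply exactly the steps the paper leaves implicit.
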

\begin{proof}
Suppose that $\sigma$ and $b$ satisfy the assumptions of the proposition. Let $X$ and $Y$ two weak solutions to (\ref{eqabs}) with a common Brownian motion on a common probability space and $X_{0}=Y_{0}$. $X$ satisfies (\ref{eqabs}) and $\sigma$ is odd, then by Tanaka's formula
$$d|X_{t}|=\sigma(|X_{t}|)\;dB_{t}+b(|X_{t}|)\;dt+\frac{1}{2}dL_{t}^{0}(|X|))$$
 Similarly $|Y_{t}|$ satisfies equation with $Y_{t}$ in the place of $|X|$. By $(LT)_+$ condition we get the desired result.
\end{proof}
\vspace{0.1cm}
\\

It is shown by Barlow \cite{Barlow} that
$$dX_{t}=(a 1_{\{X_{t}>0\}}-b 1_{\{X_{t}\leq 0\}})\,dB_{t}$$
may not have any strong solutions if $a$ and $b$ are strictly positive. Then the pathwise uniqueness fails. However, we will  show the pathwise uniqueness for $\widetilde{X}=\frac{1}{a}X^{+}+\frac{1}{b}X^{-}$.
\begin{prop}
Let $X$ and $Y$ be two weak solutions to this SDE:
\begin{eqnarray}\label{eqbar}
dX_{t}= (a 1_{\{X_{t}>0\}}-b1_{\{X_{t}\leq 0\}})\;dB_{t}
\end{eqnarray}
with a common Brownian motion  on a common probability space such that $X_{0}=Y_{0}$.
then we have:
$$\mathbb{P}\left(\frac{1}{a}X_{t}^{+}+\frac{1}{b}X_{t}^{-}=\frac{1}{a}Y_{t}^{+}+\frac{1}{b}Y_{t}^{-}\; \mbox{for all} \;t\geq 0\right)=1$$
\end{prop}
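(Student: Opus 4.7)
The plan is to reduce the claim to the uniqueness of the classical (one-dimensional) Skorokhod reflection problem driven by the single given Brownian motion $B$. I introduce the convex, piecewise-linear map $\phi:\mathbb R\to[0,\infty)$ defined by $\phi(x)=x/a$ on $[0,\infty)$ and $\phi(x)=-x/b$ on $(-\infty,0)$, so that $\phi(x)=\tfrac{1}{a}x^+ + \tfrac{1}{b}x^-$ coincides with the quantity appearing in the statement (both summands understood as nonnegative). Its distributional second derivative is the point mass $\phi''(dy)=(\tfrac{1}{a}+\tfrac{1}{b})\delta_0(dy)$, so the It\^o--Tanaka formula for convex functions gives
\begin{equation*}
\phi(X_t)=\phi(X_0)+\int_0^t \phi'(X_s)\,dX_s+\tfrac{1}{2}\Bigl(\tfrac{1}{a}+\tfrac{1}{b}\Bigr)L_t^0(X).
\end{equation*}

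Next I identify the stochastic integral with $B_t$. Using \eqref{eqbar}, a direct calculation of $\phi'(x)\sigma(x)$ with $\sigma(x)=a\,1_{\{x>0\}}-b\,1_{\{x\leq 0\}}$ yields $(1/a)\cdot a=1$ on $\{x>0\}$ and $(-1/b)\cdot(-b)=1$ on $\{x<0\}$; the value of $\phi'$ at the origin is irrelevant because the occupation times formula applied to $X$ shows that $\{s\leq t:X_s=0\}$ has zero Lebesgue measure almost surely. Consequently $\int_0^t \phi'(X_s)\,dX_s=B_t$, and setting $\widetilde X_t:=\phi(X_t)$ together with $K_t:=\tfrac{a+b}{2ab}L_t^0(X)$, the previous display becomes
\begin{equation*}
\widetilde X_t=\widetilde X_0+B_t+K_t,\qquad \widetilde X_t\geq 0,\qquad K\text{ nondecreasing},\qquad dK_t\text{ is carried by }\{\widetilde X_t=0\},
\end{equation*}
the last point following from $\{X_s=0\}=\{\widetilde X_s=0\}$ since $\phi$ vanishes only at $0$.

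The same computation applied to $Y$ produces $\widetilde Y_t=\phi(Y_t)$ together with $K'_t:=\tfrac{a+b}{2ab}L_t^0(Y)$ satisfying the identical Skorokhod problem with the same driving input $\widetilde Y_0+B_t=\widetilde X_0+B_t$. By the classical uniqueness of the one-dimensional Skorokhod reflection (the reflected component is given explicitly by $K_t=\max(0,-\inf_{0\leq s\leq t}(\widetilde X_0+B_s))$ and then $\widetilde X_t=\widetilde X_0+B_t+K_t$), one obtains $\widetilde X_t=\widetilde Y_t$ for all $t\geq 0$ almost surely, which is precisely the claim.

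The point that I expect to be most delicate is the It\^o--Tanaka bookkeeping at the kink $x=0$ of $\phi$: one must justify the exact coefficient $\tfrac{a+b}{2ab}$ multiplying $L^0(X)$ and confirm that the ambiguous value of $\phi'$ at $0$ does not affect the stochastic integral. Both rest on the observation that $X$ spends Lebesgue-zero time at the origin; once this is in place, the remainder of the argument is an essentially one-line invocation of Skorokhod uniqueness.
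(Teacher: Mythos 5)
Your argument is correct, and the transformation at its heart is exactly the paper's: both you and the authors apply It\^o--Tanaka to the tent map $\phi(x)=\tfrac{1}{a}x^{+}+\tfrac{1}{b}x^{-}$, observe that $\phi'(X_s)\sigma(X_s)\equiv 1$ off the (Lebesgue-null, since $\sigma^2\geq\min(a^2,b^2)>0$) set $\{X_s=0\}$ so the martingale part collapses to $B_t$, and reduce to a reflected equation for $\phi(X)$. Where you part ways is the closing step. The paper computes $L_t^{0}(\phi(X))=(\tfrac1a+\tfrac1b)L_t^{0}(X)$ and rewrites the reduced equation as $\phi(X_t)=B_t+\tfrac12 L_t^{0}(\phi(X))$, then cites Harrison--Shepp's strong uniqueness for $Z=B+\beta L^{0}(Z)$ with $|\beta|\leq 1$. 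You instead keep the increasing process as $K_t=\tfrac{a+b}{2ab}L_t^{0}(X)$, note it is carried by $\{\phi(X)=0\}$, and invoke the deterministic Skorokhod reflection lemma with its explicit formula $K_t=\max\bigl(0,-\inf_{s\leq t}(\widetilde X_0+B_s)\bigr)$. Your route is the more elementary and self-contained of the two: it needs no external uniqueness theorem, and it sidesteps the normalization bookkeeping (which local time, symmetric or one-sided, appears with which coefficient) that the paper's appeal to Harrison--Shepp quietly depends on --- bookkeeping that is harmless here only because $\int_0^t 1_{\{X_s=0\}}\,dX_s=0$ forces all local times at $0$ to coincide, a point you correctly isolate. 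One cosmetic remark: the statement must be read with $x^{-}$ denoting the nonnegative negative part (so that $\phi$ is the non-injective tent function), despite the paper's stated convention $x^{-}=x\wedge 0$; your reading is the only one consistent with Barlow's failure of pathwise uniqueness for $X$ itself, and it is the one the paper's own computation uses.
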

\begin{proof}
Let $(X,B)$ and $(Y,B)$ be two weak solutions to (\ref{eqbar}) driving by the same Brownian motion $B$ on a common probability space, $X_{0}=Y_{0}$. Let $\alpha>0$, $\beta >0$. We set $\phi(X_{t})=\alpha X_{t}^{+}+ \beta X_{t}^{-} $\\
By Itô's formula we have
\begin{eqnarray}\label{phi}
\phi(X_{t})&=&\int_{0}^{t} (\alpha a 1_{\{X_{s}>0\}}+\beta b 1_{\{X_{s}\leq 0\}})\;dB_{s}+\frac{1}{2}(\alpha+\beta)L_{t}^{0}(X)
\end{eqnarray}
Tanaka's formula gives
\begin{eqnarray*}
L_{t}^{0}(\phi(X))&=& 2\int_{0}^{t} 1_{\{\phi(X_{s})=0\}}d\phi(X_{s})\\
&=& 2 \int_{0}^{t}\alpha 1_{\{X_{s}=0\}}\;dX_{s}^{+}+2 \int_{0}^{t}\beta1_{\{X_{s}=0\}}\;dX_{s}^{-}\\
&=& (\alpha+\beta)L_{t}^{0}(X).
\end{eqnarray*}
Setting $\alpha=\frac{1}{a}$, $\beta=\frac{1}{b}$, and substituting in (\ref{phi}), we get
\begin{eqnarray}\label{Barl}
\phi(X_{t})= B_{t}+\frac{1}{2}L_{t}^{0}(\phi(X)).
\end{eqnarray}
Thus, denoting by $\phi(Y)$ the process obtained in the same way from $Y$, and so $\phi(Y)$ satisfies equation (\ref{Barl}) with $\phi(Y)$ in place of $\phi(X)$. Since $\frac{1}{2}<1$, the equation (\ref{Barl}) has unique strong solution (see \cite{Harisson}). Consequently,
$$\mathbb{P}(\phi(X_{t})=\phi(Y_{t})\; \mbox{for all} \;t\geq 0)=1$$.
\end{proof}
%%%%%%%%%%%%%%%%%%%%%%%%%%%%%%%%%%%%%%%%%%%%%%%%%%%%%%%%%%%%%%%%%%%%%%%%%%%%%%%%%%%%%%%%%%%%%%%%%%%%%%%
\subsection{SDE with local time}\label{SDE with local time}
%In this section, we are concerned with the one-dimensional stochastic equations who allows singular drifts:
In this subsection, the main subject of this part, the stochastic differential equations involving local times of unknown process, is treated. A special case of such equations was introduced by Harisson and shepp \cite{Harisson}, they have established that the equation:
$$X_{t}=B_{t}+ \beta L_{t}^{0}(X),$$
has no solution if $|\beta|>1$, and has a unique strong solution when $|\beta|\leq 1$. Next Le Gall \cite{Legall} has studied equation of a form :
\begin{eqnarray}
X_{t}&= X_{0}+ \int_{0}^{t}\sigma_{s}(X_{s})\;dB_{s}+\int_{\mathbb{R}}L_{t}^{a}(X)\; \nu(da)
\end{eqnarray}
where $\nu$ stands for a finite  signed measure in $\mathbb{R}$, and the diffusion coefficient is assumed to be strictly positive function of finite variation.\\
We shall prove pathwise uniqueness of solutions to more general equation, corresponding to non Markovian type:
\begin{eqnarray}\label{eqnu}
X_{t}&=&X_{0}+ \int_{0}^{t}\sigma_{s}(X_{.},B_{.})\;dB_{s}+\int_{\mathbb{R}}L_{t}^{a}(X)\; \nu(da)
\end{eqnarray}
where
\begin{itemize}
\item[(i)]$L^{a}$ is the local time at $a$ for the semimartingale $X$ and $\nu$ is a $\sigma$-finite measure with $|\nu|(\{a\}) < 1$, $\forall a \in \mathbb{R}$,
\item[(ii)] $B$ is a standard Wiener process,
\item [(iii)]the mapping function $\sigma:\mathbb{R}^{+}\times C(\mathbb{R}^{+},\mathbb{R})\times C(\mathbb{R}^{+},\mathbb{R})\rightarrow \mathbb{R}$ is measurable and adapted to the filtration $(\mathcal{C}_{t})_{t \geq 0}$.
\end{itemize}
\begin{rem}
If $\sigma\equiv 1$ and $\nu$ is concentrated in the point $0$ with \\$|\nu|(\{0\}) < 1$, equation (\ref{eqnu}) describes the skew Brownian motion which was treated by several authors, e.g. Harisson and Shepp $(1981)$, Le Gall $(1982)$, Ouknine $(1991)$.
\end{rem}
\begin{rem}
In a certain sense, the jump condition:
$$|\nu|(\{a\}) < 1$$
is a natural restriction since there is, in general, no solution of equation  (\ref{eqnu}) if $\nu(\{a\}) <-1$ for some $a \in \mathbb{R}$ (cf. \cite{Harisson}).
\end{rem}
Since we are interested by stochastic differential equations with a general mapping functions $\sigma$. We shall generalize the definition of $(LT)$ condition introduced in Barlow and Perkins \cite{Barlow}.
%In first, we give the definition of the $(LT)$ condition due to Barlow and Perkins $(1984)$, that will help ustoget pathwise uniqueness of equation (\ref{eqnu}).
\begin{defn}
$\sigma$ satisfies $(LT)$ if whenever $V^{1}$ and $V^{2}$ are continuous adapted processes of bounded variation
on some $(\Omega, \mathcal{F},\mathbb{P})$ and $X^{i}$ $(i=1,2)$ are adapted solutions of
$$X_{t}^{i}=x_{0}+\int_{0}^{t} \sigma_{s}(X^{i},B)\;dB_{s}+ V_{s}^{i}\;\;\;\; i=1,2$$
where $\sigma:\mathbb{R}^{+}\times C(\mathbb{R}^{+},\mathbb{R})\times C(\mathbb{R}^{+},\mathbb{R})\rightarrow \mathbb{R}$ is measurable and adapted to the filtration $(\mathcal{C}_{t})_{t \geq 0}$. then $L_{t}^{0}(X^{1}-X^{2})=0$ for all $t \geq 0$.
\end{defn}
The $(LT)$ condition will help us to get pathwise uniqueness for the SDE (\ref{eqnu}).
\begin{thm}\label{EDSb}
If $\sigma$ satisfies $(LT)$ condition, we have the following properties:
\begin{enumerate}
  \item[(i)] The solution to (\ref{eqnu}) is pathwise unique.
  \item [(ii)]There is uniqueness in law for equation (\ref{eqnu}).
\end{enumerate}
\end{thm}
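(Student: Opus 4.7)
The plan is to eliminate the local-time drift in (\ref{eqnu}) by a strictly increasing change of scale, reducing the problem to a driftless SDE on the transformed scale, and then to apply the $(LT)$ hypothesis. Following the classical technique of Le Gall \cite{Legall} and Engelbert--Schmidt \cite{Schmidt}, I would first construct a strictly increasing continuous function $f:\mathbb{R}\to\mathbb{R}$, difference of two convex functions, whose left and right derivatives $f'_\pm>0$ satisfy, in the sense of signed Radon measures,
\begin{equation*}
\tfrac{1}{2}\,f''(da)+f'(a)\,\nu(da)=0.
\end{equation*}
On the nonatomic part of $\nu$ this is an ODE solved by an exponential; at each atom $a$ of $\nu$ the required jump relation reads $f'_+(a)/f'_-(a)=(1-\nu(\{a\}))/(1+\nu(\{a\}))$, and this ratio is strictly positive and finite precisely under the hypothesis $|\nu|(\{a\})<1$.

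Given two solutions $X$ and $Y$ of (\ref{eqnu}) with the same Brownian motion $B$ and $X_0=Y_0$, the drift processes $V^i_t=\int_{\mathbb{R}} L^a_t(X^i)\,\nu(da)$ are continuous and of bounded variation (since $a\mapsto L^a_t$ is compactly supported and $\nu$ is locally finite), so the $(LT)$ hypothesis applied directly to $X$ and $Y$ already yields $L^0_t(X-Y)=0$. Next, applying the It\^o--Tanaka formula to $f(X_t)$ and using the defining measure equation for $f$ together with the fact that $dL^a_s(X)$ is carried by $\{X_s=a\}$ causes the bounded-variation parts to cancel, leaving
\begin{equation*}
f(X_t)=f(X_0)+\int_0^t f'(X_s)\,\sigma_s(X_\cdot,B_\cdot)\,dB_s,
\end{equation*}
and symmetrically for $f(Y)$. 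Hence $M_t:=f(X_t)-f(Y_t)$ is a continuous local martingale with $M_0=0$.

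By the local-time transformation rule Corollary \ref{corf}, extended to this piecewise-smooth $f$ by approximation by strictly increasing $C^1$ difference-of-convex functions,
\begin{equation*}
L^0_t(M)=\int_0^t f'(X_s)\,dL^0_s(X-Y)=0.
\end{equation*}
Tanaka's formula then gives $|M_t|=\int_0^t \mathrm{sgn}(M_s)\,dM_s$, so $|M|$ is a nonnegative continuous local martingale starting at $0$, hence identically zero. Strict monotonicity of $f$ yields $X\equiv Y$, proving~(i). Part~(ii) follows from~(i) together with the Yamada--Watanabe principle, provided weak existence for (\ref{eqnu}) is invoked (standard, e.g.\ via the martingale-problem construction of Engelbert--Schmidt \cite{Schmidt}). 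The principal difficulty lies in the first step: rigorously producing $f$ with the correct jump behaviour at every atom of $\nu$, verifying $f'>0$ everywhere, and checking that the It\^o--Tanaka calculation indeed cancels the local-time drift when the atomic part of $\nu$ is dense or infinite.
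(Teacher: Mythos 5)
Your proposal is correct and follows essentially the same route as the paper: the same scale function $F_\nu(x)=\int_0^x e^{-2\nu^c]0,y]}\prod_{z\le y}\frac{1-\nu\{z\}}{1+\nu\{z\}}\,dy$ removes the local-time drift, the $(LT)$ condition applied directly to the two solutions (whose drifts $\int_{\mathbb{R}}L^a(X^i)\,\nu(da)$ are continuous and of bounded variation) gives $L^0(X-Y)=0$, and uniqueness in law follows from the Engelbert version of Yamada--Watanabe. The one point where you diverge is the deduction that $L^0\bigl(F_\nu(X)-F_\nu(Y)\bigr)=0$: rather than extending Corollary \ref{corf} to the non-$C^1$ scale function by mollification (the difficulty you flag at the end), the paper uses only the uniform bounds $0<m\le F_\nu'\le M$ to get the sandwich $m(X_t-Y_t)^+\le (F_\nu(X_t)-F_\nu(Y_t))^+\le M(X_t-Y_t)^+$ and then invokes the comparison theorem for local times of \cite{Sup}, which sidesteps the regularity of $F_\nu$ at the atoms of $\nu$ entirely.
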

\begin{proof}
\\
$(i)\;$ Let $(X,B )$ and $(X^{'},B)$  be two solutions of the SDE (\ref{eqnu})  with the same Brownian motion $B$ on the same probability space.\\
After localization, one can suppose that $X$ is bounded and $\nu$ is $\sigma$-finite measure because $a\rightarrow L_{t}^{a}(X)$ is compactly supported. The main tool to do this proof is the following transformation. Let us set
$$f_{\nu}(y)=e^{-2 \nu^{c}]0,y]}\prod_{z \leq y}\left(\frac{1-\nu\{z\}}{1+\nu\{z\}}\right),$$
where $\nu^{c}$ is the continuous part of the measure $\nu$.
It is well known that the function $f_{\nu}$ satisfies
$$0<m\leq f_{\nu}(x)\leq M \;\;\;\forall x \in \mathbb{R}$$
for some constants $m$, $M$. We set
$$F_{\nu}(x)=\int_{0}^{x}e^{-2 \nu^{c}]0,y]}\prod_{z \leq y}\left(\frac{1-\nu\{z\}}{1+\nu\{z\}}\right)\;dy,$$
hence the function $F_{\nu}$ is increasing, bijective and is a difference of two convex functions. We set
$$Y=F_{\nu}(X),\;\;\;\;\;\;\mbox{and}\;\;\;\;\mbox{and}\;\;\;\;Y^{'}=F_{\nu}(X^{'}).$$
By application of Itô-Tanaka's  formula one has:
$$Y_{t}=F_{\nu}(x)+\int_{0}^{t}\widetilde{\sigma}_{s}(X,B)\;dB_{s},$$
and,
$$Y^{'}_{t}=F_{\nu}(x)+\int_{0}^{t}\widetilde{\sigma}_{s}(X^{'},B)\;dB_{s}.$$
where $\widetilde{\sigma}_{s}(X,B)=f_{\nu}(X_{s})\sigma_{s}(X_{.}, B_{.})$.\\
Plainly,
$$m(X_{t}-X_{t}^{'})^{+} \leq (Y_{t}-Y^{'}_{t})^{+} \leq M (X_{t}-X_{t}^{'})^{+},$$
By comparison theorem for local times (see \cite{Sup}):
$$L_{t}^{0}(Y-Y^{'})\leq M L_{t}^{0}(X-X^{'}).$$

As $\sigma$ verifies $(LT)$ condition, we have
$$L_{t}^{0}(X-X^{'})=0.$$
It follows easily that $L_{t}^{0}(Y-Y^{'})=0$. Moreover, $Y-Y^{'}$ is continuous martingale and $Y_{0}-Y_{0}^{'}=0$, and so $Y=Y^{'}$. This  implies that the paths of $X$ are uniquely determined.
\\
$(ii)$ To prove the uniqueness in law, we use the theorem of Engelbert-Yamada-Watanbe \cite{ENgelbert-Yamada} which shows that the pathwise uniqueness implies uniqueness in law.
\end{proof}
%\begin{rem}
%In their fundamental paper T. Yamada and  S. Watanabe \cite{} proved that the pathwise uniqueness of the solution of the equation:
%$$X_{t}=X_{0}+\int_{0}^{t}b_{s}(X,W)\;ds+ \int_{0}^{t}\sigma_{s}(X,W)\;dW_{s}\;\;t \in \mathbb{R}^{+}$$
%implies the uniqueness in law which obviously remain true for more general stochastic equations and in particular, for stochastic equations of the form
%\end{rem}
\vspace{0.1cm}
\\

As an immediate application of the previous theorem, we have:
\begin{cor}
Let $b$ and  $\sigma$ be two measurable functions on $\mathbb{R}$. By $N_{\sigma}$ we denote the set $N_{\sigma}=\{x: \sigma(x)=0\}$ of zero of $\sigma$ .\\
If
\begin{enumerate}
\item  $\sigma$  satisfies $(LT)$ condition,
\item $\frac{b}{\sigma^{2}}1_{N_{\sigma}^{c}}\in L_{loc}^{1}(\mathbb{R})$.
\end{enumerate}
Then the pathwise uniqueness holds for:
\begin{equation}\label{eqb}
\left\{
\begin{array}{lll}
dX_{t}=\sigma(X_{t})\;dB_{t}+b(X_{t})\;dt\\
\int_{0}^{\infty}1_{N_{\sigma}}(X_{s})\;ds=0 \\
\end{array}
\right.
\end{equation}
%%non staying at $N_{\sigma}$:

\end{cor}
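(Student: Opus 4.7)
The approach is to convert equation \eqref{eqb} into the local-time-driven form \eqref{eqnu} so that Theorem \ref{EDSb} applies directly. The key observation is that under hypothesis (2), the drift $b(X_s)\,ds$ can be rewritten as an integral of the local times of $X$ against an absolutely continuous measure.

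First I would use the non-sticky condition $\int_0^\infty 1_{N_\sigma}(X_s)\,ds = 0$ (which is built into the definition of a solution of \eqref{eqb}) to discard the contribution of $N_\sigma$:
$$\int_0^t b(X_s)\,ds = \int_0^t b(X_s)\,1_{N_\sigma^c}(X_s)\,ds.$$
On $\{X_s \in N_\sigma^c\}$ one has $\sigma^2(X_s)>0$, so multiplying and dividing by $\sigma^2$ and using $d\langle X\rangle_s = \sigma^2(X_s)\,ds$ together with the occupation times formula yields
$$\int_0^t b(X_s)\,ds \;=\; \int_0^t \frac{b}{\sigma^2}(X_s)\,1_{N_\sigma^c}(X_s)\,d\langle X\rangle_s \;=\; \int_{\mathbb{R}} \frac{b(a)}{\sigma^2(a)}\,1_{N_\sigma^c}(a)\,L_t^a(X)\,da.$$

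Next I would introduce $\nu(da) := \frac{b(a)}{\sigma^2(a)}\,1_{N_\sigma^c}(a)\,da$. By hypothesis (2) this is a $\sigma$-finite signed measure, and since $\nu \ll dx$ we have $|\nu|(\{a\}) = 0 < 1$ for every $a \in \mathbb{R}$. Thus any solution of \eqref{eqb} satisfies
$$X_t \;=\; X_0 + \int_0^t \sigma(X_s)\,dB_s + \int_{\mathbb{R}} L_t^a(X)\,\nu(da),$$
which is precisely \eqref{eqnu} with the particular choice $\sigma_s(x,w) := \sigma(x_s)$. Since $\sigma$ satisfies the $(LT)$ condition, Theorem \ref{EDSb} delivers pathwise uniqueness for this equation, hence for \eqref{eqb}.

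The only nontrivial point is technical: one must check that $\int_{\mathbb{R}} L_t^a(X)\,\nu(da)$ makes sense when $\nu$ is merely $\sigma$-finite. This is handled by the same localization as in the proof of Theorem \ref{EDSb}: for each fixed $\omega$ and $t$, the map $a \mapsto L_t^a(X)$ has compact support (it vanishes outside $[\inf_{s\leq t}X_s, \sup_{s\leq t}X_s]$), so local integrability of the density $\tfrac{b}{\sigma^2}\,1_{N_\sigma^c}$ is exactly what is needed for the integral to be finite and well defined.
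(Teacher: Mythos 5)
Your proposal is correct and follows essentially the same route as the paper: absorb the drift off $N_{\sigma}$ into $d\langle X\rangle_{s}$, apply the occupation times formula to rewrite it as $\int_{\mathbb{R}}L_{t}^{a}(X)\,\nu(da)$ with $\nu(da)=\frac{b(a)}{\sigma^{2}(a)}1_{N_{\sigma}^{c}}(a)\,da$, and invoke Theorem \ref{EDSb}. If anything you are slightly more careful than the paper, which attributes the vanishing of $\int_{0}^{t}b(X_{s})1_{N_{\sigma}}(X_{s})\,ds$ to the $(LT)$ condition rather than to the non-sticky constraint, and which does not explicitly record that $\nu$ is atomless (so $|\nu|(\{a\})<1$) or that $a\mapsto L_{t}^{a}(X)$ has compact support.
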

\begin{proof}
Let $X$ be a solution of (\ref{eqb}) for which the time spent at $N_{\sigma}$ has Lebesgue measure $0$.\\
The equation (\ref{eqb}) can be written as:
\begin{eqnarray*}
X_{t}&=&X_{0}+\int_{0}^{t}\sigma(X_{s})\;dB_{s}+\int_{0}^{t}b(X_{s})1_{N_{\sigma}}(X_{s})\;ds+\int_{0}^{t}\frac{b(X_{s})}{\sigma^{2}(X_{s})}.\sigma^{2}(X_{s})1_{N_{\sigma}^{c}}(X_{s})\;ds
\end{eqnarray*}
By $(LT)$ condition
$$\int_{0}^{t}b(X_{s})1_{N_{\sigma}}(X_{s})\;ds=0,$$
then using occupation times formula, we may reexpress $X_{t}$ as follows :
\begin{eqnarray*}
X_{t}&=&X_{0}+\int_{0}^{t}\sigma(X_{s})\;dB_{s}+\int_{0}^{t}\frac{b(X_{s})}{\sigma^{2}(X_{s})}1_{N_{\sigma}^{c}}(X_{s})\;d\langle X\rangle_{s}\\
&=& X_{0}+\int_{0}^{t}\sigma(X_{s})\;dB_{s}+\int_{\mathbb{R}}L_{t}^{x}(X)\mu(dx)\\
\end{eqnarray*}
with $L_{t}^{x}(X)$ is the local time of $X$, $\mu(dx)=\frac{b(x)}{\sigma^{2}(x)}1_{N_{\sigma}^{c}}(x)\;dx$.
 Since \\$\frac{b}{\sigma^{2}}1_{N_{\sigma}^{c}}\in L_{loc}^{1}(\mathbb{R})$, $\mu$ is a $\sigma$-finite signed measure. By Theorem  \ref{EDSb}, there exists a unique pathwise solution to (\ref{eqb}).
\end{proof}
\begin{Example}
If $N_{\sigma} \subset N_{b}$ or $N_{\sigma}$ is at most countable, then $\int_{0}^{t}1_{N_{\sigma}}(X_{s})\;ds=0$ for all $t >0$.
\end{Example}
%%%%%%%%%%%%%%%%%%%%%%%%%%%%%%%%%%%%%%%%%%%%%%%%%%%%%%%%%%%%%%%%%%%%%%%%%%%%%%%%
%%%%%%%%%%%%%%%%%%%%%%%%%%%%%%%%%%%%%%%%%%%%%%%%%%%%%%%%%%%%%%%%%%%%%%%%%%%%%%%%
\subsection{On One SDE with non-sticky boundary conditions}\label{On One SDE with non-sticky boundary conditions}
\begin{thm}\label{Th5}
Consider the following equation
\begin{eqnarray}\label{EDSa}
X_{t}&=&X_{0}+ \int_{0}^{t}a(X_{s})\;dB_{s}+ \int_{0}^{t} b(X_{s})\;ds.
\end{eqnarray}
Suppose that $a(x)$,  $b(x)$ are bounded continuous functions which satisfy the following conditions;
\begin{enumerate}
  \item $a$ satisfies $(LT)$ condition,
  \item $a(0)=0$, $a(x)\neq 0$ for $x \neq 0$ and $b(0) \neq 0$.
\end{enumerate}
Then the pathwise uniqueness holds for(\ref{EDSa}).
\end{thm}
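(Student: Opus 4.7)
The plan is to adapt the strategy of Theorem \ref{prin}: combine the $(LT)$ hypothesis with the $X\vee Y$, $X\wedge Y$ device, and close the argument by proving uniqueness in law for equation (\ref{EDSa}).

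Given two solutions $(X,B)$ and $(Y,B)$ with $X_0=Y_0$, I would first put each in the form required by $(LT)$: since $b$ is bounded, $V^X_t=\int_0^t b(X_s)\,ds$ and $V^Y_t=\int_0^t b(Y_s)\,ds$ are continuous adapted processes of bounded variation. Applied with $\sigma_s(X,B)=a(X_s)$, the $(LT)$ hypothesis yields $L_t^0(X-Y)=0$. Feeding this into Tanaka's formula for $(X-Y)^+$ and running the same computation as in the proof of Theorem \ref{prin}, one checks that both $X\vee Y$ and $X\wedge Y$ are again solutions of (\ref{EDSa}) starting at $X_0$ and driven by the same Brownian motion $B$.

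The second ingredient is the non-stickiness of $0$ for every solution of (\ref{EDSa}). The general identity $\int_0^t 1_{\{X_s=0\}}\,dX_s=0$, valid for any continuous semimartingale at any level (a consequence of Tanaka's formula), combined with the decomposition $dX_s=a(X_s)\,dB_s+b(X_s)\,ds$ and $a(0)=0$, gives $b(0)\,\mathrm{Leb}\{s\leq t:\,X_s=0\}=0$, so $\mathrm{Leb}\{s:\,X_s=0\}=0$ almost surely since $b(0)\neq 0$. Off the single zero of $a$ the equation is non-degenerate (continuous and bounded coefficients, $a\neq 0$), and a classical Engelbert--Schmidt / Stroock--Varadhan type argument then delivers uniqueness in law for (\ref{EDSa}).

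Uniqueness in law forces $X$, $X\vee Y$ and $X\wedge Y$ to share the same one-dimensional marginals, so that $\E[X_t\vee Y_t]=\E[X_t\wedge Y_t]$; combined with the pointwise inequality $X\vee Y\geq X\wedge Y$ this gives $\E[|X_t-Y_t|]=0$, hence indistinguishability of $X$ and $Y$. The delicate point I expect to be the main obstacle is uniqueness in law: the degeneracy $a(0)=0$ makes $1/a^{2}$ not locally integrable at the origin, so the standard Engelbert--Schmidt criterion does not apply off the shelf, and one must feed the non-sticky behavior forced by $b(0)\neq 0$ into the classical argument to treat the single bad point.
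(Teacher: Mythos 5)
Your overall architecture diverges from the paper's, and the divergence lands you squarely on the hardest point of the theorem without the tools to handle it. The paper does not prove uniqueness in law for arbitrary solutions of (\ref{EDSa}) and then run the $X\vee Y$, $X\wedge Y$ device of Theorem \ref{prin}. Instead it approximates $b$ monotonically by Lipschitz drifts $b_n$, uses the comparison Lemma \ref{compar} (itself a consequence of $(LT)$) to build minimal and maximal solutions $\underline{X}$ and $\overline{X}$ as monotone limits of strong solutions, observes that these two extremal processes are strong Markov diffusions with the common local generator $a^{2}(x)\frac{d^{2}}{dx^{2}}+b(x)\frac{d}{dx}$ which do not stay at the origin (Lemma \ref{origin}), and invokes Feller's classification of one-dimensional diffusions to conclude that $\underline{X}$ and $\overline{X}$ coincide in law; since every solution is sandwiched between them, $\mathbb{E}[\overline{X}_t-\underline{X}_t]=0$ forces all solutions to agree. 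The whole point of that construction is to avoid proving uniqueness in law for a general, possibly non-Markovian, solution such as your $X\vee Y$: Feller uniqueness is applied only to the two extremal solutions, which are Markov by construction. In your proposal the sentence ``a classical Engelbert--Schmidt / Stroock--Varadhan type argument then delivers uniqueness in law'' is exactly the missing step that you yourself flag at the end: $a(0)=0$ with $b(0)\neq 0$ makes $b/a^{2}$ non-integrable at the origin, the drift cannot be removed by the usual scale or Girsanov transformations, and no off-the-shelf criterion applies. Without an actual argument here the proof does not close, and supplying one essentially amounts to redoing the paper's extremal-solution construction (or the Manabe--Shiga argument it generalizes).

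A second, more local, problem: the identity $\int_0^t 1_{\{X_s=0\}}\,dX_s=0$ is \emph{not} valid for a general continuous semimartingale. For reflected Brownian motion one has $\int_0^t 1_{\{|B_s|=0\}}\,d|B_s|=L_t^{0}(B)\neq 0$; in general the integral only vanishes on the martingale part, and what remains is, up to the normalization convention for local time, the difference $L_t^{0+}(X)-L_t^{0-}(X)$ of the one-sided local times, i.e.\ the contribution of the bounded-variation part. This is why Lemma \ref{origin} does not assert your identity but instead proves $L_t^{0}(X)-L_t^{0^-}(X)=0$ as a consequence of the $(LT)$ condition, and only then deduces $b(0)\,\mathrm{Leb}\{s\le t:\,X_s=0\}=0$. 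Your non-stickiness step is therefore repairable, but only by routing it through $(LT)$ as the paper does, not through a ``general identity''. The parts of your plan that are sound --- $(LT)$ giving $L^{0}(X-Y)=0$, the verification that $X\vee Y$ and $X\wedge Y$ are again solutions, and the expectation argument once uniqueness in law is in hand --- are correct, but they constitute the easy half of the theorem.
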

In order to lighten the proof, we begin by the following Lemma:
\begin{lem}\label{origin}
Under the assumptions of Theorem \ref{Th5}, any solution of the equation (\ref{EDSa}) satisfies
$$\int_{0}^{t}1_{\{X_{s}=0\}}\;ds=0, \;\;\; \forall t \geq 0,\;\;\; a.s.$$
\end{lem}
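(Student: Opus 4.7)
The plan is to combine the $(LT)$ hypothesis on $a$ with Tanaka-type identities and the vanishing of the diffusion coefficient at zero. First I would reduce to the case $X_0 = 0$: if $X_0 \neq 0$, setting $\tau = \inf\{s \ge 0 : X_s = 0\}$ gives $\int_0^\tau 1_{\{X_s=0\}}\,ds = 0$ by continuity, and the post-$\tau$ process $\hat X_u := X_{\tau+u}$ solves the same SDE starting at $0$, driven by the shifted Brownian motion $\hat B_u := B_{\tau+u} - B_\tau$. Once $X_0 = 0$ is assumed, I observe that $Y \equiv 0$ is itself an adapted solution of $Y_t = 0 + \int_0^t a(Y_s)\,dB_s + W_t$ with $W \equiv 0$---this is the key use of $a(0)=0$---while $X$ is a solution with drift $V_t = \int_0^t b(X_s)\,ds$, which is continuous of bounded variation since $b$ is bounded. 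Both processes start from $x_0 = 0$, so applying the $(LT)$ hypothesis on $a$ yields $L_t^0(X - Y) = L_t^0(X) = 0$.

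From $L_t^0(X) = \frac{1}{2}(L_t^{0+}(X) + L_t^{0-}(X))$ and the non-negativity of $L^{0\pm}$, the vanishing of $L_t^0(X)$ forces $L_t^{0+}(X) = L_t^{0-}(X) = 0$. Using the two Tanaka formulas for $X^+$ and $X^-$ recorded in the Preliminaries, subtracting them and isolating the $\{X_s = 0\}$ portion of $\int_0^t dX_s$ gives the identity
\[
\int_0^t 1_{\{X_s=0\}}\,dX_s = \frac{1}{2}\bigl(L_t^{0+}(X) - L_t^{0-}(X)\bigr) = 0.
\]

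To conclude, I would evaluate the same integral directly from the SDE:
\[
\int_0^t 1_{\{X_s=0\}}\,dX_s = \int_0^t 1_{\{X_s=0\}}\,a(X_s)\,dB_s + \int_0^t 1_{\{X_s=0\}}\,b(X_s)\,ds.
\]
The stochastic integral is a continuous local martingale with quadratic variation $\int_0^t 1_{\{X_s=0\}}\,a^2(X_s)\,ds = a^2(0)\int_0^t 1_{\{X_s=0\}}\,ds = 0$, so it vanishes identically; the Lebesgue integral equals $b(0)\int_0^t 1_{\{X_s=0\}}\,ds$. Equating with zero and using $b(0) \neq 0$ yields $\int_0^t 1_{\{X_s=0\}}\,ds = 0$, as desired.

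I expect the delicate step to be the application of $(LT)$ to the pair $(X, 0)$: one must verify that the constant process $0$ is a legitimate adapted solution of the required form with continuous bounded-variation drift, a point which relies crucially on $a(0)=0$, and one must also justify the preliminary reduction to $X_0 = 0$ through the strong Markov property of Brownian motion. Everything afterwards is a clean assembly of Tanaka's formula and the vanishing of stochastic integrals against martingales of zero quadratic variation.
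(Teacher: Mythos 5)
Your proof is correct and follows essentially the same route as the paper: the paper likewise reduces the claim to showing $\int_0^t 1_{\{X_s=0\}}\,dX_s=0$ via the jump $L_t^{0}(X)-L_t^{0-}(X)$ of the local time in the space variable, and then splits $dX_s$ into its martingale and drift parts using $a(0)=0$ and $b(0)\neq 0$. Your explicit application of the $(LT)$ condition against the identically zero solution (together with the preliminary reduction to $X_0=0$) merely supplies the justification that the paper leaves implicit in the sentence ``$(LT)$ condition implies that $L_t^{0}(X)-L_t^{0-}(X)=0$.''
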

\begin{proof}
Let $X$ be a solution of (\ref{EDSa}).
We have

\begin{eqnarray*}
L_{t}^{0}(X)- L_{t}^{0^{-}}(X)&=&2 \int_{0}^{t}1_{\{X_{s}=0\}}\;dX_{s}\\
&=& 2 \int_{0}^{t}1_{\{X_{s}=0\}}a(X_{s})\;dB_{s}+ 2\int_{0}^{t}1_{\{X_{s}=0\}}b(X_{s})\;ds.
\end{eqnarray*}
$(LT)$ condition implies that $L_{t}^{0}(X)- L_{t}^{0^{-}}(X)=0$.
Since $a(0)=0$ and $b(0)\neq 0$,
we must have
$$\int_{0}^{t}1_{\{X_{s}=0\}}\;ds=0$$
\end{proof}
\begin{lem}\label{compar}
Consider the following two equations;
\begin{eqnarray}\label{EDScomp}
X_{t}^{i}=X_{0}^{i}+\int_{0}^{t}a(X_{s}^{i})\;dB_{s}+\int_{0}^{t}b^{i}(X_{s}^{i})\;ds, \;\;\; i=1,\;2,
\end{eqnarray}
where $a(x)$, $b^{i}(x)$ are bounded continuous functions.
\begin{enumerate}
  \item [(i)]$a$ satisfies $(LT)$ condition,
  \item[(ii)] $b^{1}(x)<b^{2}(x)$ for any $x\in \mathbb{R}$.
\end{enumerate}
If $X^{1}$ and $X^{2}$ are solutions of (\ref{EDScomp}) and $X_{0}^{1}\leq X_{0}^{2}$, then $X_{t}^{1}\leq X_{t}^{2}$ for $\forall t \geq 0$ with probability one.
\end{lem}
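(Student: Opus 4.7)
The plan is to apply Tanaka's formula to $(X^1-X^2)^+$, use the $(LT)$ property of $a$ to kill the local-time term, and then leverage the \emph{strict} inequality $b^1<b^2$ through a double localization that exploits continuity of $b^1$ on compacts.

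Set $Z:=X^1-X^2$. I would first establish that $L_t^0(Z)=0$ for all $t$. The $(LT)$ definition is stated for solutions starting at a common point, but we bridge this gap by stopping at $\tau:=\inf\{t\ge 0:X^1_t=X^2_t\}$. On $[0,\tau)$ we have $Z_s\neq 0$, so $L^0_\cdot(Z)$ does not grow. On $\{\tau<\infty\}$, the shifted processes $\hat X^i_s:=X^i_{\tau+s}$ start at the common value $X^1_\tau$ and, by the strong Markov property of $B$, satisfy SDEs $d\hat X^i=a(\hat X^i)\,d\hat B+d\hat V^i$ with $\hat V^i_s:=\int_0^s b^i(\hat X^i_u)\,du$ continuous adapted of bounded variation, so $(LT)$ yields $L^0(\hat X^1-\hat X^2)\equiv 0$; equivalently, $L^0(Z)$ does not grow after $\tau$ either. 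Since $Z_0\le 0$ gives $Z_0^+=0$, Tanaka's formula reduces to
\begin{equation*}
Z_t^+=\int_0^t 1_{\{Z_s>0\}}\bigl(a(X^1_s)-a(X^2_s)\bigr)\,dB_s+\int_0^t 1_{\{Z_s>0\}}\bigl(b^1(X^1_s)-b^2(X^2_s)\bigr)\,ds.
\end{equation*}

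The main obstacle is that on $\{Z_s>0\}$ the integrand $b^1(X^1_s)-b^2(X^2_s)$ evaluates $b^1$ and $b^2$ at different arguments, so the pointwise strict inequality does not directly give a signed drift. To fix this I would employ a double stopping. Let $\tau_K:=\inf\{t:|X^1_t|\vee|X^2_t|>K\}$, which tends to infinity a.s.\ because $a,b^i$ are bounded. On $[-K,K]$, continuity plus strict pointwise inequality give $\delta_K:=\min_{x\in[-K,K]}(b^2(x)-b^1(x))>0$, and uniform continuity of $b^1$ on $[-K,K]$ provides $\eta_K>0$ with $|b^1(y)-b^1(z)|\le\delta_K/2$ whenever $|y-z|\le\eta_K$, $y,z\in[-K,K]$. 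Put $\rho_K:=\inf\{t:Z_t>\eta_K\}$. Then on $[0,\tau_K\wedge\rho_K]\cap\{Z_s>0\}$ both $X^i_s$ lie in $[-K,K]$ with $0<X^1_s-X^2_s\le\eta_K$, whence
\begin{equation*}
b^1(X^1_s)-b^2(X^2_s)=\bigl(b^1(X^1_s)-b^1(X^2_s)\bigr)+\bigl(b^1(X^2_s)-b^2(X^2_s)\bigr)\le\frac{\delta_K}{2}-\delta_K=-\frac{\delta_K}{2}.
\end{equation*}
Stopping the Tanaka identity at $t\wedge\tau_K\wedge\rho_K$ and taking expectations (the stochastic integral is a true martingale, since its integrand is bounded on that interval) yields $\mathbb{E}[Z_{t\wedge\tau_K\wedge\rho_K}^+]\le -(\delta_K/2)\,\mathbb{E}\int_0^{t\wedge\tau_K\wedge\rho_K}1_{\{Z_s>0\}}\,ds\le 0$, so $Z\le 0$ on $[0,\tau_K\wedge\rho_K]$ almost surely. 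By continuity of $Z$, on $\{\rho_K<\infty\}$ one has $Z_{\rho_K}=\eta_K>0$, which together with the preceding inequality forces $\rho_K\ge\tau_K$ almost surely; thus $Z\le 0$ on $[0,\tau_K]$ for every $K$, and letting $K\to\infty$ gives $X^1_t\le X^2_t$ for all $t\ge 0$.
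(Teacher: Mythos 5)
Your proof is correct, but it takes a genuinely different route from the paper's. Both arguments start the same way: Tanaka's formula applied to $(X^1_t-X^2_t)^+$, with the local-time term killed by the $(LT)$ hypothesis, leaving only the drift term after taking expectations. The divergence is in how the drift is handled. The paper exploits the \emph{strict} inequality $b^1<b^2$ only to interpose a Lipschitz function $l$ with $b^1<l<b^2$, reduces to the case where one of the two drifts is Lipschitz, bounds $b^1(X^1_s)-b^2(X^2_s)\le b^1(X^1_s)-b^1(X^2_s)\le C_t|X^1_s-X^2_s|$ on $\{X^1_s>X^2_s\}$, and closes with Gronwall's lemma. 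You instead use the strict inequality directly: a double localization (exit time of $[-K,K]$ and first time $Z=X^1-X^2$ exceeds $\eta_K$) combined with uniform continuity of $b^1$ on compacts makes the drift of $Z^+$ at most $-\delta_K/2<0$ on the stopped interval, so $\mathbb{E}[Z^+_{t\wedge\tau_K\wedge\rho_K}]\le 0$ forces $Z\le 0$ there, and a continuity argument shows $\rho_K\ge\tau_K$. Your version buys two things: it avoids Gronwall entirely, and it sidesteps the (unaddressed in the paper) need to produce, on the same probability space and with the same Brownian motion, a solution for the intermediate Lipschitz drift $l$ in order to make the sandwiching reduction legitimate. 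You are also more careful than the paper on one point it glosses over: Definition 4.3 of $(LT)$ is stated for solutions issued from a common point $x_0$, and your stopping-and-restarting argument at the first meeting time $\tau$ is exactly what is needed to justify $L^0(X^1-X^2)\equiv 0$ under that literal reading (the paper applies $(LT)$ directly, implicitly using the Barlow--Perkins form with distinct starting points). The one cosmetic remark: $\tau_K\to\infty$ already follows from continuity of the paths, so the appeal to boundedness of $a,b^i$ there is unnecessary, though harmless.
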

\begin{proof}
Let $X^{1}$ and $X^{2}$ be solutions of (\ref{EDScomp}) corresponding to $b^{1}$, $b^{2}$ respectively.\\
In case $(i)$ we can Suppose that either $b^{1}$ or $b^{2}$ are Lipschitz continuous, because it is possible to find a Lipschitz function $l$ such that $b^{1}<l< b^{2}$.
By Tanaka's formula:
\begin{eqnarray*}
(X_{t}^{1}-X_{t}^{2})^{+}&=&(X_{0}^{1}-X_{0}^{2})^{+}+\int_{0}^{t} 1_{\{X_{s}^{1}>X_{s}^{2}\}}(a(X_{t}^{1})-a(X_{t}^{2}))\;dB_{s}\\
&+& \int_{0}^{t}1_{\{X_{s}^{1}>X_{s}^{2}\}}(b^{1}(X_{s}^{1})-b^{2}(X_{s}^{2}))\;ds+\frac{1}{2}L_{t}^{0}(X^{1}-X^{2}).
\end{eqnarray*}
The function $a$ satisfies $(LT)$ condition, therefore,
$$\psi(t)=\mathbb{E}[(X_{t}^{1}-X_{t}^{2})^{+}] \leq \mathbb{E}\left[\int_{0}^{t} 1_{\{X_{s}^{1}>X_{s}^{2}\}}(b^{1}(X_{s}^{1})-b^{1}(X_{s}^{2}))\;ds\right].$$
Thus, if $b^{1}$ is Lipschitz with constant $C_{t}$,
$$\psi(t) \leq C_{t}. \mathbb{E}\left[\int_{0}^{t}1_{\{X_{s}^{1}>X_{s}^{2}\}}|X_{s}^{1}-X_{s}^{2}|\;ds\right]=C_{t}\int_{0}^{t}\psi(s)\;ds,$$
and we conclude by using Gronwall's lemma and the usual continuity arguments.\\
If $b^{2}$ is Lipschitz, using the same arguments, we get:
$$\psi(t) \leq  \mathbb{E}\left[\int_{0}^{t}1_{\{X_{s}^{1}>X_{s}^{2}\}}|b^{2}(X_{s}^{1})-b^{2}(X_{s}^{2})|\;ds\right].$$
Since $b^{1}\leq b^{2}$,  we complete the proof as in the first case.
%We deduce that:
%$$X_{t}^{1}=X_{t}^{2}\; \forall t \geq 0\;\;\mbox{a.s}.$$
%In general case, we can suppose that $b^{1}$ and $b^{2}$ are not lipschitz
%Since $b^{i}$ are bounded continuous and satisfy $ b^{1}(x)< b^{2}(x)$ for any $x$ of $\mathbb{R}$, \\
%there exists $l$ lipschitz function such that: $b^{1}\leq l\leq b^{2}$.
% By introducing the function $l$  and by the same way as below, we obtain the same result.
%\begin{eqnarray*}
%\mathbb{E}(X_{t}^{1}-X_{t}^{2})^{+}& \leq  & \mathbb{E}\left[\int_{0}^{t}1_{\{X_{s}^{1}>X_{s}^{l}\}} (b^{1}(X_{t}^{1})-l(X_{t}^{l}))\;ds\right]+\mathbb{E}\left[ \int_{0}^{t}1_{\{X_{s}^{l}>X_{s}^{2}\}}(l(X_{t}^{l})-b^{2}(X_{t}^{2}))\;ds\right]
%\end{eqnarray*}
\end{proof}
\\

Now, we are ready to give the proof of Theorem.
\\
\begin{proof}[Proof of Theorem]
%By Lemma \ref{compar} there exists the maximal and the minimal solution (The construction of this solutions appear in \cite{OUrea}).
The proof consists in the construction of the minimal and the maximal solutions $\underline{X}$ and $\overline{X}$ respectively. This construction is due to Y. Ouknine \cite{OUrea}. Under the assumptions of Theorem \ref{Th5}, $b$ is continuous, then there exists a sequence $b_{n}$ Lipschitz nondecreasing in $n$, and such that if $x_{n}$ converges to $x$, $b_{n}(x_{n})$ converges to $b(x)$. Let $X_{n}$ be the unique strong solution to the SDE:
\begin{eqnarray}
X_{t}&=&X_{0}+ \int_{0}^{t}a(X_{s})\;dB_{s}+ \int_{0}^{t} b_{n}(X_{s})\;ds.
\end{eqnarray}
By comparison Theorem, the process $X^{n}=\{X_{t}^{n},\;\;t \geq 0\}$ is increasing in $n$, a.s. For $t\geq 0$, we define $\underline{X}_{t}=\lim_{n\rightarrow+\infty}X_{t}^{n}$. \\
We denotes $X^{\pm K}$ the solutions of the SDE's:
$$X_{t}=x+\int_{0}^{t}\sigma(X_{s})\;dB_{s}\pm (K+1) t$$
We can assume that $b_{n}(x)\leq K+1,\;\;\forall n \in \mathbb{N}$. Thus, $|\underline{X}_{t}|\leq|X_{t}^{-K}|+|X_{t}^{+K}|$, by sample estimations, we deduce that $\mathbb{E}\sup_{s \leq t}|X_{s}^{\pm K}|^{2} < \infty$. Therefore, $|\underline{X}_{t}|< \infty$ a.s., If we tend $n \rightarrow \infty$, we prove that $|\underline{X}|$ is a strong solution to the SDE:
\begin{eqnarray}
X_{t}&=&X_{0}+ \int_{0}^{t}a(X_{s})\;dB_{s}+ \int_{0}^{t} b(X_{s})\;ds.
\end{eqnarray}
The construction of $\overline{X}$ being treated in similar fashion.
It is clear that $\underline{X}$ and $\overline{X}$ have the strong Markov property. So $\overline{X}$ and $\underline{X}$ are  diffusions processes with the same local generator
$$L=a^{2}(x)\frac{d^{2}}{dx^{2}}+b(x)\frac{d}{dx}$$
and have no stay at the origin by Lemma \ref{origin}.
However,  by Feller's general theory of one dimensional diffusion processes, there exists only one diffusion process which possesses $L$ as its local generator and does not stay at the origin. Hence the pathwise uniqueness.
\end{proof}
\section{A generalization of the perturbed Tanaka equation }\label{Tanaka}
Prokaj in \cite{prokaj} has showed a recent result on the pathwise uniqueness of the so-called perturbed Tanaka equation:
\begin{eqnarray}\label{ta}
Y_t= y+\int_0^t sign(Y_s)dM_t+  N_t
\end{eqnarray}
\begin{thm}[Prokaj 2010]
Suppose that $M$,  $N$ are continuous local martingales with $M_0=N_0=0$ and quadratic and cross-variations that satisfy the condition of orthogonality and domination
$$\langle M, N\rangle_t=0\;\;\;\;\langle M\rangle_t =\int_0^t q(s)\langle N\rangle_s\;\;\;\; s \leq  t$$
respectively, for some progressively measurable process $q(.)$ with values in a compact  interval $[0,c]$. Under these assumptions pathwise uniqueness holds for the perturbed Tanaka equation (\ref{ta}).
\end{thm}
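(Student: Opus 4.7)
The plan is to apply the local time technique of Perkins and Le Gall to the difference $D_t := Y^1_t - Y^2_t$ of two solutions of $(\ref{ta})$ with common initial condition $Y^1_0 = Y^2_0 = y$, taking full advantage of the orthogonality $\langle M,N\rangle = 0$ and the domination $d\langle M\rangle = q\, d\langle N\rangle$, $q \in [0,c]$.

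The first key observation is that the $N$ contributions of $Y^1$ and $Y^2$ cancel in $D$, so
$$dD_t = \bigl[sgn(Y^1_t) - sgn(Y^2_t)\bigr]\, dM_t,$$
making $D$ a continuous local martingale with $D_0 = 0$ and
$$d\langle D\rangle_t = 4 \cdot 1_{\{Y^1_t Y^2_t < 0\}}\, d\langle M\rangle_t.$$
Tanaka's formula applied to each $|Y^i|$ together with $\langle M,N\rangle = 0$ yields $d\langle Y^i\rangle_t = d\langle M\rangle_t + d\langle N\rangle_t$ off the set $\{Y^i = 0\}$ (which has zero $d\langle M\rangle$ measure), so $d\langle M\rangle_t \le c\, d\langle Y^i\rangle_t$.

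The heart of the argument is an occupation times estimate. On $\{0 < D_t < \epsilon\} \cap \{Y^1_t Y^2_t < 0\}$ one necessarily has $Y^1_t > 0 > Y^2_t$ with $0 < Y^1_t < \epsilon$; hence
$$\int_{0}^{\epsilon} L^a_t(D)\, da = \int_0^t 1_{\{0<D_s<\epsilon\}}\, d\langle D\rangle_s \le 4c \int_0^t 1_{\{0<Y^1_s<\epsilon\}}\, d\langle Y^1\rangle_s = 4c \int_{0}^{\epsilon} L^a_t(Y^1)\, da,$$
with a symmetric bound on the negative side in terms of $L^a_t(Y^2)$.

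The main obstacle is to promote these integrated inequalities into the pointwise identity $L^0_t(D) = 0$, since dividing by $\epsilon$ and sending $\epsilon \to 0^+$ only bounds the right and left local times of $D$ at $0$ by a multiple of those of $Y^i$. Closing this gap is the delicate step and should proceed by iterating the comparison via the strong local time result of Section~\ref{continuous cas}, together with the fact that the strict orthogonality of $M$ and $N$ effectively trades a factor of $\langle M\rangle$ against $\langle N\rangle$ at each iteration, ultimately forcing $L^0_t(D) = 0$. Once this is achieved, Tanaka's formula gives $|D_t| = \int_0^t sgn(D_s)\, dD_s$, so $|D|$ is a non-negative continuous local martingale starting at $0$; standard localization then forces $D \equiv 0$, hence $Y^1 \equiv Y^2$.
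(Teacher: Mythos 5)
Your setup is fine as far as it goes — $D=Y^1-Y^2$ is a local martingale with $d\langle D\rangle_s=(sgn(Y^1_s)-sgn(Y^2_s))^2\,d\langle M\rangle_s$, and the observation that on $\{0<D_s<\epsilon\}\cap\{Y^1_sY^2_s<0\}$ one has $0<Y^1_s<\epsilon$ is correct. But the proof has a genuine gap exactly where you flag it, and the device you propose to close it does not work. Your unweighted occupation estimate only yields $L^{0+}_t(D)\le 4c\,L^{0+}_t(Y^1)$ after dividing by $\epsilon$, and $L^{0+}_t(Y^1)$ is strictly positive in general for Tanaka-type solutions, so this inequality carries no information. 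The proposed "iteration via the comparison result of Section \ref{continuous cas}" is not available: Theorem \ref{positif} requires $X^+\le Y^+$ together with inclusion of the zero sets, neither of which holds between $D$ and $Y^1$, and the bound $L^{0+}_t(D)\le 4c\,L^{0+}_t(Y^1)$ contains no contraction factor that could be driven to zero by repetition. Note also that the naive weighted variant fails for the same reason: bounding $\int_{0^+}\frac{L^a_t(D)}{a}\,da$ by $4c\int_{0^+}\frac{L^a_t(Y^1)}{a}\,da$ gives a divergent majorant, since $L^{0+}_t(Y^1)>0$.

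The paper handles Prokaj's theorem as the special case $\sigma=sgn$ (which satisfies (2BV) with $f=2\,sgn$ of bounded variation) of Theorem \ref{thm:2}, and the proof there contains the two ingredients you are missing. First, one works with the Perkins--Le Gall weighted integral $\int_{0^+}^{\infty}\frac{L^a_t(X-Y)}{a}\,da=\int_0^t\frac{(\sigma(X_s)-\sigma(Y_s))^2}{X_s-Y_s}1_{\{X_s-Y_s>0\}}\,d\langle M\rangle_s$; finiteness of this integral forces $L^{0}_t(X-Y)=0$ by right continuity of $a\mapsto L^a_t$, which is the mechanism your unweighted estimate lacks. Second, to bound that integral one cannot compare with a single solution; instead one applies (2BV), mollifies $f$, writes $\frac{f_n(X_s)-f_n(Y_s)}{X_s-Y_s}=\int_0^1 f_n'(Z^\alpha_s)\,d\alpha$ with $Z^\alpha=\alpha X+(1-\alpha)Y$, and uses orthogonality plus domination to get $d\langle M\rangle\le c\,d\langle Z^\alpha\rangle$ for every $\alpha$; the occupation formula for $Z^\alpha$ then gives the finite bound $c\,\|f\|_{\infty}\sup_{\alpha,a}\mathbb{E}\bigl[L^a_t(Z^\alpha)\bigr]$, because $\int_{\mathbb{R}}f_n'(a)\,da$ is controlled by the total variation of $f$. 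It is this averaging over the interpolated processes $Z^\alpha$ that spreads the singularity at $a=0$ over all levels and makes the weighted integral finite; without it your argument cannot reach $L^0_t(D)=0$. The final step (Tanaka, then $|D|$ a nonnegative local martingale started at $0$) is the same in both your sketch and the paper.
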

In this section, we use the local time technics introduced by Perkins \cite{perkins} and further developed by LeGall \cite{legall2}, to provide a simple proof of a more general result of this type.\\
Let $(\Omega,(\F_t)\tgo,\P)$ be a filtered probability space and
$B=(B^{(1)},B^{(2)})$ be a two dimensional Brownian motion
in the filtration
$(\F_t)\tgo$.
We are interested in the uniqueness of the solution for
the following equation
\begin{equation}
  \label{eq:XU}
  dX_t=\sigma(X_t)dB^{(1)}_t+\lambda dB^{(2)}_t,
\end{equation}
where $\lambda\in \mathbb{R}$ is a constant and $\sigma: \mathbb{R}\rightarrow \mathbb{R}$ is a measurable function which satisfies the assumption ($2$BV) below.
%To avoid trivialities, especially when $\lambda=0$, here and in what follows
%We call \eqref{eq:XU} the perturbed Tanaka equation and the statement in title
%reads as follows
\begin{thm}\label{thm:1}
 Suppose that there exists a function $f$ of bounded variation such that for every real numbers $x$, $y$
 $$|\sigma(x)- \sigma(y)|^2 \leq  |f(x)-f(y)|\;\;\;(\mbox{2BV})$$
 if $\lambda\neq0$, then the solution of \eqref{eq:XU} is pathwise unique.

\end{thm}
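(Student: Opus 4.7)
My plan is to follow the local-time technique of Perkins and Le Gall, with the $\lambda\,dB^{(2)}$ term supplying the ellipticity that makes the argument work even though $\sigma$ need not be bounded away from zero. Let $X$ and $Y$ be two solutions of \eqref{eq:XU} on the same stochastic basis, driven by the same $B=(B^{(1)},B^{(2)})$ with $X_0=Y_0$, and set $V_t:=X_t-Y_t$. The $\lambda\,dB^{(2)}$ contributions cancel, so $V$ is a continuous local martingale with $V_0=0$ and, by the (2BV) condition,
\[
 d\langle V\rangle_t \;=\; (\sigma(X_t)-\sigma(Y_t))^2\,dt \;\leq\; |f(X_t)-f(Y_t)|\,dt.
\]
Tanaka's formula gives $|V_t|-L_t^0(V)=\int_0^t \mathrm{sign}(V_s)\,dV_s$, a local martingale. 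Hence, if I can show $L_t^0(V)=0$ a.s.\ for every $t$, then $|V|$ becomes a nonnegative continuous local martingale starting at $0$, which must be identically zero, and pathwise uniqueness follows. The whole task therefore reduces to proving $L_t^0(V)\equiv 0$.

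For this I use the occupation-time formula for $V$: for every nonnegative Borel $g$,
\[
 \int_{\mathbb{R}} g(a)\,L_t^a(V)\,da \;=\; \int_0^t g(V_s)\,d\langle V\rangle_s \;\leq\; \int_0^t g(V_s)\,|f(X_s)-f(Y_s)|\,ds.
\]
If the right-hand side is a.s.\ finite for some weight $g$ with $\int_{0^+}g(a)\,da=+\infty$, then the c\`adl\`ag property of $a\mapsto L_t^a(V)$ forces $L_t^{0+}(V)=0$, and the symmetric estimate yields $L_t^{0-}(V)=0$, hence $L_t^0(V)=0$. A natural candidate is $g(v)=1/|v|$.

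The decisive use of $\lambda\neq 0$ enters through the midpoint $M_t:=\tfrac{1}{2}(X_t+Y_t)$, which satisfies $d\langle M\rangle_t\geq \lambda^2\,dt$; thus $M$ is strictly elliptic, possesses a continuous semimartingale local time $L_t^a(M)$ (compactly supported in $a$ after localizing $X$ and $Y$), and $dt\leq \lambda^{-2}\,d\langle M\rangle_t$. Writing $X_s=M_s+V_s/2$ and $Y_s=M_s-V_s/2$, the BV representation
\[
 |f(X_s)-f(Y_s)| \;\leq\; \int_{\mathbb{R}} \mathbf{1}_{\{|M_s-u|\leq |V_s|/2\}}\,|df|(du)
\]
and Fubini rewrite the main estimate as
\[
 \int_0^t \frac{d\langle V\rangle_s}{|V_s|} \;\leq\; \int_{\mathbb{R}} |df|(du) \int_0^t \frac{\mathbf{1}_{\{|M_s-u|\leq |V_s|/2\}}}{|V_s|}\,ds,
\]
and the occupation-time formula for $M$ converts the inner time integral into a spatial integral weighted by $L_t^a(M)$.

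The main technical obstacle is controlling the singular integrand $|V_s|^{-1}\mathbf{1}_{\{|M_s-u|\leq |V_s|/2\}}$ uniformly in $u$: the naive pointwise bound $|V_s|^{-1}\leq(2|M_s-u|)^{-1}$ on the indicator set produces an integral of the form $\int L_t^a(M)/|a-u|\,da$ with a non-integrable singularity at $a=u$. Overcoming this — either via a truncation on $\{|V_s|>\varepsilon\}$ followed by a carefully monitored passage to the limit, or by choosing a refined weight $g$ calibrated to the measure $|df|$ so that the smallness of $L_t^a(M)$ near $a=u$ absorbs the singularity — is the crux of the proof. Once finiteness of the left-hand side is established, the implication $L_t^0(V)=0 \Rightarrow |V|\equiv 0$ described above closes the argument and yields pathwise uniqueness.
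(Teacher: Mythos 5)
Your overall framework is the right one and matches the paper's: reduce pathwise uniqueness to $L^0_t(V)\equiv 0$ for $V=X-Y$ (so that $|V|$ is a nonnegative local martingale started at $0$), and try to establish this by showing $\int_{0^+}L_t^a(V)/a\,da<\infty$ via the occupation-time formula and the (2BV) bound, with the $\lambda\,dB^{(2)}$ term providing the needed ellipticity. But the proof is not complete: you yourself flag that the ``crux'' --- controlling the singular integrand $|V_s|^{-1}\mathbf{1}_{\{|M_s-u|\leq|V_s|/2\}}$ --- is unresolved, and the two fixes you gesture at (truncation on $\{|V_s|>\varepsilon\}$, or a recalibrated weight $g$) are not carried out and do not obviously work: the bound $|V_s|^{-1}\leq (2|M_s-u|)^{-1}$ genuinely produces the divergent kernel $\int L_t^a(M)|a-u|^{-1}da$, and nothing in your setup makes $L_t^a(M)$ vanish fast enough near $a=u$ uniformly in $u$. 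So there is a genuine gap at the decisive step.

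The missing idea, which is how the paper closes exactly this gap, is to average over \emph{all} convex combinations rather than using only the midpoint. Mollify $f$ into smooth increasing $f_n$ (after localizing so that $f$ is bounded and compactly supported) and write the difference quotient as
\begin{equation*}
\frac{f_n(X_s)-f_n(Y_s)}{X_s-Y_s}\;=\;\int_0^1 f_n'\bigl(Z_s^\alpha\bigr)\,d\alpha,
\qquad Z^\alpha:=\alpha X+(1-\alpha)Y .
\end{equation*}
This identity eliminates the factor $1/|V_s|$ entirely --- there is no singular kernel left to control. Since each $Z^\alpha$ carries the full $\lambda\,dB^{(2)}$ component, $d\langle Z^\alpha\rangle_s\geq\lambda^2\,ds$, so by Fubini and the occupation-time formula applied to $Z^\alpha$ (not to $V$),
\begin{equation*}
\mathbb{E}\Bigl[\int_0^t f_n'(Z_s^\alpha)\,ds\Bigr]\;\leq\;\lambda^{-2}\,\mathbb{E}\Bigl[\int_{\mathbb{R}}f_n'(a)\,L_t^a(Z^\alpha)\,da\Bigr]
\;\leq\;\lambda^{-2}\,\Bigl(\sup_{\alpha,a}\mathbb{E}\bigl[L_t^a(Z^\alpha)\bigr]\Bigr)\int_{\mathbb{R}}f_n'(a)\,da,
\end{equation*}
where $\int f_n'\leq \|f\|_\infty$ because $f$ has bounded variation, and $\sup_{\alpha,a}\mathbb{E}[L_t^a(Z^\alpha)]<\infty$ follows from Tanaka's formula and standard moment estimates. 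Passing to the limit in $n$ by Fatou (using that the set $D$ of discontinuities of $f$ is countable and is not charged by the occupation measure of $X$ or $Y$, again by ellipticity) gives the required finiteness of $\mathbb{E}\bigl[\int_{0^+}L_t^a(V)a^{-1}da\bigr]$, hence $L_t^{0}(V)=0$ by right-continuity of $a\mapsto L_t^a(V)$. Your midpoint process is exactly $Z^{1/2}$; the reason your version stalls is that fixing $\alpha=1/2$ forces you to trade $|V_s|^{-1}$ for a spatial singularity, whereas integrating over $\alpha\in[0,1]$ dissolves that singularity before any occupation-time formula is invoked.
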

Inspired by Prokaj \cite{prokaj} and \cite{kara}, we prove a more general statement than Theorem \ref{thm:1}.
Let  $M,N$ be tow local martingales, we say that $M$ and $N$ are strongly orthogonal martingale if $\langle M,N\rangle=0$ i.e. whose product is a local martingale.\\ We say
that {\em $N$ dominates $M$} if for some constant $c>0$ we have $\langle M\rangle\leq
c\langle N\rangle$. In other words there is a process $Q$ (it can be chosen to be
predictable) such that $\langle M\rangle_t=\int_0^t Q_s d\langle N\rangle_s$ for all $t\geq 0$
and $\mathbb{P}(\forall s\geq 0,\,0\leq Q_s\leq c)=1$. A localized version of this
notion,  namely {\em $N$ locally dominates $M$}, holds if this $Q$ is locally
bounded.

\begin{thm}\label{thm:2}
  Let  $M,N$ be continuous local martingales in $(\F_t)\tgo$. Assume that $M$
  and $N$ are strongly orthogonal %uncorrelated
  and $N$ dominates $M$.
  Suppose that there exists a function $f$ of bounded variation such that for every real numbers $x$, $y$
 $$|\sigma(x)- \sigma(y)|^2 \leq  |f(x)-f(y)|,\;\;\;(\mbox{2BV})$$
  then the solution of the
  equation
  \begin{equation}
    \label{eq:XMN}
    dX_t=\sigma(X_t)dM_t+dN_t
  \end{equation}
  is pathwise unique.
\end{thm}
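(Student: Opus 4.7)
The plan is to analyze the difference of two solutions via Tanaka's formula and to control its local time at zero using the (2BV) structure together with the orthogonal-noise hypothesis. Let $X$ and $X'$ be two solutions of \eqref{eq:XMN} with $X_0 = X'_0$ driven by the same $(M, N)$, and set $Z := X - X'$. Then $Z$ is a continuous local martingale with $Z_0 = 0$ and
$$d\langle Z\rangle_s = (\sigma(X_s) - \sigma(X'_s))^2 d\langle M\rangle_s \leq |f(X_s) - f(X'_s)|\, d\langle M\rangle_s$$
by (2BV). Tanaka's formula yields $|Z_t| = \int_0^t \mathrm{sgn}(Z_s)\, dZ_s + L_t^0(Z)$, and after standard localization $\mathbb{E}|Z_t| \leq \mathbb{E} L_t^0(Z)$, so the problem reduces to showing $L_t^0(Z) \equiv 0$ a.s.

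I would attack this via the occupation-density characterization
$$L_t^0(Z) = \lim_{\epsilon \downarrow 0} \frac{1}{2\epsilon} \int_0^t \mathbf{1}_{\{|Z_s| \leq \epsilon\}}\, d\langle Z\rangle_s.$$
Decomposing $f = f_+ - f_-$ into monotone right-continuous pieces with total variation measure $|\mu|$, one has $|f(x) - f(y)| \leq |\mu|([x \wedge y, x \vee y])$. Combining with (2BV) and Fubini, and using that on $\{|Z_s| \leq \epsilon\}$ the condition $a \in [X_s \wedge X'_s, X_s \vee X'_s]$ forces both $X_s$ and $X'_s$ into $[a - \epsilon, a + \epsilon]$, one obtains the bound
$$\frac{1}{2\epsilon}\int_0^t \mathbf{1}_{\{|Z_s| \leq \epsilon\}}\, d\langle Z\rangle_s \leq \frac{1}{2\epsilon}\int |\mu|(da) \int_0^t \mathbf{1}_{\{|X_s - a| \leq \epsilon,\, |X'_s - a| \leq \epsilon\}}\, d\langle M\rangle_s.$$

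The next step is to convert $d\langle M\rangle_s$ into $d\langle X\rangle_s$ or $d\langle X'\rangle_s$, which is where the domination enters: from $d\langle X\rangle_s = \sigma(X_s)^2 d\langle M\rangle_s + d\langle N\rangle_s \geq d\langle N\rangle_s \geq c^{-1} d\langle M\rangle_s$ we obtain $d\langle M\rangle_s \leq c\, d\langle X\rangle_s$ and similarly for $X'$. This turns the inner integral into a quantity controlled by the joint occupation of $(X, X')$ near the diagonal at level $a$. The hard part will be to show that this joint occupation is genuinely of order $o(\epsilon)$ (for $|\mu|$-a.e. $a$), even though the individual occupations $\int_0^t \mathbf{1}_{\{|X_s - a| \leq \epsilon\}}\, d\langle X\rangle_s = \int_{a-\epsilon}^{a+\epsilon} L_t^b(X)\,db$ are only $O(\epsilon)$. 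This is the Perkins--LeGall local-time density step: morally, the orthogonal increment $dN_s$ is common to $X$ and $X'$ (hence invisible to $Z$) while its presence in $\langle X\rangle$ and $\langle X'\rangle$ prevents either process from hugging a level, so the pair spreads transversally in $(X, X')$-space, which should give the required gain of a factor $\epsilon$.

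Passing to the limit by dominated convergence against $|\mu|$, after a preliminary localization keeping $X, X'$ in a compact set so that $|\mu|$ is finite, then yields $L_t^0(Z) = 0$, whence $Z \equiv 0$ and pathwise uniqueness. Theorem \ref{thm:1} is the instance $(M, N) = (B^{(1)}, \lambda B^{(2)})$, where the domination is trivial (with $c = 1$) and the joint occupation estimate reduces to a direct computation with independent Brownian motions; Theorem \ref{thm:2} is obtained by observing that strong orthogonality and domination of $M$ by $N$ suffice to rerun the same calculation in the general case.
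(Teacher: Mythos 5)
Your reduction to showing $L^0_t(X-X')=0$ via Tanaka's formula, and your use of (2BV) together with the decomposition $|f(x)-f(y)|\le |\mu|([x\wedge y, x\vee y])$ and Fubini, are all in the spirit of the paper. But the argument has a genuine gap exactly where you flag "the hard part": you need the joint occupation $\int_0^t \mathbf{1}_{\{|X_s-a|\le\epsilon,\,|X'_s-a|\le\epsilon\}}\,d\langle M\rangle_s$ to be $o(\epsilon)$ for $|\mu|$-a.e.\ $a$, and you offer only the heuristic that the orthogonal noise $N$ makes the pair "spread transversally." Nothing in your write-up establishes this; the individual occupation bound only gives $O(\epsilon)$, so after dividing by $2\epsilon$ and integrating against $|\mu|$ you are left with an $O(1)$ quantity, not $0$. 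Worse, the set $\{X_s=X'_s\}$ (which is precisely where $L^0(Z)$ accumulates) sits inside your joint occupation set, so extracting the extra factor of $\epsilon$ there would essentially require a two-dimensional occupation-density estimate for the pair $(X,X')$ near the diagonal --- a statement that is neither proved nor obviously true at this level of generality, and whose proof would be the whole content of the theorem.

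The paper sidesteps this entirely with Le Gall's criterion: by right-continuity of $a\mapsto L_t^a(Z)$ it suffices to show $\int_{0^+}^{\infty} a^{-1} L_t^a(Z)\,da<\infty$, and the occupation-times formula rewrites this as $\int_0^t (X_s-Y_s)^{-1}(\sigma(X_s)-\sigma(Y_s))^2\mathbf{1}_{\{X_s-Y_s>0\}}\,d\langle M\rangle_s$, which only sees times where $X_s>Y_s$ strictly. After (2BV), localization, and mollifying $f$ into smooth increasing $f_n$, the difference quotient is written as $\int_0^1 f_n'(Z^\alpha_s)\,d\alpha$ with $Z^\alpha=\alpha X+(1-\alpha)Y$; the key point is that strong orthogonality gives $\langle Z^\alpha\rangle\ge\langle N\rangle$, hence domination gives $d\langle M\rangle\le c\,d\langle Z^\alpha\rangle$, and one more application of the occupation-times formula (now for $Z^\alpha$) bounds everything by $c\,\sup_{\alpha,a}\mathbb{E}[L_t^a(Z^\alpha)]\cdot\int f_n'(a)\,da\le C\|f\|_\infty$. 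That interpolation-plus-domination step is the actual mechanism by which the orthogonal noise is exploited, and it is the piece missing from your proposal. To complete your proof you would either have to prove the transversality estimate directly (hard) or switch to the $\int_{0^+} a^{-1}L_t^a\,da$ criterion as above.
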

\begin{proof}
Without loss of generality, we shall prove the statement for an increasing function $f$.
Let us first show that $L_{.}^{0}(X-Y)\equiv0$, whenever $X$ and $Y$ denote any two solutions of the SDE (\ref{eq:XMN}) with
the same underlying local martingales $M$ and $N$.
By the right continuity of $L_{.}^{0}$ it is enough to prove that, for any $t\geq 0$,
\begin{equation*}
\int_{0^+}^{+\infty }\frac{L_{t}^{a}\left( X-Y\right)}{a}da<+\infty.
\end{equation*}
Indeed, using the density occupation formula we can write for any $\delta >0$,
\begin{equation*}
\int_{0^+}^{+\infty }\frac{L_{t}^{a}(X-Y)}{a}da=\int_{0}^{t}\frac{d\langle
X-Y\rangle_{s}}{X_{s}-Y_{s}}1_{\left\{ X_{s}-Y_{s}>0 \right\}
}=\int_{0}^{t}\frac{\left( \sigma (X_{s})-\sigma (Y_{s})\right)
^{2}}{%
X_{s}-Y_{s}}1_{\left\{ X_{s}-Y_{s}>0 \right\} }d\langle M\rangle_s.
\end{equation*}
Applying the assumption (BV2) we obtain
\begin{equation*}
\int_{0}^{t}\frac{\left( \sigma (X_{s})-\sigma (Y_{s})\right)
^{2}}{%
X_{s}-Y_{s}}1_{\left\{ X_{s}-Y_{s}>0\right\} }d\langle M\rangle_s\leq
\int_{0}^{t}\frac{%
|f(X_{s})-f(Y_{s})|}{X_{s}-Y_{s}}1_{\left\{ X_{s}-Y_{s}>0
\right\} }d\langle M\rangle_s.
\end{equation*}
As a consequence,
\begin{eqnarray}
% \nonumber to remove numbering (before each equation)
 \E\left[\int_{0^+ }^{+\infty }\frac{L_{t}^{a}(X-Y)}{a}da\right]\leq& \E\left[\int_{0}^{t}\frac{
|f(X_{s})-f(Y_{s})|}{X_{s}-Y_{s}}1_{\left\{ X_{s}-Y_{s}>0
\right\} }d\langle M\rangle_s\right].
\end{eqnarray}
Now, by a localization argument $\|f\|_{\infty}:=sup_{x}|f(x)|< \infty$.\\
Let $\theta _{n}$ denote the standard positive regularizing mollifiers sequence, and define
$$f_{n}(x)=(\widetilde{f}(.)*\theta _{n})(x)\;\;\;\;\;\;\mbox{for}\;\; x\in \mathbb{R},\;\;\;n \in \mathbb{N}^*,$$
where $\widetilde{f}$ is any real function such that $\widetilde{f}(x)=f(x)$ if $|x|\leq M$ and $0$ \\if $|x| \geq M+1$.
\\
Note that $f_{n}$ are increasing functions, with support contained in\\ $[-M-1,M+1]$ such that
$$\sup_m\sup_x|f_n(x)|\leq \|f\|_{\infty}\;\;\;\;\;\; \mbox{and};\;\;\;\;f_{n}(x)\rightarrow f(x)\;\;\; \mbox{for every}\;\; x \in D^c,\;\; |x|\leq M$$
where $D$ is the denumerable set of discontinuous points of the function $f$.
If we denote $Z_t^\alpha =\alpha X_t+(1-\alpha)Y_t $
 ,the orthogonality of $M$ and $N$ implies that $\langle N\rangle(.)\leq  \langle Z^\alpha\rangle(.)$, Let us recall the domination assumption which gives
\begin{eqnarray}\label{domination}
\langle M\rangle(.) \leq c \langle N\rangle(.) \leq c\langle Z^\alpha\rangle(.).
\end{eqnarray}
Hence, using successively Fatou's Lemma, the intermediate value theorem, and the domination assumption (\ref{domination}) we get,
 %and \[\frac{d}{dt}\langle \alpha Z^\alpha \rangle _t=(\alpha \sigma
%(t,X_t)+(1-\alpha)\sigma (t,Y_t))^2 \geq \epsilon^2\] we obtain
\begin{align*}
% \nonumber to remove numbering (before each equation)
& \E\left[\int_{0}^{t}\dfrac{\left( f(X_{s})-f(Y_{s})\right)
}{X_{s}-Y_{s}} 1_{\left\{ X_{s}-Y_{s}>0 \right\} }d\langle M\rangle_s\right]\\
&\leq \liminf_{n\rightarrow +\infty }\E\left[\int_0^t\dfrac{\left(
f_{n}(X_{s})-f_{n}(Y_{s})\right) }{X_{s}-Y_{s}}1_{\left\{
X_{s}-Y_{s}>0 \right\} }d\langle M\rangle_s\right]\\
&= \liminf_{n\rightarrow +\infty
}\E\left[\int_{0}^{t}\int_{0}^{1}\dfrac{%
\partial f_{n}}{\partial a}(\alpha X_{s}+(1-\alpha )Y_{s})d\alpha d\langle M \rangle_s \right]\\
&= \liminf_{n\rightarrow +\infty
}\int_{0}^{1}d\alpha\E\left[\int_{0}^{t}\dfrac{
\partial f_{n}}{\partial a}(Z_s^\alpha)d\langle M\rangle_s\right]\\
& \leq c\liminf_{n\rightarrow +\infty
}\int_{0}^{1}d\alpha\E\left[\int_{0}^{t}\dfrac{
\partial f_{n}}{\partial a}(Z_s^\alpha)d\langle Z^\alpha\rangle_s\right]
\end{align*}
Note that we have used in the first inequality the fact that
\begin{eqnarray}\label{statement}
% \nonumber to remove numbering (before each equation)
 \int_{0}^{t}P[ (X_{s}\in D)\cup (Y_{s}\in D)]d \langle M \rangle_s &=& 0.
\end{eqnarray}
To see that this statement holds, it suffices to remark that

\begin{eqnarray*}
\int_{0}^t 1_{\{X_s=a\}}d \langle M \rangle_s \leq c \int_{0}^t 1_{\{X_s=a\}}d \langle N \rangle_s\leq \int_{0}^t 1_{\{X_s=a\}}d \langle X \rangle_s=0\;\;\;\; \forall a \in \mathbb{R}
\end{eqnarray*}
The first inequality due to the domination.
The last equality is a consequence of occupation times formula. Hence,
\begin{eqnarray}
% \nonumber to remove numbering (before each equation)
 \E\left[\int_{0}^{+\infty }\frac{L_{t}^{a}(X-Y)}{a}da\right] &\leq& \liminf_{n\rightarrow +\infty}\E\left[\int_{\mathbb{R}} \int_{0}^{1}\dfrac{\partial f_{n}}{\partial
a}(a)L_{t}^{a}(Z^\alpha) d\alpha da\right].
\end{eqnarray}
However, since $\alpha \in (0,1)$, then standard
calculations imply that, for any $p\geq 0$ and $t\geq 0$,
\begin{equation}
\label{esti1} \E[\sup_{s\leq t}|X_s|^p]<\infty.
\end{equation}
 Using Tanaka formula and the inequality
$|Z_t^\alpha-a|-|Z_0^\alpha-a|\leq
|Z_t^\alpha-Z_0^\alpha|$  we deduce that
$$
\sup_{\alpha \in [0,1],a\in R}\E \left[L_{t}^{a} (Z^\alpha)\right]<\infty.$$ Therefore, we obtain
\begin{eqnarray*}
 \E\left[\int_{0^+ }^{+\infty }\frac{L_{t}^{a}(X-Y)}{a}da\right] &\leq& c \sup_{\alpha
\in [0,1],a\in R}\E\left[ L_{t}^{a}
(Z^\alpha)\right]\int_R \dfrac{\partial f_{n}%
}{\partial a}(t,a)da \\
& \leq & C\left\| f\right\|_{\infty }
\end{eqnarray*}
where \ $C>0$\ is a generic constant. hence $L_{.}^{0}(X-Y)\equiv0$, by Tanaka formula, we obtain that $|X_.-Y_.|$ is a local martingale, thus  also a nonnegative supermartingale, with $|X_0-Y_0|=0$, and consequently, $X$ and $Y$ are indistinguishable.
\end{proof}
\begin{rem}
In \cite{kara}, the authors  have showed theorem but under the following elementary comparison:
$$|\sigma(x)-\sigma(y)|^2 \leq  \|\sigma\|_{TV}|\sigma(x)-\sigma(y)| $$
where $  \|\sigma\|_{TV}$ is the total variation of $\sigma$. As we have seen previously, this comparison can be substituted by a weaker assumption ( 2 BV).
\end{rem}
\begin{rem}
We think that the power $2$ in the assumption (2 BV) is sharp as in \cite{Barlow2}.
\end{rem}
The equation (\ref{eq:XU}) with a suitably correlated Brownian motion with high variance can restore pathwise uniqueness. The following result can be seen also as a generalization of this equation
$$dX_t=\lambda dt+1_{\{X_t> 0\}}, dW_t+(\eta/2)dV_t,\;\;\;\;\;0 \leq  t < \infty$$
considered in \cite{kar}.
\begin{thm}
Let $A$ be a process of bounded variation, and $W$, $V$ are standard Brownian motions, the following equation
\begin{eqnarray}\label{stren}
dX_t=\sigma(X_s)dW_s+(\eta/2)dV_t+dA_t
\end{eqnarray}
has a pathwise unique solution, provided either
\begin{itemize}
\item[(i)]$\eta \neq [-1,1]$ and $\langle W,V\rangle_t=(-t/ \eta)$, $0 \leq  t < \infty$, or
\item[(i)]$\eta \neq 0$ and $W$ and $V$ are independent.
\end{itemize}
\end{thm}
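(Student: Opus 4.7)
Let $X$ and $X'$ be two solutions of (\ref{stren}) on a common stochastic basis, driven by the same $W$, $V$, $A$, with $X_0=X'_0$. Setting $Z:=X-X'$, both the $V$- and $A$-contributions cancel and $Z$ is a continuous local martingale with $Z_0=0$ and $dZ_t=(\sigma(X_t)-\sigma(X'_t))\,dW_t$. My aim is to prove that $L_t^0(Z)\equiv 0$; by Tanaka's formula this makes $|Z|$ a nonnegative local martingale starting at zero, and hence $X\equiv X'$.

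I would follow the template of the proof of Theorem~\ref{thm:2}. The density occupation formula together with right-continuity of $a\mapsto L_t^a(Z)$ reduces matters to the almost sure finiteness of
$$\int_{0^+}^{\infty}\frac{L_t^a(Z)}{a}\,da=\int_0^t\frac{(\sigma(X_s)-\sigma(X'_s))^2}{X_s-X'_s}\,1_{\{X_s>X'_s\}}\,ds.$$
Applying (2BV) bounds the integrand by $|f(X_s)-f(X'_s)|/(X_s-X'_s)$; after reducing by localization to $f$ bounded of bounded variation, one mollifies $f$ into an increasing sequence $f_n$ with $\sup_n\|f_n\|_\infty\leq\|f\|_\infty$, uses the fundamental theorem of calculus to rewrite the difference quotient as $\int_0^1 f_n'(Z_s^\alpha)\,d\alpha$ with $Z^\alpha:=\alpha X+(1-\alpha)X'$, and finally pulls the $ds$-integral into a local-time integral of $Z^\alpha$ via the occupation-times formula.

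The new ingredient, taking the place of the domination hypothesis in Theorem~\ref{thm:2}, is a direct bound on $\langle Z^\alpha\rangle$. With $u_t:=\alpha\sigma(X_t)+(1-\alpha)\sigma(X'_t)$,
$$d\langle Z^\alpha\rangle_t=u_t^2\,dt+\tfrac{\eta^2}{4}\,dt+\eta\,u_t\,d\langle W,V\rangle_t.$$
In case (i), $d\langle W,V\rangle_t=-dt/\eta$ gives $d\langle Z^\alpha\rangle_t=\bigl((u_t-\tfrac12)^2+\tfrac{\eta^2-1}{4}\bigr)\,dt\geq\tfrac{\eta^2-1}{4}\,dt$, which is strictly positive since $|\eta|>1$. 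In case (ii) the cross term is zero and $d\langle Z^\alpha\rangle_t\geq\tfrac{\eta^2}{4}\,dt$. In both cases there is $c_\eta>0$ with $ds\leq c_\eta^{-1}\,d\langle Z^\alpha\rangle_s$. The same computation applied to $X$ itself yields $d\langle X\rangle_t\geq c_\eta\,dt$, so by occupation times $\int_0^t 1_{\{X_s=a\}}\,ds=0$ for every fixed $a$ (and likewise for $X'$), which secures the analogue of identity (\ref{statement}) used in Theorem~\ref{thm:2}. The occupation-times formula then turns $\int_0^t f_n'(Z^\alpha_s)\,d\langle Z^\alpha\rangle_s$ into $\int_{\mathbb{R}}f_n'(a)\,L_t^a(Z^\alpha)\,da$, and taking expectation and $\liminf_n$ gives
$$\mathbb{E}\!\left[\int_{0^+}^{\infty}\frac{L_t^a(Z)}{a}\,da\right]\leq c_\eta^{-1}\,\|f\|_{TV}\,\sup_{a,\alpha}\mathbb{E}[L_t^a(Z^\alpha)],$$
which is finite by the Tanaka estimate used in the proof of Theorem~\ref{thm:2}. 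I expect the only real obstacle to be the two-case quadratic-variation computation above; once the non-degeneracy $d\langle Z^\alpha\rangle_t\geq c_\eta\,dt$ is in hand, the remainder of the argument is a faithful transcription of that proof.
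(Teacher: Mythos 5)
Your argument is correct, and it reaches the conclusion by a route that is equivalent to, but organized differently from, the paper's. The paper does not rerun the local-time estimate at all: it performs a change of driving martingales, setting $M:=W/2$ and $N:=(W+\eta V)/2$, rewrites (\ref{stren}) as $dX_t=(2\sigma(X_t)-1)\,dM_t+dN_t+dA_t$, checks that under (i) one has $\langle M,N\rangle=0$, $\langle M\rangle_t=t/4$ and $\langle N\rangle_t=(\eta^2-1)t/4$, and then simply invokes Theorem \ref{thm:2} (noting that $2\sigma-1$ still satisfies (2BV) with $4f$). Your completing-the-square identity $d\langle Z^\alpha\rangle_t=\bigl((u_t-\tfrac12)^2+\tfrac{\eta^2-1}{4}\bigr)dt$ is exactly the algebra hidden in that decomposition, since $\langle \tilde u M+N\rangle_t=\int_0^t\bigl((u_s-\tfrac12)^2+\tfrac{\eta^2-1}{4}\bigr)ds$ with $\tilde u=2u-1$; you have inlined the reduction rather than quoting it. What your version buys is twofold. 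First, it treats the two cases uniformly: the paper's displayed $M,N$ are orthogonal only under hypothesis (i) (if $W$ and $V$ are independent then $\langle W/2,(W+\eta V)/2\rangle_t=t/4\neq0$ and $\langle N\rangle_t=(1+\eta^2)t/4$), so case (ii) really requires a different choice, e.g.\ $M=W$, $N=(\eta/2)V$, which the paper leaves implicit; your direct bound $d\langle Z^\alpha\rangle_t\geq c_\eta\,dt$ covers both at once. Second, you make explicit the non-degeneracy $d\langle X\rangle_t\geq c_\eta\,dt$ needed to secure the analogue of (\ref{statement}), a point the reduction also delivers but silently. The only caveat, shared equally by the paper, is that Theorem \ref{thm:2} is stated without the $dA_t$ term, so either route must observe that $A$ cancels in $X-X'$ and that the moment/localization estimates (e.g.\ $\sup_{a,\alpha}\E[L_t^a(Z^\alpha)]<\infty$) survive after stopping at $\inf\{t:\int_0^t|dA_s|>n\}$; you may want to say this explicitly.
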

\begin{proof}
The equation (\ref{stren}) is equivalent to the following:
$$dX_t=(2\sigma(X_s)-1)dM_s+N_t+A_t$$
where the process $M:= W/2$, $N:=(W+\eta V)/2$ are continuous, orthogonal martingales with quadratic variation $<M>_t=t/4$ and
\\
 $<N>_t=(\eta^2-1)t/4$, respectively.

\end{proof}
\subsection{Some open problems}\label{Open}
Let us mention an open problems which we think are quite interesting.\\
\textbf{Problem1}:
\\

We consider the  following stochastic equation
\begin{eqnarray}\label{Le gall}
X_{t}=X_{0}+B_{t}+\int_{\mathbb{R}}L_{t}^{a}(X)\;\nu(da)
\end{eqnarray}
where $L_{t}^{a}(X)$ denotes the local time at $a$ for the time $t$ for the semimartingale $X$, $\nu$ is bounded measure on $\mathbb{R}$.
Following Le Gall \cite{Legall}, the solution of (\ref{Le gall})is obtained as the limit of a sequence of solutions satisfying:
$$X_{t}^{n}=X_{0}+B_{t}+\int_{0}^{t}b_{n}(X_{s}^{n})\;ds$$
when the measure $\mu^{n}(da)=b_{n}(a)\;da$ converges weakly to some measure $\mu$. The measure $\nu$ in (\ref{Le gall}) is obtained from the limit $f$ of $f_{\mu_{n}}(x)=\exp(-2 \int_{0}^{x}b_{n}(a)\;da)$ by this formula:
$$\nu(da)=\frac{f^{'}(da)}{f(a)+f(a^{-})},$$
with $f(a^{-})$ denotes the left limit of $f$ at a point $a$.\\
 What one can say for the non homogenous case?
 $$X_{t}^{n}=X_{0}+B_{t}+\int_{0}^{t}b_{n}(s,X_{s}^{n})\;ds$$
 which is equivalent to the sequence of this SDE:
 $$X_{t}^{n}=X_{0}+B_{t}+\int_{0}^{t}\int_{\mathbb{R}}b_{n}(s,a)\,d_{s}L_{s}^{a}(X^{n})\;da.$$
 Formally, this sequence will converge to :
 \begin{eqnarray}\label{general}
 X_{t}=X_{0}+B_{t}+\int_{0}^{t}\;d_{s}L_{s}^{\nu_{s}}(X)
 \end{eqnarray}
where  $L_{t}^{\nu_{s}}=\int_{\mathbb{R}}L_{s}^{a}(X)\;\nu_{s}(da)$.\\
The pathwise uniqueness of the solution to equation (\ref{general}) was studied by  S. Weinryb \cite{Weinryb} in the case $\nu_{s}(da)=\alpha(s)\delta_{0}(da) $ where $\alpha:\mathbb{R}^{+}\rightarrow \mathbb{R}$ is a deterministic function and $\alpha \leq 1/2$.\\
In the other hand, we have shown the pathwise uniqueness for (\ref{general}) when  the function $\alpha$ is constant. However, we don't know whether the pathwise uniqueness hold in the other cases.
\\
\textbf{Problem2}:
In \cite{buc}, the authors has studied the weak limit of the following equation:
$$dX_t^\epsilon= b(X_s^\epsilon)ds +\epsilon dB_t,$$
one can ask, what can be the limit of the following equation :
$$dX_t^\epsilon= \sigma(X_s^\epsilon)dM_t +\epsilon dN_t$$
where $M$ and $N$ are two local martingales which satisfy the same assumptions as in section \ref{Tanaka}.
\end{spacing}
\newpage


\begin{thebibliography}{1}
\bibitem{Yor} J. Azema, M. Yor. En guise d'introduction. Temps locaux, Astérisque n° $52-53$, soc. Math. France $1978$.
\bibitem{Barlow} M. T. Barlow et E. Perkins. SDE's singular increasing process, Stochastics, $12$ $(1984)$, $229-242$.
\bibitem{Barlow2} M.T. Barlow, One dimensional stochastic differential equations with no strong solution. J. London math. Soc.(2) 26, 335-347.
\bibitem{belfa} R. Belfadli et Y. Ouknine. Unicité trajectorielle des éuations différentielles stochastiques avec temps local et temps de séjour au bord. Afrika matematika AM 21(1) 2011.
\bibitem{benabdallah} M. Benabdallah, S. Bouhadou and Y. Ouknine. On the pathwise uniqueness of solutions of one-dimensional stochastic differential equations with jumps. Preprint.
\bibitem{buc} R. Buckdahn, Y. Ouknine and M. Quincampoix, (2009), on limiting values of stochastic differential equations with small noise intensity tending to zero, Bull, Sci, Math 133: 229-237.
\bibitem{coquet} F. Coquet, Y. Ouknine, "Some identities on semimartingales local times." Statist. and Probab. Letters $49$, $149-153$.
\bibitem{ENgelbert-Yamada} H. J. Engelbert. On the theorem of T. Yamada and S. Watanabe. Stochastics Rep. 36, 3-4, 205-216 (1991). MR1128494.
 \bibitem{Schmidt} H. J. Engelbert and W. Schmidt, One-dimensional stochastic differential equations with generalized drift. Stochastic differential systems (Marseille-Luminy, 1984), 143-155, Lecture notes in
     control and information sciences 69, Springer-verlag, Berlin 1985.
\bibitem{kara} E. R. Fernholz, T. Ichiba, I. Karatzas and V. Prokaj. Planar diffusions with rank-base characteristics and perturbed Tanaka equation. (2011)
  \bibitem{Harisson}J. M. Harisson and L. A. Shepp. On skew Brownian motion, Ann. probab. 9, 309-313 (1981).
   \bibitem{kar} I. Karatzas, A, N. Shiryaev and M. Shkolnikov. On the one-sided Tanaka equation with drift; arxiv.
 \bibitem{zvonkin} N.V. Krylov, A.K. Zvonkin. On strong solutions of stochastic differential equations.  Selecta Mathematica Sovietica, 1981, vol. 1, no. 1, 1961.
\bibitem{legall2}J. F. LeGall.Applications des temps locaux aux équations différentieles stochastiques unidimensionelles. Lectures notes in Mathematics $986$. $15-31$ Springer-Verlag. New York (1983)
\bibitem{Legall}J. F. LeGall. One-dimensional stochastic differential equations involving local times of unknown process. Stochastic analysis and application (Swansea 1983) 51-82. Lecture notes in mathematics 1095, Springer-verlag, Berlin 1984.
\bibitem{Shiga} S. Manabe and  T. Shiga, $(1973)$. On one-dimensional stochastic differential equations
with non-sticky boundary condition. J. Math. Kyoto Univ. $13$ $595-603$.
\bibitem{Nakao} S. Nakao $(1972)$. On the pathwise uniqueness of solutions of one-dimensional
stochastic differential equations. Osaka J. Math. $9$ $513-518$.
\bibitem{Nasyrov}S. F. Nasyrov, On local times for functions and stochastic processes: II. Theory Probab.
Appl. 41 (1996), $275-287$.
\bibitem{Sup} Y. Ouknine, Temps Local du Produit et du Sup de deux semimartingales, Séminaire de
probabilités de Strasbourg, Volume $24$, pp. $477-479$ $(1989)$
\bibitem{Ouk Nakao} Y. Ouknine, Généralisation d'un lemme de S. Nakao et applications, Stochastics $23$ $(1988)$ $149-158$.
\bibitem{OUrea} Y. Ouknine, Sur la réalisation des solutions déquations différentielles stochastiques. Afr. Mat., Sér. II  $71-76$ $(1993)$.
\bibitem{Ident} Y. Ouknine, Quelques identités sur les temps locaux et unicité des solutions d{'}équations
différentielles stochastiques avec reflection, Stochastic Processes and their Applications, Volume
$48$, Issue $2$, pp. $335-340$ $(1993)$
\bibitem{Rutkowski} Y. Ouknine et M. Rutkowski, Local times of functions of continuous Semimartingales,
Stochastic Anal. Appl. $13$, no. $2$, pp. $211-231$ $(1995)$
\bibitem {Ouknine} Y. Ouknine : Unicité trajectorielle	des equations différentielles stochastiques avec temps local, Probab. and Math. Stat., \textbf{19}, Fasc. 1 (1999), 55-62.
\bibitem{perkins} Local time and pathwise uniqueness for stochastic differential equations. Lecture notes in Mathematics $920$. 201-208. Spriner-Verlag, New york. (1982).
\bibitem{prokaj} V. Prokaj, The solution of the perturbed Tanaka equation is pathwise unique, Preprint.(2010)
\bibitem {Protter} Protter, P.: Stochastic integration and differential equations. 2nd ed. Springer 2005.
\bibitem{Revuz} D. Revuz and  M. Yor, $(1999)$. Continuous Martingales and Brownian Motion.
Springer, Berlin.
\bibitem{M.Rutkowski} M. Rutkowski, Strong solutions of stochastic differenetial equations involving local times. Stochastics an international journal of
probability and stochastic processes, 22:3. 201-218.
\bibitem{yamada} T.Yamada and S. Watanabe. On the uniqueness of solutions of stochastic differential equations. Kyoto
\bibitem{Weinryb} S. Weinryb, Etude dune équation différentielle stochastique avec temps local. In Séminaire
de Probabilités XVII. Lecture Notes in Math., Volume $986$, Springer-Verlag, pp. $72-77$ $(1982)$

%Throughout we let the letter c with or without subscripts denote constants
%whose exact value is unimportant and may change from line to line
%It follows from the above and that
% on the basis of this theory the solution of a diffential inclusion is obtained as the limit of a seuence of solutions of differential equations  in which the ... is replaced by ...
\end{thebibliography}
\end{document}